\journal{jocn} 
\title{\centering 
Controllability of a system of non-autonomous degenerate coupled parabolic equations}
\author[1,3,*]{\centering \textcolor{black}{Alfredo S. Gamboa}}
\author[1]{\textcolor{black}{Juan Limaco }}
\author[1,2]{\textcolor{black}{Luis P. Yapu}}
\affil[1]{Universidade Federal Fluminense, Instituto de Matemática e Estatística, Niterói, Brazil}
\affil[2]{Friedrich-Alexander Universität Erlangen-Nürnberg (FAU), Chair of Dynamics, Control, Machine Learning and Numerics, Erlangen, Germany}
\affil[3]{Universidade Estadual do Rio de Janeiro, Escola Politécnica, Nova Friburgo, Brazil}
\affil[*]{\centering Contact: alfredo.soliz@iprj.uerj.br}
\begin{abstract}
%``No hay rama de la matemática, por mas abstrata que sea , que no pueda aplicarse algún dia a los fenômenos del mundo real''\\
%.\hfill Lobachevski
%\end{abstract}
\begin{abstract}
We prove a Carleman estimate for a one-dimensional parabolic equation which degenerates at one extremity of the domain and has a bounded, time dependent coefficient multiplying the diffusion term. Then we use the estimate to show the null controllability of a coupled system characterized by this form of diffusion operator and bounded coefficients.

%In this paper we use a Stackelberg-Nash strategy to show the local null controllability of a semilinear parabolic equation in one-dimension defined in a non-cylindrical domain where the diffusion coefficient degenerates at one point of the boundary. The linearized degenerated system is treated adapting Carleman estimates for degenerated systems from Demarque, Límaco and Viana \cite{DemarqueLimacoViana2020} to the case of bounded time dependent coefficients and the local controllability of the semi-linear system is obtained using Liusternik's inverse function theorem. 
%\vspace{0.5cm}
%\\
%``No hay rama de la matemática, por mas abstrata que sea , que no pueda aplicarse algún dia a los fenômenos del mundo real''\\
%.\hfill Lobachevski
\end{abstract}
\definecolor{mygreen}{RGB}{44,162,67}
\definecolor{mylilas}{RGB}{186,85,211}
\newcommand{\cara}{\mathbb{1}}
\newtheorem{lema}{Lemma}
\newtheorem{teo}{Theorem}
\newtheorem{propo}{Proposition}
\newtheorem{coro}{Corollary}
\newcommand{\R}{\mathbb{R}}
\begin{document}

\maketitle

\textbf{MSC Classification (2020)}: Primary: 35K65, 93B05; Secondary: 93C10. 

\textbf{keywords}: Degenerate parabolic equations, Non-autonomous systems, Controllability, Carleman inequalities.

\section{Introduction} \label{S:Intro}

%The problem we are considering is the following: 
Let $T>0$ be a fixed constant, $\Omega=(0,1)$ the spatial domain and $Q=\Omega \times (0,T)$ with lateral boundary $\Sigma = \partial \Omega \times (0,T)$.
We are interested in the following degenerate non-autonomous parabolic system of equations with Dirichlet boundary conditons,
\begin{equation}\label{eq:PDE}
	\left\{\begin{aligned}
		&u_t - b(t) \left(a(x) u_{x}\right)_{x} + d_1(x,t)\sqrt{a}u_x + b_{11}u + b_{12}v = h 1_{\omega} + H_1 &&\text{in} && Q,\\
        &v_t - b(t) \left(a(x) v_{x}\right)_{x} + d_2(x,t)\sqrt{a}v_x + b_{21}u + b_{22}v = H_2 &&\text{in}&& Q,\\
		&u(0,t)=u(1,t)=v(0,t)=v(1,t)=0&&\text{on} && (0,T), \\
		&u(\cdot,0) = u_0, \quad v(\cdot,0) = v_0 &&\text{in} && \Omega,
	\end{aligned}
	\right.
\end{equation}
where $u_{0}$, $v_0$ are the initial data, $h$ is the control, $\omega \subset \Omega$ is an open interval where the control acts
%, $\omega_T = \omega \times (0,T)$ 
and $1_A$ denotes the characteristic function of the set $A \times (0,T)$. The coefficients $d_i(x,t)$ and $b_{ij}(x,t)$, $i=1,2$, are bounded, and the time dependent function $b(t) \in W^{1,\infty}(0,T)$ is bounded from bellow by a constant $b_0 >0$. Moreover let us assume that there exists $\omega_1 \subset\subset \omega$ such that $\inf \{ b_{21}(x,t) \ : \ (x,t) \in \omega_1 \times [0,T] \}>0$. 
%$F$ is a $C^2$-function with bounded derivatives up to order 2.
%Let $\hat O_t$ and $\hat O_{i,t}$ denote open intervals included in $\hat \Omega_t$ and $\hat O = \cup_t \hat O_t$, $\hat O_{i} = \cup_t \hat O_{i,t}$ such that $\hat O_i \cap \hat O = \emptyset$.
%For $i=1,2$, we denote $\mathcal{V}_{i}=L^{2}(\hat O_{i})$ and 
%We use the notation $\mathcal{V}=L^{2}(\omega_T)$.

%We take degeneration functions $a(x')$ such that $a(\tau_t(x))$ has the form $h(t)a(x)$ with $x \in (0,1)$. In Section \ref{sec:diffeomorphism} the diffeomorphism $\tau_t$ has a specific form which verify this condition. 
%In this case, 
Moreover $a \in C([0,1]) \cap C^1((0,1])$ satisfies $a(0) = 0, a > 0 \text{ in } (0,1], a' \ge 0$ and  $xa'(x)\leq K a(x), \forall x \in [0,1] \text{ for some } K \in [0,1).$ In other words, the function $a$ behaves like  $x^\alpha, \text{ with } \alpha \in (0,1)$. This condition is called  \emph{weakly degenerate} in \cite{Alabau_cannarsa_fragnelli-06}.
Under these hypotheses, the function $\frac{x}{\sqrt{a}}$ is nondecreasing and thus is bounded from above by $\frac{1}{\sqrt{a(1)}}$.

For any $T>0$ we look for a control $h \in L^2(\omega \times (0,T))$ such that the system is \emph{null controllable} at time $T$, i.e. 
$$
u(x,T) = 0, \text{ for } x \in \Omega.
$$

As far as we know, the pioneering works on controllability studying the function spaces and the Carleman estimates for degenerate parabolic autonomous equations were \cite{Alabau_cannarsa_fragnelli-06} and \cite{Cannarsa_Martinez_Vancostenoble-2005}. The controllability of a degenerate coupled autonomous system has been studied in \cite{AitBenHassi-11, AitBenHassi-13}. 

Recently Akil, Fragnelli and Ismail \cite{fragnelli_no_autonomo} proved Carleman estimates for non-autonomous adjoint problems and proved null-controllability of the associated non-autonomous degenerate parabolic equations in divergent and non-divergent form. We observe that the weights in \cite{fragnelli_no_autonomo} are differents and that in our estimate (\ref{ecjv34_intro}) we do not have a factor $\frac{x^2}{a}$ in the second term of the left-hand side as is the case of the estimate in \cite{fragnelli_no_autonomo}. Moreover, our estimate (\ref{ecjv34_intro}) depends on two constants $s$ and $\lambda$ which can be chosen big enough independently as in the more standard Carleman estimates in the non-degenerate case, c.f. \cite{Fur_Ima-96, GlobalCarleman-06}, which gives more freedom for future applications.
The computations we follow are based on the work of Demarque, Límaco and Viana \cite{DemarqueLimacoViana_deg_eq2018} for an autonomous equation.
The case of Carleman estimate for a system of coupled degenerate autonomous parabolic equations with the weights we are using has been worked in the subsequent work  \cite{DemarqueLimacoViana_deg_sys2020}.

Some applications of degenerate parabolic equations appear in climate sciences \cite{FLORIDIA-2014, fragnelli_no_autonomo}. These papers treat the Budiko-Sellers model which models the surface temperature averaged over longitude and with degeneration function depending on the latitude on Earth and with degeneration at the poles. Other recent result related to the Budiko-Sellers model but with Robin boundary conditions is done in \cite{fragnelli_robin_BC-25} by the same authors of \cite{fragnelli_no_autonomo}. Another work in the degenerate case is related to the controllability of the Prandtl equation in boundary layer models \cite{Martinez_Raymond_Vancostenoble-2003}. Degenerate operators appear also in probability theory and Feller semigroups \cite{Feller-1952,Feller-1954}.

First we consider the case of a non-autonomous equation and we show a Carleman estimate for the linear (adjoint) equation
   \begin{equation}\label{eq:adjunto2_intro}
	\begin{cases}
		w_t+b(t)\left(a(x)w_x\right)_x+d(x,t)\sqrt{a}w_x + c(x,t)w  = h(x,t), & \ \ \ \text{en} \ \ \ {Q}\\
		w=0, & \ \ \ \text{in} \ \ \ {\Sigma}\\
        w(T) = w_T,& \ \ \ \text{in} \ \ \ (0,1)
	\end{cases}
\end{equation}
where $b$ belongs to $W^{1,\infty}(0,T)$. Moreover, we suppose that $b(t) \geq b_0 >0$ and $\frac{b'(t)}{b(t)} \leq B$ for some constants $b_0, B>0$. 

The main technical result of this paper is the following a Carleman estimate for the non-autonomous degenerate equation \eqref{eq:adjunto2_intro}, following the work of Demarque, Límaco and Viana for an autonomous degenerate equation \cite{DemarqueLimacoViana_deg_eq2018}. The weights and notations are explained in Section \ref{Sec:carleman_nonautonomous}.

\begin{teo}
\label{prop_carleman_1-intro}
	There exist $C>0$ and $\lambda_0,  s_0>0$ such that every solution $v$ of (\ref{eq:adjunto2_intro}) satisfies, for all
	$s\geq s_0$ and $\lambda\geq \lambda_0$,
	\begin{equation}\label{ecjv34_intro}
		\int_{0}^{T}\int_{0}^{1}e^{2s\varphi}\left(  (s\lambda)\sigma a b^2 w_x^2+(s\lambda)^2\sigma^2b^2 w^2 \right)\leq C\left(	\int_{0}^{T}\int_{0}^{1} e^{2s\varphi} |h|^2+(\lambda s)^3\int_{0}^{T}\int_{\omega}e^{2s\varphi} \sigma^3  w^2  \right).
	\end{equation}
\end{teo}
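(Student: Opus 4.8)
The plan is to establish the Carleman estimate by the classical conjugation-and-integration-by-parts method, carefully tracking the degeneracy at $x=0$. First I would introduce the weighted unknown $z = e^{s\varphi} w$, so that $w = e^{-s\varphi} z$, and compute the equation satisfied by $z$ after multiplying the equation for $w$ by $e^{s\varphi}$. This produces an operator of the form $P_1 z + P_2 z = P_3 z$, where $P_1$ collects the (formally) self-adjoint terms, $P_2$ the (formally) skew-adjoint terms, and $P_3$ the lower-order remainder together with $e^{s\varphi} h$. Since the coefficient $b(t)$ is time-dependent, the splitting must be done with some care: the term $z_t$ interacts with the time derivative of $\varphi$, and the hypothesis $b'(t)/b(t) \le B$ is exactly what is needed to absorb the extra term coming from differentiating $b$ in the cross-product estimates. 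I would choose the time-singular factor in $\varphi$ (typically something like $\theta(t) = 1/(t(T-t))$ or a regularized version) so that boundary terms at $t=0$ and $t=T$ vanish.

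Next I would expand $\|P_1 z + P_2 z\|_{L^2(Q)}^2 = \|P_1 z\|^2 + \|P_2 z\|^2 + 2(P_1 z, P_2 z)$ and compute the double product $(P_1 z, P_2 z)$ via integration by parts in both $x$ and $t$. This is the computational heart of the argument. The degeneracy $a(0)=0$ means that the boundary terms at $x=0$ produced by integrating by parts in $x$ need to be shown to vanish or to have a favorable sign; here the structural hypotheses on $a$ — namely $a \in C([0,1]) \cap C^1((0,1])$, $a>0$ on $(0,1]$, $a' \ge 0$, and $xa'(x) \le K a(x)$ with $K \in [0,1)$ — are what guarantee that the boundary contributions at $0$ are controlled, and the bound on $x/\sqrt{a}$ is used to handle the first-order term $d(x,t)\sqrt{a}\, w_x$ as a genuinely lower-order perturbation. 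After this expansion, the dominant positive terms should be of the form $(s\lambda)\sigma a b^2 z_x^2$ and $(s\lambda)^2 \sigma^2 b^2 z^2$ (integrated against $e^{0}$ since we are working with $z$), with all other terms either absorbable for $s,\lambda$ large or localized near $\omega$.

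The third step is the absorption and localization argument. The cross-product computation will leave behind lower-order terms — those involving the potential $c(x,t)$, the drift $d(x,t)\sqrt{a}$, the derivatives of the weight, and the time-derivative terms involving $b'(t)$ — which must be dominated by the two dominant terms by taking $\lambda \ge \lambda_0$ first (to beat $\lambda$-independent constants) and then $s \ge s_0$. A key point claimed in the introduction is that $s$ and $\lambda$ can be chosen large independently, as in the non-degenerate case, which suggests the weight $\varphi = \varphi(x,t)$ has the product structure $\sigma(t)(e^{\lambda \psi(x)} - \text{const})$ with $\psi$ chosen so that $\psi_x$ does not vanish in $\Omega \setminus \omega$ and the degenerate quantity $a \psi_x^2 / $ something stays controlled. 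Then one obtains the estimate with the integral over all of $Q$ on the right replaced by an integral over $\omega \times (0,T)$ plus the source term; finally one returns to the original variable $w = e^{-s\varphi} z$, which reproduces the weights $e^{2s\varphi}$ on both sides and yields \eqref{ecjv34_intro}.

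The main obstacle I expect is the treatment of the boundary terms at the degenerate endpoint $x=0$ in the integration by parts for $(P_1 z, P_2 z)$: one must show, using only $a \in C^1((0,1])$ and the Hardy-type consequences of $xa'(x) \le Ka(x)$, that all such terms either vanish in the limit $x \to 0^+$ or have the correct sign, and simultaneously that the time-dependence of $b(t)$ does not spoil this — the interplay of the $b'(t)$ terms with the degenerate spatial boundary terms is the delicate combination. A secondary difficulty is bookkeeping: ensuring that the drift term $d(x,t)\sqrt{a}\,w_x$, which is of intermediate order and involves $\sqrt{a}$ rather than $a$, is genuinely absorbed by the $(s\lambda)\sigma a b^2 w_x^2$ term using the boundedness of $x/\sqrt{a}$, rather than needing a separate argument.
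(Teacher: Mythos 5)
Your proposal follows essentially the same route as the paper: conjugate with $w=e^{s\varphi}v$, split the conjugated operator into self-adjoint and skew-adjoint parts, compute the cross product by integration by parts with the two-piece weight $\Psi$ built from $\int x/a$, absorb the lower-order terms for $s,\lambda$ large, localize the remaining gradient term near $\omega$ with a cutoff, and finally treat $d\sqrt{a}\,w_x$ and $cw$ as part of the source to be absorbed by the left-hand side. The only ingredient your sketch passes over quickly is that the zeroth-order term produced by the cross product carries the degenerate weight $x^2/a$ near $x=0$, so recovering the full $(s\lambda)^2\sigma^2 b^2 w^2$ on the left requires an additional interpolation combined with the Hardy--Poincar\'e inequality, exactly as the paper does in the proof of its Proposition 2.
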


As an application, we show the global null controllability of our system \eqref{eq:PDE}. Since the equations are degenerate, the initial conditions $u_0$ and $v_0$ are taken in the weighted space $H_a^1(\hat \Omega)$; see \cite{Alabau_cannarsa_fragnelli-06} and Section \ref{Sec:carleman_nonautonomous}.

\begin{teo}
\label{thm:local_null_controllability}
Under the hypothesis considered in setting of \eqref{eq:PDE}, if moreover 
$\rho_2 H_1$, $\rho_2 H_2 \in L^2(Q)$, 
then, for any $T>0$ and any $u_0, v_0 \in H_a^1(\Omega)$, 
there exists a control $h \in L^2(\omega \times (0,T))$ such that the solution of \eqref{eq:PDE} is null-controllable at time $T$.
\end{teo}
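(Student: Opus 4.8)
The standard strategy is to reduce null controllability of the coupled system \eqref{eq:PDE} to an observability inequality for the adjoint system, which we then derive from the Carleman estimate in Theorem \ref{prop_carleman_1-intro}. Let me sketch the plan.

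First I would write the adjoint system. Formally, testing \eqref{eq:PDE} against the solution $(\varphi,\psi)$ of the backward system
\begin{equation*}
\begin{cases}
-\varphi_t - b(t)(a\varphi_x)_x - (d_1\sqrt a\,\varphi)_x + b_{11}\varphi + b_{21}\psi = 0 & \text{in } Q,\\
-\psi_t - b(t)(a\psi_x)_x - (d_2\sqrt a\,\psi)_x + b_{12}\varphi + b_{22}\psi = 0 & \text{in } Q,\\
\varphi = \psi = 0 & \text{on } \Sigma,\\
\varphi(\cdot,T) = \varphi_T,\quad \psi(\cdot,T) = \psi_T & \text{in } \Omega,
\end{cases}
\end{equation*}
shows (by a duality/HUM argument, e.g. as in \cite{Fur_Ima-96}) that null controllability of the first component of \eqref{eq:PDE} is equivalent to the observability inequality
\begin{equation*}
\|\varphi(\cdot,0)\|_{L^2(\Omega)}^2 + \|\psi(\cdot,0)\|_{L^2(\Omega)}^2 \le C \int_0^T\!\!\int_\omega |\varphi|^2 ,
\end{equation*}
valid for all solutions of the adjoint system, where crucially only $\varphi$ — the component directly coupled to the control — is observed on $\omega$.

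The core step is to prove this observability inequality. I would apply Theorem \ref{prop_carleman_1-intro} to each equation separately (each adjoint equation, after expanding $(d_i\sqrt a\,\cdot)_x$ into the form of \eqref{eq:adjunto2_intro} with bounded zeroth-order terms absorbed into $c(x,t)$, and the cross terms $b_{21}\psi$, $b_{12}\varphi$ treated as source terms $h$). This gives two Carleman estimates; adding them yields a global weighted bound on both $\varphi$ and $\psi$ (in the $(s\lambda)^2\sigma^2 b^2 w^2$ term) controlled by $\int_Q e^{2s\varphi}(|b_{21}\psi|^2 + |b_{12}\varphi|^2)$ plus local terms $\int_0^T\!\!\int_\omega e^{2s\varphi}\sigma^3(\varphi^2 + \psi^2)$. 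Since $b_{12}, b_{21}$ are bounded, for $s$ large the source terms $\int_Q e^{2s\varphi}|b_{ij}w|^2$ are absorbed by the left-hand side. The remaining obstruction is the local term $\int_0^T\!\!\int_\omega e^{2s\varphi}\sigma^3\psi^2$ involving $\psi$, which is NOT directly observed. This is where the coupling hypothesis $\inf_{\omega_1\times[0,T]} b_{21} > 0$ enters: on the subdomain $\omega_1$, we have $\psi^2 \lesssim |b_{21}\psi|^2$, and $b_{21}\psi$ can be recovered from the first adjoint equation as $b_{21}\psi = \varphi_t + b(a\varphi_x)_x + (d_1\sqrt a\,\varphi)_x - b_{11}\varphi$. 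I would thus introduce a cutoff $\xi \in C_c^\infty(\omega)$ with $\xi \equiv 1$ on $\omega_1$, multiply the first adjoint equation by $\xi\,(s\lambda)^3\sigma^3 e^{2s\varphi}\,b_{21}\psi / (\text{lower bound of } b_{21})$, integrate by parts in $x$ and $t$, and estimate — using Cauchy–Schwarz with a small parameter $\varepsilon$ — the resulting terms by $\varepsilon$ times the global left-hand side (the $a\varphi_x^2$ and $\varphi^2$ Carleman terms, plus analogous $\psi$ terms) plus $C_\varepsilon \int_0^T\!\!\int_\omega e^{2s\varphi}\sigma^{(\cdot)}\varphi^2$. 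Absorbing the $\varepsilon$-terms into the left-hand side leaves only a local $\varphi$-term on the right, giving the desired Carleman estimate for the coupled adjoint system observed through $\varphi$ alone.

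The hard part — and the step requiring the most care — is precisely this absorption of the local $\psi$-term: one must track which powers of $s$ and $\lambda$ and of the weight $\sigma$ appear after the integrations by parts (the degenerate term $b(a\varphi_x)_x$ produces, after integration by parts, a term with $a\varphi_x(\xi(s\lambda)^3\sigma^3 e^{2s\varphi}b_{21}\psi)_x$, whose $x$-derivative hits $e^{2s\varphi}$ and brings down a factor $s\lambda\sigma_x \sim s\lambda^2\sigma$ times something involving $\sqrt a$, so one needs $\sqrt a\,\psi\,\varphi_x$ bounded appropriately), and verify they are all of strictly lower order in $s$ than the terms on the left of the summed Carleman estimate, so that the smallness can genuinely be achieved for $s$ large. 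One also must ensure all integrations by parts in $x$ are legitimate near the degeneracy $x=0$ — but since $\xi$ is supported away from $0$ inside $\omega \subset\subset (0,1)$ (or at least in the interior where $a>0$), the boundary terms at $x=0$ vanish and no degeneracy issue arises in this step. Finally, from the Carleman estimate one passes to observability by a routine argument: bound $e^{2s\varphi}\sigma^2 b^2$ from below by a positive constant on a time subinterval $[T/4,3T/4]$, combine with the energy estimate (dissipation/Caccioppoli) for the adjoint system $\|(\varphi,\psi)(\cdot,0)\|_{L^2}^2 \le C\|(\varphi,\psi)(\cdot,t)\|_{L^2}^2$ for $t$ in that subinterval, and conclude. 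The well-posedness of \eqref{eq:PDE} with sources $H_i$ satisfying $\rho_2 H_i \in L^2(Q)$ in the weighted space $H_a^1(\Omega)$ follows from the semigroup theory for the degenerate operator (as in \cite{Alabau_cannarsa_fragnelli-06}), and the control is constructed as usual by minimizing a penalized functional whose coercivity is exactly the observability inequality just proved.
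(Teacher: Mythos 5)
Your proposal follows essentially the same route as the paper: a Carleman estimate for the coupled adjoint system observed only through $\phi$, obtained by summing the scalar estimates, absorbing the cross terms, and eliminating the local $\psi$-term via the lower bound on $b_{21}$ over $\omega_1$ (multiplying the first adjoint equation by a cutoff times $e^{2s\varphi}\lambda^3 s^3\sigma^3\psi$), then passing to observability with weights modified near $t=0$ and energy estimates on a time subinterval, and finally constructing the control by duality. The only cosmetic difference is that the paper builds the control via the Fursikov--Imanuvilov weighted Lax--Milgram formulation (which yields the weighted estimate \eqref{estimate for solution} and hence $u(T)=v(T)=0$ directly), rather than minimizing a penalized HUM functional, but these are equivalent variational implementations of the same observability-based argument.
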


%Our next result shows conditions under which the Nash quasi-equilibrium $(\hat v_1,\hat v_2)$ obtained in Theorem \ref{thm:local_null_controllability} is a Nash equilibrium.

%\begin{propo} 
%\label{thm:nash_equilibrium}
%Let $u_0 \in H_a^1(\hat \Omega_0)$.
%Under the hypotheses of Theorem \ref{thm:local_null_controllability}, 
%\textcolor{red}{there exists a constant $\delta_0 > 0$ such that if
%$$
%\|y^0\|_{H_a^1(\Omega)} \leq \delta_0,
%$$}
%if $\mu_1,\mu_2$ are sufficiently large, then the Nash quasi-equilibrium  $(\hat v^1,\hat v^2)$ obtained in Theorem \ref{thm:local_null_controllability} is a Nash equilibrium for the functionals defined in \eqref{eq:def_Ji} associated to the  equation \eqref{eq:PDE}.
%\end{propo}

\noindent{\bf Outline of the paper:}
%In Section \ref{sec:diffeomorphism} we write our problem as an equation with time variable coefficients on a cylindrical domain using a diffeomorphism. 
%In Section \ref{sec:characterization_convexity} we characterize Nash quasi-equilibrium and we prove Proposition \ref{thm:nash_equilibrium} implying the convexity of the functionals and the fact that the Nash quasi-equilibrium are Nash equilibrium. 
In Section \ref{Sec:carleman_nonautonomous} we prove Theorem \ref{prop_carleman_1-intro} using a sequence of lemmas based in the work of Demarque, Límaco and Viana for an autonomous degenerate equation \cite{DemarqueLimacoViana_deg_eq2018}. 
In Section \ref{sec:carleman_system_nonautonomous} we prove a Carleman estimate for the adjoint system of equations associated to \eqref{eq:PDE}. In Section \ref{sec:global_null_controllability}
we prove Theorem \ref{thm:local_null_controllability} which establishes the global null controllability result. 
In Section \ref{sec:final_remarks} we present some remarks and related open problems. 
Finally, in Appendix \ref{appendix A} we show the well-possedness of the system \eqref{eq:PDE}.

%\section{CHARACTERIZATION OF NASH QUASI-EQUILIBRIUM AND EQUILIBRIUM}
%\label{sec:characterization_convexity}

\section{Carleman estimate for a degenerate non-autonomous equation}
\label{Sec:carleman_nonautonomous}

The existence and uniqueness of the system \eqref{eq:PDE} is obtained in the following proposition which is proved in Appendix \ref{appendix A}.

\begin{propo}
Let $u_0,v_0 \in H^1_a(\Omega)$,  $h \in L^2(0,T,L^2(\omega))$. Then the system \eqref{eq:PDE} has a unique solution $(u,w)$ satisfying
\begin{equation*}
\begin{split}
\|(u,v)\|_{L^2(0,T,L^2(\Omega)) \cap H^1(0,T,H^2_a(0,1))} \leq C &\left( \|u_0\|_{H^1_a(\Omega)} + \|v_0\|_{H^1_a(\Omega)} + \|h\|_{L^2(0,T,L^2(\omega))} + \sum_{i=1}^2 \|H_{i}\|_{L^2(0,T,L^2(\Omega))} \right),
\end{split}    
\end{equation*}
where $C=C(\Omega)$ is a positive constant.
\end{propo}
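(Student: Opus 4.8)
The plan is to establish this well-posedness result by the Faedo--Galerkin method combined with two energy estimates, treating the drift terms $d_i\sqrt{a}\,w_x$ and the coupling terms $b_{ij}w$ as bounded perturbations of the degenerate diffusion. I would first recall from \cite{Alabau_cannarsa_fragnelli-06} that the degenerate operator $\mathcal{A}w=(aw_x)_x$, with its natural domain $D(\mathcal{A})$ (functions in $H^2_a(\Omega)$ vanishing at $x=1$), is self-adjoint, nonpositive, and has compact resolvent; let $\{e_k\}_{k\ge1}\subset D(\mathcal{A})$ be an orthonormal basis of $L^2(\Omega)$ consisting of eigenfunctions, $-\mathcal{A}e_k=\mu_k e_k$. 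I would look for $u^n=\sum_{k=1}^n\alpha_k^n(t)e_k$ and $v^n=\sum_{k=1}^n\beta_k^n(t)e_k$ solving the projection of \eqref{eq:PDE} onto $\mathrm{span}\{e_1,\dots,e_n\}$, with $u^n(0),v^n(0)$ the $L^2$-projections of $u_0,v_0$. Since $b,d_i,b_{ij}$ are bounded and measurable in time, this is a linear system of ODEs with $L^\infty$ coefficients, hence uniquely solvable on $[0,T]$ once the uniform bounds below are in force.

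For the first energy estimate, testing the $u^n$-equation with $u^n$, the $v^n$-equation with $v^n$, and adding, one gets
\[
\tfrac12\tfrac{d}{dt}\big(\|u^n\|^2+\|v^n\|^2\big)+b(t)\big(\|\sqrt{a}\,u^n_x\|^2+\|\sqrt{a}\,v^n_x\|^2\big)=R^n,
\]
where $R^n$ collects the drift, coupling, and source contributions. Using $b(t)\ge b_0$, the bound $|d_i|,|b_{ij}|\le M$, and Young's inequality, each drift term obeys $\big|\int d_i\sqrt{a}\,w^n_x\,w^n\big|\le\frac{b_0}{2}\|\sqrt{a}\,w^n_x\|^2+C\|w^n\|^2$, the coupling terms are bounded by $C(\|u^n\|^2+\|v^n\|^2)$, and the source terms by $C(\|h\|_{L^2(\omega)}^2+\|H_1\|^2+\|H_2\|^2+\|u^n\|^2+\|v^n\|^2)$. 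Gronwall's inequality then bounds $(u^n,v^n)$ in $L^\infty(0,T;L^2(\Omega))\cap L^2(0,T;H^1_a(\Omega))$ uniformly in $n$, in terms of $\|u_0\|_{L^2}$, $\|v_0\|_{L^2}$, $\|h\|_{L^2(\omega\times(0,T))}$, and $\sum_i\|H_i\|_{L^2(Q)}$.

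For the second estimate I would test with $-\mathcal{A}u^n$ and $-\mathcal{A}v^n$ (equivalently, multiply the $k$-th component equations by $\mu_k\alpha_k^n$ and $\mu_k\beta_k^n$). Using the identity $\int_\Omega w^n_t(aw^n_x)_x=-\tfrac12\tfrac{d}{dt}\|\sqrt{a}\,w^n_x\|^2$, valid because the boundary contribution at $x=1$ vanishes by the Dirichlet condition and at $x=0$ by $a(0)=0$, and absorbing the drift via $\big|\int d_i\sqrt{a}\,w^n_x(aw^n_x)_x\big|\le\varepsilon\|(aw^n_x)_x\|^2+C\|\sqrt{a}\,w^n_x\|^2$, one obtains a differential inequality for $\|\sqrt{a}\,u^n_x\|^2+\|\sqrt{a}\,v^n_x\|^2$ whose right-hand side is controlled by the previous step. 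Gronwall then bounds $(u^n,v^n)$ in $L^\infty(0,T;H^1_a(\Omega))\cap L^2(0,T;H^2_a(\Omega))$, and reading off $w^n_t$ from the equation gives a bound in $L^2(0,T;L^2(\Omega))$, all uniform in $n$. Extracting a weakly/weakly-$*$ convergent subsequence and using that every term of \eqref{eq:PDE} is linear, I pass to the limit to produce a solution $(u,v)$; Aubin--Lions compactness $L^2(0,T;H^1_a(\Omega))\cap H^1(0,T;L^2(\Omega))\hookrightarrow\hookrightarrow L^2(Q)$, weak lower semicontinuity of the norms, and $u^n(0)\to u_0$, $v^n(0)\to v_0$ in $L^2$ yield the asserted estimate and initial conditions. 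Uniqueness is immediate from linearity: the difference of two solutions with zero data solves the homogeneous system, and the estimates above force it to vanish.

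The genuinely delicate point is the degenerate endpoint $x=0$: one must justify all integrations by parts with no boundary term there and control expressions such as $\sqrt{a}\,w_x$ and $w/x$ near $0$. This is exactly what the weakly degenerate hypothesis $xa'(x)\le K a(x)$ with $K\in[0,1)$ provides, through the associated Hardy-type inequality and the density of smooth (resp.\ $H^2_a$) functions in $H^1_a(\Omega)$, together with the spectral description of $\mathcal{A}$ used for the Galerkin basis; all of this is available from \cite{Alabau_cannarsa_fragnelli-06}. The non-autonomy is harmless, since it enters only through the scalar factor $b(t)$, which satisfies $b_0\le b(t)$ with $b\in W^{1,\infty}(0,T)$, so it never interferes with the estimates (alternatively, the time change $\tau=\int_0^t b(s)\,ds$ removes it, leaving bounded time-dependent lower-order coefficients). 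Finally, the drift term $d_i\sqrt{a}\,w_x$ lives precisely at the scale of the diffusion energy, so it is absorbed using $b(t)\ge b_0$, and no smallness assumption on $d_i$ is needed.
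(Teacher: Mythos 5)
Your proposal is correct and follows essentially the same route as the paper's Appendix~A: a Faedo--Galerkin scheme built on eigenfunctions of the degenerate operator, a first energy estimate obtained by testing with $(u^n,v^n)$, a second obtained by testing with $-(a\,u^n_x)_x$ and $-(a\,v^n_x)_x$, Gronwall, and Aubin--Lions compactness to pass to the limit. The only (immaterial) deviations are that you recover the bound on $u^n_t, v^n_t$ by reading it off from the equation rather than by the paper's separate test against $(u^n_t,v^n_t)$, and that you are more explicit about the vanishing of the boundary terms at the degenerate endpoint.
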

%The proof of this proposition is given in 

\subsection{Hilbert spaces in the divergence case}
\label{subsec:prelim}
   Following \cite{Alabau_cannarsa_fragnelli-06}, for our system which is in divergence form, we take the following weighted Hilbert spaces. In the \emph{weakly degenerate} case, we consider
   $$H^1_a(0,1)=\left\{ u\in L^2(0,1) \ : \ u \ \text{is absolutely continuous in} \ \ (0,1], \sqrt{a}u_x\in L^2(0,1) \ \text{and} \  u(0)=u(1)=0 \right\}$$
   and 
   $$H^2_a(0,1)=\left\{u\in H^1_a(0,1) \ : \ au_x\in H^1(0,1) \right\}$$
   
In both cases, we consider inner products and norms given by
   $$\langle u,v \rangle_{H^1_a(0,1)}=\langle u,v \rangle_{L^2(0,1)}+\langle \sqrt{a}u_x,\sqrt{a}v_x \rangle_{L^2(0,1)}, \ \ \|u\|^2_{H^1_a(0,1)}=\|u\|^2_{L^2(0,1)}+\|\sqrt{a} u_x\|^2_{L^2(0,1)},$$
for all $u, v\in H^1_a(0,1)$, and
  $$\langle u,v \rangle_{H^2_a(0,1)}=\langle u,v \rangle_{H^1_a(0,1)}+\langle ({a}u_x)_x,({a}v_x)_x \rangle_{L^2(0,1)}, \ \ \|u\|^2_{H^2_a(0,1)}=\|u\|^2_{H^1_a(0,1)}+\|({a}u_x)_x\|^2_{L^2(0,1)},$$
for all $u, v\in H^2_a(0,1)$.
In order to get the null controllability of the linear system \eqref{eq:PDE} we prove the observability of the following adjoint system:

\begin{equation}\label{eq:adjoint_PDE}
	\left\{\begin{aligned}
		&-\phi_t - b(t) \left(a(x) \phi_{x}\right)_{x} - (d_1(x,t)\sqrt{a}\phi)_x + b_{11}\phi + b_{21}\psi = F_1 &&\text{in} && Q,\\
        &-\psi_t - b(t) \left(a(x) \psi_{x}\right)_{x} - (d_2(x,t)\sqrt{a}\psi)_x + b_{12}\phi + b_{22}\psi = F_2 &&\text{in}&& Q,\\	&\phi(0,t)=\phi(1,t)=\psi(0,t)=\psi(1,t)=0&&\text{on} && (0,T), \\
		&\phi(\cdot,T) = \phi_T, \quad \psi(\cdot,T) = 0 &&\text{in} && \Omega,
	\end{aligned}
	\right.
\end{equation}
where $\phi_T\in L^2(\Omega)$.

%--------- BEGIN COMMENT ---------
\begin{comment}   
\begin{teo}\label{teo34noauto}
	Assume a (WD) or (SD) at $x_0\in \{0,1\}$ and $b\in W^{1,\infty}(0, T)$ a strictly positive function. If $f\in L^2(0,T; L^2(0,1))$ and $u_0\in L^2(0,1)$, then there exists a unique solution $u\in C([0,T];L^2(0,1))$ of
	(P), and the following inequality holds
	\begin{equation}\label{ec36noauto}
		\sup_{t\in[0,T]}\|u(t)\|^2_{L^2(0,1)}\leq C_T \left(  \|u_0\|^2_{L^2(0,1)}+\|f\|^2_{L^2(0,T;L^2(0,1))} \right)
	\end{equation}
	for a positive constant $C_T$.\\
	In addition, if \ $f\in W^{1,1}(0,T;L^2(0,1))\cap L^2(0,T; H^2_a(0,1))$ and $u_0\in H^2_a(0,1)$ then $u\in C([0,T];H^2_a(0,1))\cap C^1([0,T];L^2(0,1))\subseteq L^2(0,T;H^2_a(0,1))\cap C([0,T];H^1_a(0,1))\cap H^1(0,T;L^2(0,1)) $ and we have
	\begin{eqnarray}
		\sup_{t\in[0,T]}\|u(t)\|^2_{H^1_a(0,1)}\ + \ \int_{0}^{T}\left(  \|u_t\|^2_{L^2(0,1)}+\|(au_x)_x(t)\|^2_{L^2(0,1)} \right)dt\leq C\left( \|u_0\|^2_{H^1_a(0,1)}+\|f\|^2_{L^2(0,T;L^2(0,1))}  \right)
	\end{eqnarray}
\end{teo}
este resultado hay q adecuarle para el sistema de ecuaciones que se tiene del sistema optimal linealizado 
\end{comment}
%------------------ END COMMENT -------------------------
   
\subsection{Weights of Carleman estimate}
%%%%%% Do artigo HIERARQUICO 2 %%%%%%%
%Since $O_d \cap O \neq \emptyset$, then it was proved in \cite{Fur_Ima-96} that there is a function $\sigma \in C^2([0,1])$ such that, in an open set $O' = (\alpha',\beta') \subset O \cap O_d$, 
%$$
%\sigma(x) > 0 \quad \text{in} \quad (0,1), \quad \sigma(0)=\sigma(1) = 0, \quad \sigma_x(x) \neq 0, \quad \text{in} \quad [0,1]-O_o,
%$$
%where $O_o \subset\subset O'$ is an open subset.

Following \cite{DemarqueLimacoViana_deg_eq2018, DemarqueLimacoViana_deg_sys2020}, let $\omega'=(\alpha',\beta') \subset\subset \omega$ and let $\Psi : [0,1] \to \R$ be a $C^2$ function such that
$$
\Psi(x) = 
\begin{cases}
\int_0^x \frac{s}{a(s)} ds, \quad x \in [0,\alpha'), \\
-\int_{\beta'}^x \frac{s}{a(s)} ds, \quad x \in [\beta',1].
\end{cases}
$$
For $\lambda \geq \lambda_0$ define the functions
$$
\theta(t) = \frac{1}{(t(T-t))^4}, \qquad \eta(x) = e^{\lambda(|\Psi|_\infty + \Psi)}, \qquad \sigma(x,t) = \theta(t) \eta(x),
$$
$$
\varphi(x,t) = \theta(t) (e^{\lambda(|\Psi|_\infty + \Psi)}-e^{3\lambda|\Psi|_\infty}).
$$

%\begin{equation}\label{eq:PDE}
%	\left\{\begin{aligned}
%		&u_t - b(t) \left(a(x) u_{x}\right)_{x} + d_1(x,t)u_x + b_{11}u + b_{12}v = h 1_{O} + H_1 &&\text{in} && Q,\\
%        &v_t - b(t) \left(a(x) v_{x}\right)_{x} + d_2(x,t)v_x + b_{21}u + b_{22}v = H_2 &&\text{in}&& Q,\\
%		&u(0,t)=u(1,t)=v(0,t)=v(1,t)=0&&\text{on} && (0,T), \\
%		&u(\cdot,0) = u_0, \quad v(\cdot,0) = v_0 &&\text{in} && \Omega_0,
%	\end{aligned}
%	\right.
%\end{equation}

We introduce the notation
$$
I(\varsigma) = \int_0^T\int_\Omega e^{2s\varphi}((s\lambda)\sigma b(t)^2 a(x) \varsigma_x^2 + (s\lambda)^2 \sigma^2 b(t)^2\varsigma^2)dxdt.
$$

%The following proposition was proved for an autonomous system in a cylindrical domain \cite{DemarqueLimacoViana_deg_sys2020} for an adjoint system of two retrograde equations. 

%After making the substitution $t \mapsto T-t$ to our second equation in \eqref{eq:adjoint_PDE}, we are in the case treated in \cite{DemarqueLimacoViana_deg_sys2020} and the conclusion remains true after coming back to the original variable.

%For a non-autonomous equation we get an analogous result. 

First we show a Carleman estimate for the linear equation
   \begin{equation}\label{adjunto2_texto_art}
	\begin{cases}
		w_t+b(t)\left(a(x)w_x\right)_x+d(x,t)\sqrt{a}w_x + c(x,t)w  = h(x,t), & \ \ \ \text{en} \ \ \ {Q}\\
		w=0, & \ \ \ \text{in} \ \ \ {\Sigma}\\
        w(T) = w_T,& \ \ \ \text{in} \ \ \ (0,1)
	\end{cases}
\end{equation}
where $b$ belongs to $W^{1,\infty}(0,T)$, $b(t) \geq b_0 >0$ and $\frac{b'(t)}{b(t)} \leq B$ for some constants $b_0, B>0$. 
%where $b$ is a continuous function in $[0,T]$ such that $\frac{b'(t)}{b(t)} \leq B$, for some constant $B>0$. 

\begin{teo}
\label{prop_carleman_1}
	There exist $C>0$ and $\lambda_0,  s_0>0$ such that every solution $w$ of (\ref{adjunto2_texto_art}) satisfies, for all
	$s\geq s_0$ and $\lambda\geq \lambda_0$,
	\begin{equation}\label{ecjv34}
		%\int_{0}^{T}\int_{0}^{1}e^{2s\varphi}\left(  (s\lambda)\sigma a b^2 w_x^2+(s\lambda)^2\sigma^2b^2 w^2 \right)
        I(w) \leq C\left(	\int_{0}^{T}\int_{0}^{1} e^{2s\varphi} |h|^2+(\lambda s)^3\int_{0}^{T}\int_{\omega}e^{2s\varphi} \sigma^3  w^2  \right).
	\end{equation}
\end{teo}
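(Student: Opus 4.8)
The plan is to adapt the Fursikov--Imanuvilov method, in the degenerate form of Demarque, L\'imaco and Viana \cite{DemarqueLimacoViana_deg_eq2018}, keeping careful track of every place where the time-dependent factor $b(t)$ and its derivative $b'(t)$ enter; the argument is organized as a chain of lemmas. The first step is the conjugation $z=e^{s\varphi}w$: from $w=e^{-s\varphi}z$ and the identities $\varphi_x=\lambda\theta\eta\,\Psi'$, $\varphi_t=\theta'\,(\eta-e^{3\lambda|\Psi|_\infty})$ one computes the equation satisfied by $z$ and splits it into its formally self-adjoint and skew-adjoint parts in $L^2(Q)$, writing $P^{+}z+P^{-}z=e^{s\varphi}h+Rz$ with
\[
P^{+}z=b\,(a z_x)_x+s^2 b\,a\,\varphi_x^2\,z-s\,\varphi_t\,z,\qquad
P^{-}z=z_t-2s\,b\,a\,\varphi_x\,z_x-s\,b\,(a\varphi_x)_x\,z,
\]
and $Rz=-d\sqrt{a}\,z_x+s\,d\sqrt{a}\,\varphi_x\,z-c\,z$ collecting the lower order contributions of the coefficients $c,d$ of \eqref{adjunto2_texto_art}. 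Here it is crucial that $\sqrt{a}\,\varphi_x=\lambda\theta\eta\,(x/\sqrt{a})$ is bounded, since $x/\sqrt{a}$ is nondecreasing under the weak degeneracy hypothesis.

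The core of the proof is the computation of the cross term in the elementary identity
\[
\|e^{s\varphi}h+Rz\|_{L^2(Q)}^2=\|P^{+}z\|_{L^2(Q)}^2+\|P^{-}z\|_{L^2(Q)}^2+2\,(P^{+}z,P^{-}z)_{L^2(Q)},
\]
carried out by repeated integration by parts in $x$ and in $t$. Collecting all the contributions one obtains, first, a block of ``volume'' terms whose leading part is positive and, for $s,\lambda$ large, is bounded below by a constant times the analogue of $I(w)$ obtained by replacing $w$ with $z$ and dropping the factor $e^{2s\varphi}$; this is exactly where the shape of $\Psi$ matters, since $a(\Psi')^2=x^2/a$ and $a^2(\Psi')^4=(x^2/a)^2$ are bounded while $(a\Psi')'=1$ near the degeneracy, so that all the a priori dangerous factors produced by the degeneration are tamed. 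Second, one gets boundary terms at $x=0$, $x=1$, $t=0$ and $t=T$: those at $t=0,T$ vanish because $\theta(t)\to+\infty$ there (first for $w_T$ in a dense subspace, where $w$ is regular enough to justify all the integrations by parts, then by density); the term at $x=1$ has the good sign because $\Psi'=-x/a<0$ near $1$, hence $\varphi_x<0$ there; and the terms at $x=0$ vanish thanks to the weak degeneracy together with the boundary conditions built into $H^1_a(0,1)$ and $H^2_a(0,1)$ (the factor $a(0)=0$, $z(0,t)=0$, and $a u_x\in H^1(0,1)$). Every time the integration by parts in $t$ falls on $b(t)$ it produces a $b'(t)$, bounded, together with a term of strictly lower order in $\theta$; the hypotheses $b\ge b_0>0$ and $b'/b\le B$ are what keep all of these under control.

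It remains to absorb the remainders. The term $\|Rz\|^2$ contains $\|d\sqrt{a}\,z_x\|^2\le\|d\|_\infty^2\int_Q a\,z_x^2$, $s^2\|d\sqrt{a}\,\varphi_x z\|^2$ and $\|c\,z\|^2$, while the lower order volume terms carry powers of $\theta$ strictly below those of the leading terms (one uses $|\theta'|\le C\theta^{5/4}$, $|\theta''|\le C\theta^{3/2}$); invoking in addition $\theta\ge\theta_{\min}>0$, $\eta\ge1$, $b\ge b_0$ and the boundedness of $\sqrt{a}\,\varphi_x$, $c$ and $d$, all of them are absorbed by taking $\lambda\ge\lambda_0$ and then $s\ge s_0(\lambda)$ large. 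This leaves a single localized term of the form $\int_0^T\!\int_{\omega'}(\cdots)(z^2+z_x^2)$, which a standard cut-off argument (a function $\xi\in C^\infty_c(\omega)$ with $\xi\equiv1$ on $\omega'$, followed by integrations by parts in $x$) replaces by $C(s\lambda)^3\int_0^T\!\int_\omega e^{2s\varphi}\sigma^3 w^2$, the rest being reabsorbed. Finally, undoing the substitution $z=e^{s\varphi}w$ --- using $e^{2s\varphi}w^2=z^2$, $e^{2s\varphi}w_x^2\le 2z_x^2+2s^2\varphi_x^2 z^2$, and that $s^2\varphi_x^2$ is bounded by a small fraction of the leading weight for $s$ large --- turns the $z$-inequality into \eqref{ecjv34}, after noting that the weight produced by the computation dominates both $(s\lambda)\sigma a b^2$ and $(s\lambda)^2\sigma^2 b^2$ once $s$ and $\lambda$ are large.

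\noindent\textbf{Main obstacle.}
I expect two points to require real care. The first is the rigorous vanishing of the boundary terms at the degenerate endpoint $x=0$: one must show that each of them is genuinely zero in the weighted spaces $H^1_a(0,1)$ and $H^2_a(0,1)$, which rests on the fine behaviour encoded by $xa'\le Ka$ with $K<1$ (equivalently, $x/\sqrt{a}$ nondecreasing and bounded) together with an approximation argument, and this is where the weak degeneracy assumption is really exploited. The second is the non-autonomous bookkeeping: one has to verify that none of the new terms involving $b'$, nor any product of $b$ with a time derivative of the weight, spoils the sign of the leading terms, so that the whole computation of \cite{DemarqueLimacoViana_deg_eq2018} goes through with $b(t)$ in place of a constant diffusion coefficient; this is exactly the role played by $b\ge b_0>0$ and $b'/b\le B$. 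Everything else is the (long but routine) bookkeeping characteristic of Carleman estimates.
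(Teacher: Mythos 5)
Your proposal is correct and follows essentially the same route as the paper: the same weights, the same conjugation $z=e^{s\varphi}w$, the same splitting into the self-adjoint and skew-adjoint parts $L^{\pm}$, the same sign analysis of the boundary term at $x=1$ via $\Psi'<0$ and the vanishing at $x=0$ from $a(0)=0$, the same cut-off argument to replace the local gradient term by $(s\lambda)^3\int_0^T\!\int_\omega e^{2s\varphi}\sigma^3 w^2$, and the same control of the $b'$-terms through $b\ge b_0$ and $b'/b\le B$. The only (immaterial) organizational difference is that you carry the lower-order terms $d\sqrt{a}w_x+cw$ in a remainder $Rz$ from the start, whereas the paper first proves the estimate for the principal part alone (Proposition \ref{prop_carleman_11}) and then absorbs $d\sqrt{a}v_x$ and $cv$ a posteriori as part of the source, using the boundedness of $x/\sqrt{a}$ exactly as you do.
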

%The proof of this proposition is presented in Appendix \ref{appendix B}.

\subsection{Proof of Carleman estimate}

Let us first consider a degenerate equation of the form:
   \begin{equation}\label{adjunto2}
	\begin{cases}
            v_t+b(t)\left(a(x)v_x\right)_x =h(x,t), & \ \ \   \text{in} \ \ \ {Q},\\
		v=0, & \ \ \ \text{on} \ \ \ {\Sigma}, \\
            v(T)=v_T, & \ \ \ \text{in} \ \ \ {(0,1)}.
	\end{cases}
\end{equation}
%Recall that we suppose that $b$ is a continuous function in $[0,T]$ 
and we denote
$$
m := \min\limits_{t \in [0,T]} b(t), \qquad M :=\max\limits_{t \in [0,T]} b(t).
$$
%Moreover we suppose that $\frac{b'(t)}{b(t)} \leq B$, for some constant $B>0$. 

%We recall some functions and notations for the weights. Let $\omega'=(\alpha',\beta')\subset \subset \omega$ and let   $\psi : [0,1]\rightarrow \rea$ be a $C^2$ function such that
%\begin{equation}\label{ecjv32}
%	\psi(x)=\begin{cases}
%		\int\limits_{0}^{x}\frac{y}{a(y)}dy, \ \ & \ \ x\in [0,\alpha')\\
%		-\int\limits_{\beta'}^{x}\frac{y}{a(y)}dy, \ \ & \ \ x\in [\beta',1]\\
%	\end{cases}
%\end{equation}
%For $\lambda\geq \lambda_0$ define
%\begin{equation}\label{ecjv33}
%	\theta(t)=\frac{1}{t^4(T-t)^4}, \quad \eta(x)=e^{\lambda(|\psi|_\infty+\psi)}, \quad  \sigma(x,t)=\eta(x)\theta(t), \quad  \varphi(x,t)=\theta(t)\left( e^{\lambda(|\psi|_\infty+\psi)}-e^{3\lambda|\psi|_\infty} \right)
%\end{equation}

\begin{propo}\label{propojva1}
\label{prop_carleman_11}
	There exist $C>0$ and $\lambda_0,  s_0>0$ such that every solution $v$ of (\ref{adjunto2}) satisfies, for all
	$s\geq s_0$ and $\lambda\geq \lambda_0$,
	\begin{equation}\label{ecjv34}
		\int_{0}^{T}\int_{0}^{1}e^{2s\varphi}\left(  (s\lambda)\sigma a(x) b(t)^2 v_x^2+(s\lambda)^2\sigma^2b(t)^2 v^2 \right)\leq C\left(	\int_{0}^{T}\int_{0}^{1} e^{2s\varphi} |h|^2+(\lambda s)^3\int_{0}^{T}\int_{\omega}e^{2s\varphi} \sigma^3  v^2  \right)
	\end{equation}
	
\end{propo}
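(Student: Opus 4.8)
The plan is to prove \eqref{ecjv34} by the classical Fursikov--Imanuvilov conjugation method, adapted to the degenerate, non-autonomous operator as in \cite{DemarqueLimacoViana_deg_eq2018}. I would begin by setting $z = e^{s\varphi}v$, so that $z(\cdot,0) = z(\cdot,T) = 0$ since $\varphi(x,t) \to -\infty$ as $t \to 0^+$ or $t \to T^-$, while $z$ still vanishes on $\Sigma$. Substituting $v = e^{-s\varphi}z$ into \eqref{adjunto2} and multiplying by $e^{s\varphi}$ yields an identity of the form $P_1 z + P_2 z = e^{s\varphi}h$, where $P_1 z$ collects the terms that are symmetric under the relevant integrations by parts — essentially $b(t)(a z_x)_x$, the zeroth-order term $\sim s^2 b(t)\, a\, \varphi_x^2\, z$, and $-s\varphi_t z$ — while $P_2 z$ collects the skew-symmetric terms — $z_t$, the transport term $-2s\, b(t)\, a\, \varphi_x\, z_x$, and the divergence term $-s\, b(t)(a\varphi_x)_x\, z$. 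It is crucial that $a\varphi_x = \lambda\theta\eta\, x$ near $x=0$, so that the coefficients of $P_2$ degenerate in a controlled way at the degenerate endpoint.

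From $\|P_1 z\|_{L^2(Q)}^2 + \|P_2 z\|_{L^2(Q)}^2 + 2\langle P_1 z, P_2 z\rangle_{L^2(Q)} = \|e^{s\varphi}h\|_{L^2(Q)}^2$, the core of the argument is the expansion of the cross term $2\langle P_1 z, P_2 z\rangle$ by integrations by parts in $x$ and $t$. After collecting the contributions, the dominant ones are positive multiples of $(s\lambda)\int_Q e^{2s\varphi}\sigma a b^2 v_x^2$ and $(s\lambda)^2\int_Q e^{2s\varphi}\sigma^2 b^2 v^2$ — exactly the left-hand side of \eqref{ecjv34} — arising from the interaction of the transport term of $P_2$ with $(az_x)_x$ and with the zeroth-order term of $P_1$; this is where the convexity built into $\eta$ through the choice of $\Psi$ (so that $\Psi' = x/a > 0$ near $0$ and $\Psi' = -x/a < 0$ near $1$) enters. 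The remaining terms carry strictly lower powers of $s\lambda$, or are supported on a compact subset of $\omega'$ where $\Psi$ fails to be monotone; the latter I would absorb by introducing a cutoff supported in $\omega$ together with a Caccioppoli-type inequality bounding $\int_{\omega'}e^{2s\varphi}\sigma v_x^2$ by $C(s\lambda)^2\int_\omega e^{2s\varphi}\sigma^3 v^2$ plus an arbitrarily small multiple of the global gradient term. The boundary contributions at $x=1$ have the favourable sign because $\Psi'(1) = -1/a(1) < 0$, while those at $x=0$ all contain the factor $a(x)$ or $x/\sqrt{a(x)}$ and vanish thanks to $a(0)=0$, the weakly degenerate condition $xa'(x)\le Ka(x)$, and the boundedness of $x/\sqrt a$.

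To close, I would add $\|P_1 z\|^2 + \|P_2 z\|^2 \ge 0$ back, estimate the lower-order terms by the positive left-hand side, and choose first $\lambda$ large and then $s$ large. The terms coming from $\varphi_t$ and $\varphi_{tt}$ involve powers of $\theta$ that, for $s$ large, are dominated by $(s\lambda)^2\sigma^2$; the terms in which $b'(t)$ appears — a genuinely new feature compared with the autonomous case \cite{DemarqueLimacoViana_deg_eq2018} — are handled by the hypothesis $b'(t)/b(t) \le B$, which lets one absorb $b'$ into a constant times $b$. Undoing the substitution $v = e^{-s\varphi}z$ then gives \eqref{ecjv34}. I expect the principal obstacle to be exactly the bookkeeping of the time-dependent factor $b(t)$ through the cross-term expansion: since $b$ commutes with all the $x$-integrations but interacts with the $z_t$ component of $P_2 z$, one must track the extra $b'(t)$-contributions and verify, using $b'/b \le B$ together with the vanishing of $\theta^{-1}$ at $t = 0,T$, that they are genuinely lower order — this, together with the careful treatment of the degenerate boundary $x=0$, is the delicate part of the proof.
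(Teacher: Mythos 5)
Your outline follows the same route as the paper: the conjugation $w=e^{s\varphi}v$, the splitting into the symmetric part $L^{+}w=b(aw_x)_x-s\varphi_t w+s^2ab\varphi_x^2w$ and the skew part $L^{-}w=w_t-2sab\varphi_x w_x-sb(a\varphi_x)_x w$, the identity $\|L^{+}w\|^2+\|L^{-}w\|^2+2\langle L^{+}w,L^{-}w\rangle=\|e^{s\varphi}h\|^2$, the sign analysis of the boundary term at $x=1$ via $\Psi'(1)=-1/a(1)<0$, the absorption of the local gradient term on $\omega'$ by a Caccioppoli-type argument (multiplying the equation by $\lambda s e^{2s\varphi}\sigma v\chi$), and the treatment of the $\dot b$ terms through $|\dot b/b|\le B$. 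All of this matches the paper's Lemmas on $I_1,\dots,I_4$ and the final absorption step.

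There is, however, one concrete gap. You assert that the zeroth-order term $(s\lambda)^2\int_Q e^{2s\varphi}\sigma^2 b^2 v^2$ of the left-hand side of \eqref{ecjv34} ``arises from the interaction of the transport term of $P_2$ with \dots the zeroth-order term of $P_1$.'' That interaction (the paper's $I_3=s^3\int_Q ab^2\varphi_x(a\varphi_x^2)_x w^2$) does \emph{not} produce a zeroth-order term with the uniform weight $\sigma^2$: after using $\varphi_x=\lambda\psi_x\sigma$ and $a\psi_x(a\psi_x^2)_x=\pm\frac{x^2}{a^2}(2a-xa')$ it yields only $s^3\lambda^3\int_0^T\!\!\int_0^{\alpha'} b^2\frac{x^2}{a}\sigma^3w^2$ and $s^3\lambda^4\int_Q a^2b^2|\psi_x|^4\sigma^3w^2$, both of whose weights degenerate as $x\to0$ (since $x^2/a\sim x^{2-\alpha}$ and $a^2|\psi_x|^4=x^4/a^2$ near $0$). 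Recovering the uniform weight is a separate and necessary step: the paper writes
\begin{equation*}
s^2\lambda^2\int_Q b^2\sigma^2 w^2=\int_Q\Bigl(\lambda^{3/2}s^{3/2}b\sigma^{3/2}\tfrac{x}{\sqrt a}w\Bigr)\Bigl(\lambda^{1/2}s^{1/2}b\sigma^{1/2}\tfrac{\sqrt a}{x}w\Bigr)\le \tfrac12 s^3\lambda^3\int_Q b^2\sigma^3\tfrac{x^2}{a}w^2+\tfrac12 s\lambda\int_Q b^2\sigma\tfrac{a}{x^2}w^2,
\end{equation*}
and then controls the second integral by the gradient term via the Hardy--Poincar\'e inequality $\int_0^1\frac{a}{x^2}f^2\,dx\le C\int_0^1 a f_x^2\,dx$. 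The Hardy--Poincar\'e inequality, which appears nowhere in your plan, is also needed to handle the $\varphi_{tt}$ contribution of $I_1$. This is not a cosmetic omission: the absence of the factor $x^2/a$ on the left-hand side is precisely what distinguishes estimate \eqref{ecjv34_intro} from that of \cite{fragnelli_no_autonomo}, so the step that removes it has to be made explicit. The rest of your plan is sound and coincides with the paper's argument.
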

As usual, the proof of Proposition \ref{propojva1} relies on the change of variables $w=e^{2s\varphi}v$. Notice that
\begin{eqnarray*}
	v_t&=&e^{-s\varphi}(-s\varphi_t w+w_t),\\
	b(t)\left(a(x)v_x\right)_x&=&e^{-s\varphi}(b(a w_x)_x-sb(a\varphi_x)_x w-2sab\varphi_x w_x+s^2ab\varphi^2_x w).
\end{eqnarray*}

Then, from (\ref{adjunto2}), we obtain
\begin{equation}\label{adjunto22}
	\begin{cases}
		L^+ w+L^- w=e^{s\varphi}h, & \ \ \ \text{en} \ \ \ {Q}\\
		w=0, & \ \ \ \text{en} \ \ \ {\Sigma}\\
		w(x,0)=w(x,T)=0
	\end{cases}
\end{equation}
where
\begin{eqnarray*}
	L^+ w&=&b(a w_x)_x-s\varphi_t w+s^2ab\varphi^2_x w\\
	L^- w&=&w_t-2sab\varphi_x w_x-sb(a\varphi_x)_x w.
\end{eqnarray*}

Thus,
$$\|L^+ w\|^2+\|L^- w\|^2+2\langle L^+ w,L^- w \rangle=\|e^{s\varphi} h\|^2,$$
where $\|\cdot\|$ and $\langle \cdot,\cdot\rangle$ denote the norm and the inner product in $L^2(Q)$, respectively. The proof of our Carleman inequality follows using Lemmas \ref{lemaA1}-\ref{lemaA10}.

In the following inequalities $C$ denotes an arbitrary positive constant whose value could change from line to line. The dot in $\dot b$ means the derivative with respect to $t$.
\begin{lema}\label{lemaA1}	
    \begin{align*}
    \langle L^+ w,L^- w \rangle = &\frac{s}  {2}\int_{Q}\varphi_{tt}w^2+\frac{1}{2}\int_{Q}a\dot{b}w_x^2-\frac{s^2}{2}\int_{Q}a\dot{b} \varphi_x^2 w^2+s\int_{Q}ab^2 (a\varphi_x)_{xx}w w_x+s\int_{Q}ab^2(2(a \varphi_{x})_x - a_x \varphi_x )w_x^2\\
    &-2s^2\int_{Q}ab \varphi_x \varphi_{xt}w^2 + s^3\int_{Q}  ab^2 \varphi_x (a\varphi_x^2)_x w^2-s\int_{0}^{T}\left[ a^2\varphi_xw_x^2  \right]_{x=0}^{x=1}.
    \end{align*}
\end{lema}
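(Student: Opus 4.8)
The plan is to expand the $L^2(Q)$ inner product $\langle L^+ w,L^- w\rangle$ into the nine pieces $I_{ij}=\langle (L^+w)_i,(L^-w)_j\rangle$ coming from the three-term splittings $(L^+w)_1=b(aw_x)_x$, $(L^+w)_2=-s\varphi_t w$, $(L^+w)_3=s^2ab\varphi_x^2w$ and $(L^-w)_1=w_t$, $(L^-w)_2=-2sab\varphi_xw_x$, $(L^-w)_3=-sb(a\varphi_x)_xw$, and then to reduce each $I_{ij}$ by integration by parts. For the integrations by parts in $x$ I would use $w(0,t)=w(1,t)=0$ (which also force $w_t=0$ at $x=0,1$) together with the degeneracy $a(0)=0$; for those in $t$, the conditions $w(x,0)=w(x,T)=0$ carried by \eqref{adjunto22} (which force $w_x$, and in fact every derivative, to vanish at $t=0,T$).

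Term by term: $I_{11}$, after one integration by parts in $x$ (the boundary terms vanishing since $aw_x\in H^1(0,1)\subset C([0,1])$ and $w_t=0$ at both endpoints) and one in $t$, gives $\tfrac12\int_Q a\dot b\,w_x^2$. The terms $I_{21}$ and $I_{31}$ are pure time integrations by parts acting on $\tfrac12(w^2)_t$, producing $\tfrac{s}{2}\int_Q\varphi_{tt}w^2$ and $-\tfrac{s^2}{2}\int_Q a\dot b\varphi_x^2 w^2-s^2\int_Q ab\varphi_x\varphi_{xt}w^2$. The terms $I_{22}$ and $I_{23}$ are treated by integration by parts in $x$ on $\tfrac12(w^2)_x$ and on a product with $w$; their $b\varphi_t(a\varphi_x)_xw^2$ contributions cancel, leaving $-s^2\int_Q ab\varphi_x\varphi_{xt}w^2$ (which, together with the like term from $I_{31}$, yields the coefficient $-2s^2$). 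For $I_{12}$, writing $(aw_x)_x=a_xw_x+aw_{xx}$ and integrating $a^2b^2\varphi_x(w_x^2)_x$ by parts, the $aa_xb^2\varphi_xw_x^2$ terms cancel and one is left with $s\int_Q a^2b^2\varphi_{xx}w_x^2$ plus the single surviving boundary term, carried by the endpoint $x=1$ since $a(0)=0$; this is the last term of the statement. For $I_{13}$, moving $(aw_x)_x$ onto the rest by parts in $x$ gives $s\int_Q ab^2(a\varphi_x)_{xx}ww_x+s\int_Q ab^2(a\varphi_x)_xw_x^2$. Finally $I_{32}$ and $I_{33}$ are integrations by parts in $x$ on $\tfrac12(w^2)_x$ and a product with $w$, producing only $w^2$ terms.

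It then remains to regroup algebraically. The $w_x^2$ parts of $I_{12}$ and $I_{13}$ merge through the identity $a\varphi_{xx}+(a\varphi_x)_x=2(a\varphi_x)_x-a_x\varphi_x$ into $s\int_Q ab^2(2(a\varphi_x)_x-a_x\varphi_x)w_x^2$; the $w^2$ parts of $I_{22},I_{23},I_{31}$ combine into $-2s^2\int_Q ab\varphi_x\varphi_{xt}w^2-\tfrac{s^2}{2}\int_Q a\dot b\varphi_x^2w^2$; and, using $(a\varphi_x^2)_x=a_x\varphi_x^2+2a\varphi_x\varphi_{xx}$, the sum $I_{32}+I_{33}$ collapses to $s^3\int_Q ab^2\varphi_x(a\varphi_x^2)_xw^2$. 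Adding all surviving pieces, together with $\tfrac12\int_Q a\dot b w_x^2$ from $I_{11}$ and $\tfrac{s}{2}\int_Q\varphi_{tt}w^2$ from $I_{21}$, reproduces exactly the asserted identity.

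The real obstacle is not the bookkeeping but making the spatial integrations by parts legitimate at the degenerate endpoint $x=0$: quantities such as $a^2\varphi_xw_x^2$, $ab^2w_x(a\varphi_x)_xw$ and $a^2b^2\varphi_x^3w^2$ must be shown to be integrable up to $0$ and to have vanishing trace there. This is precisely where the construction of the weights is used — near $0$ one has $\Psi_x=x/a$, so $\varphi_x=\lambda\theta\eta\,x/a$, hence $a\varphi_x=\lambda\theta\eta\,x$ stays bounded, $a^2\varphi_x\to0$, and $(a\varphi_x)_x$ is bounded because $x^2/a\to0$ for $a\sim x^\alpha$ with $\alpha<1$; combined with $\sqrt a\,w_x\in L^2(0,1)$, $aw_x\in H^1(0,1)$ and the weakly degenerate bound $xa_x\le Ka$, every endpoint contribution at $x=0$ vanishes. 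The cleanest way to make this rigorous is to establish the identity first for $w$ smooth, compactly supported in $(0,T)$ in time and vanishing near $x=0$, and then to pass to the limit by density in the natural energy space associated with \eqref{adjunto2}; I would proceed along those lines.
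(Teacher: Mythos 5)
Your proposal is correct and follows essentially the same route as the paper: the paper also expands $\langle L^+w,L^-w\rangle$ into the products of the three pieces of $L^+w$ with the three pieces of $L^-w$ (merely grouping them into four blocks $I_1,\dots,I_4$ rather than nine $I_{ij}$), performs the same integrations by parts in $x$ and $t$, and regroups via the same identities, with the single surviving boundary term at $x=1$. Your closing remark on justifying the spatial integration by parts at the degenerate endpoint via a density argument is a sound extra precaution that the paper's (formal) computation does not spell out.
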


\begin{proof}
The idea is to express the inner product using integrals with integrands $w^2$ and $w_x^2$.
%in order to use one more time the change of variables defined above.
%la idea es expresar el producto interior en funcion de las integrales con terminos $w^2$, $w_x^2$ para volver utilizar el cambio de variable ya antes definida
\begin{align*}
    \langle L^+ w,L^- w \rangle= &\int_{Q}\left( b(a w_x)_x-s\varphi_t w+s^2ab\varphi^2_x w  \right)w_t+s^2\int_{Q} \varphi_t w \left( 2ab\varphi_x w_x+b(a\varphi_x)_x w  \right)\\
	&-s^3\int_{Q}  ab\varphi_x^2w\left( 2ab\varphi_x w_x+b(a\varphi_x)_x w  \right) -s\int_{Q}  b(a w_x)_x \left( 2ab\varphi_x w_x+b(a\varphi_x)_x w  \right) \\
    = &I_1 + I_2 + I_3 + I_4.
\end{align*}

For $I_1$, performing integration by parts in space and using the identities $w_xw_{xt}=\frac{1}{2}(w_x^2)_t, \ \  w w_t=\frac{1}{2}(w^2)_t$ we get
%utilizando integracion por partes en el espacio en el primer sumando y las identidades en los otros:  $w_xw_{xt}=\frac{1}{2}(w_x^2)_t, \ \  w w_t=\frac{1}{2}(w^2)_t$
$$I_1=\int_{Q}\left( b(a w_x)_x-s\varphi_t w+s^2ab\varphi^2_x w  \right)w_t=\int_{0}^{T}\left[ab w_xw_t\right]^{x=1}_{x=0}-\frac{1}{2}\int_{Q}ab (w_x^2)_{_t} -\frac{s}{2}\int_{Q} \varphi_t(w^2)_t+\frac{s^2}{2}\int_{Q}ab\varphi_x^2(w^2)_t. $$

Integrating by parts one more time and evaluating the boundary conditions we get
%integracion por partes en el tiempo 
%hspace*{-1.1cm} I_1=\int_{0}^{T}\left[ab w_xw_t\right]^{x=1}_{x=0}-\frac{1}{2}\int_{0}^{1}\left[abw_x^2\right]^{t=0}_{t=T}+\frac{1}{2}\int_{Q}a\dot{b} w_x^2-\frac{s}{2}\int_{0}^{1}\left[\varphi_tw^2\right]^{t=0}_{t=T}+\frac{s}{2}\int_{Q}\varphi_{tt} w^2+\frac{s^2}{2}\int_{0}^{1}\left[ab\varphi_x^2 w^2\right]^{t=0}_{t=T}-\frac{s^2}{2}\int_{Q}a(b\varphi_{x}^2)_t w^2$$
\begin{align*}
    I_1&=\int_{0}^{T}\left[ab w_xw_t\right]^{x=1}_{x=0}-\frac{1}{2}\int_{0}^{1}\left[abw_x^2\right]^{t=0}_{t=T}+\frac{1}{2}\int_{Q}a\dot{b} w_x^2+\frac{s}{2}\int_{Q}\varphi_{tt} w^2-\frac{s^2}{2}\int_{Q}a(b\varphi_{x}^2)_t w^2 \\
    &=\frac{1}{2}\int_{Q}a\dot{b} w_x^2 + \frac{s}{2}\int_{Q}\varphi_{tt} w^2-\frac{s^2}{2}\int_{Q}a\dot{b}\varphi_{x}^2 w^2+s^2\int_{Q}ab\varphi_{xt}\varphi_{x} w^2.    
\end{align*}

%$$I_1=\int_{0}^{T}\left[ab w_xw_t\right]^{x=1}_{x=0}-\frac{1}{2}\int_{0}^{1}\left[abw_x^2\right]^{t=0}_{t=T}+\frac{1}{2}\int_{Q}a\dot{b} w_x^2+\frac{s}{2}\int_{Q}\varphi_{tt} w^2-\frac{s^2}{2}\int_{Q}a(b\varphi_{x}^2)_t w^2$$
%$$\hspace*{-1.1cm} I_1=\frac{1}{2}\int_{Q}a\dot{b} w_x^2-\frac{s}{2}\int_{Q}\varphi_{tt} w^2-\frac{s^2}{2}\int_{Q}a(b\varphi_{x}^2)_t w^2$$
$$$$

For $I_2$ we have:
%$$I_2=s^2\int_{Q} \varphi_t w \left( 2ab\varphi_x w_x+b(a\varphi_x)_x w  \right)$$
\begin{align*}
	\int_{Q} \varphi_t w \left( 2ab\varphi_x w_x+b(a\varphi_x)_x w  \right)
	&=\int_{Q}2ab\varphi_t\varphi_x w_x w+\int_{0}^{T}\left[b\varphi_t  w^2a\varphi_x\right]^{x=1}_{x=0}-\int_{Q}(b\varphi_t  w^2)_x a\varphi_x\\
    &=\int_{Q}2ab\varphi_t\varphi_x w_x w-\int_{Q}b\varphi_{xt}  w^2 a\varphi_x-\int_{Q}b\varphi_{t}  2w w_x a\varphi_x.
\end{align*}

Thus, 
$$I_2=-s^2\int_{Q} ab\varphi_{xt}\varphi_x  w^2.$$

Similarly, for $I_3$,
%$$I_3=-s^3\int_{Q}  ab\varphi_x^2w\left( 2ab\varphi_x w_x+b(a\varphi_x)_x w  \right)$$
\begin{align*}
    \int_{Q}  ab\varphi_x^2w\left( 2ab\varphi_x w_x+b(a\varphi_x)_x w  \right) &=
    %\int_{Q} 2a^2b^2\varphi_x^3w_x w+\int_{Q} ab^2\varphi_x^2(a\varphi_x)_x w^2=
    \int_{Q} 2a^2b^2\varphi_x^3w_x w-\int_{0}^{T}\left[a^2b^2\varphi_x^2\varphi_x w^2\right]^{x=1}_{x=0}
	-\int_{Q} b^2 (a\varphi_x^2  w^2)_x a\varphi_x \\
    &=\int_{Q} 2a^2b^2\varphi_x^3w_x w-\int_{Q} b^2 a\varphi_x^2(  w^2)_x a\varphi_x-\int_{Q} ab^2\varphi_x (a\varphi_x^2)_x  w^2.
\end{align*}

Thus,
$$I_3=s^3\int_{Q} ab^2\varphi_x (a\varphi_x^2)_x  w^2$$

Finally, for $I_4$,
%$$I_4=-s\int_{Q}  b(a w_x)_x \left( 2ab\varphi_x w_x+b(a\varphi_x)_x w  \right) $$
\begin{multline*}
	\int_{Q}  b(a w_x)_x \left( 2ab\varphi_x w_x+b(a\varphi_x)_x w  \right) =\int_{Q}2ab^2\varphi_x w_x(a w_x)_x+\int_{Q}b^2(a\varphi_x)_x(a w_x)_x w \\
    %=\int_{0}^{T}\left[2a^2b^2\varphi_x w_x^2\right]^{x=1}_{x=0} -\int_{Q}2ab^2(a\varphi_x w_x)_xw_x+\int_{0}^{T}\left[b^2w(a\varphi_x)_x a w_x\right]^{x=1}_{x=0}-\int_{Q}b^2(w(a\varphi_x)_x)_xa w_x\\
	=\int_{0}^{T}\left[2a^2b^2\varphi_x w_x^2\right]^{x=1}_{x=0}-\int_{Q}2ab^2(a\varphi_x w_x)_xw_x-\int_{Q}b^2(w(a\varphi_x)_x)_xa w_x\\
	%=\int_{0}^{T}\left[2a^2b^2\varphi_x w_x^2\right]^{x=1}_{x=0}-\int_{Q}2ab^2(a\varphi_x )_xw_x^2-\int_{Q}2a^2b^2\varphi_x w_{xx} w_x-\int_{Q}ab^2(a\varphi_x)_x w_x^2-\int_{Q}ab^2(a\varphi_x)_{xx} w_x w\\
	=\int_{0}^{T}\left[2a^2b^2\varphi_x w_x^2\right]^{x=1}_{x=0}-\int_{Q}3ab^2(a\varphi_x )_xw_x^2-\int_{Q}a^2b^2\varphi_x (w_{x}^2)_x -\int_{Q}ab^2(a\varphi_x)_{xx} w_x w\\
	=\int_{0}^{T}\left[2a^2b^2\varphi_x w_x^2\right]^{x=1}_{x=0}-\int_{Q}3ab^2(a\varphi_x )_xw_x^2-\int_{0}^{T}\left[a^2b^2\varphi_x w_x^2\right]^{x=1}_{x=0}+\int_{Q}b^2 (a^2\varphi_x)_xw_{x}^2 -\int_{Q}ab^2(a\varphi_x)_{xx} w_x w
	\\
	=\int_{0}^{T}\left[a^2b^2\varphi_x w_x^2\right]^{x=1}_{x=0}-\int_{Q}3ab^2(a\varphi_x )_xw_x^2+\int_{Q}b^2 a(a\varphi_x)_xw_{x}^2 +\int_{Q}b^2 a_x a\varphi_xw_{x}^2-\int_{Q}ab^2(a\varphi_x)_{xx} w_x w
	\\
	=\int_{0}^{T}\left[a^2b^2\varphi_x w_x^2\right]^{x=1}_{x=0}-\int_{Q}2ab^2(a\varphi_x )_xw_x^2 +\int_{Q}b^2 a_x a\varphi_xw_{x}^2-\int_{Q}ab^2(a\varphi_x)_{xx} w_x w
\end{multline*}
Then,
$$I_4=s\int_{Q}ab^2(a\varphi_x)_{xx} w_x w+2s\int_{Q}ab^2(a\varphi_x )_xw_x^2-s\int_{Q}ab^2 a_x \varphi_xw_{x}^2-s\int_{0}^{T}\left[a^2b^2\varphi_x w_x^2\right]^{x=1}_{x=0}.$$
\end{proof}

\begin{lema}[For $I_1$]\label{lemaA2}
%$$\hspace*{-1.1cm} I_1=\frac{1}{2}\int_{Q}a\dot{b} w_x^2-\frac{s}{2}\int_{Q}\varphi_{tt} w^2-\frac{s^2}{2}\int_{Q}a\dot{b}\varphi_{x}^2 w^2-s^2\int_{Q}ab\varphi_{xt}\varphi_{x} w^2$$

$$- \frac{s^2}{2} \int_{Q}a\dot{b}\varphi_{x}^2 w^2 \geq - Cs^2\lambda^2\left[ \int_0^T \int_0^{\alpha'} b^2 \frac{x^2}{a} \sigma^3 w^2 + \int_0^T\int_{\omega'} b^2 \sigma^3 w^2 + \int_Q b^2 a^2 |\psi_x|^4 \sigma^3 w^2   \right]$$
\end{lema}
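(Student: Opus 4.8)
The plan is to estimate this single term directly, without any integration by parts: first compute $\varphi_x$ explicitly, then absorb $\dot b$ into a multiple of $b^2$, bump one power of $\sigma$, and finally split $\Omega=(0,1)$ into the three regions on which $\Psi$ has a different form.

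First I would record that from $\varphi=\theta(\eta-e^{3\lambda|\Psi|_\infty})$, $\eta=e^{\lambda(|\Psi|_\infty+\Psi)}$ and $\sigma=\theta\eta$ one has $\varphi_x=\theta\eta_x=\lambda\Psi_x\sigma$, hence $\varphi_x^2=\lambda^2\Psi_x^2\sigma^2$, so that
\[
-\frac{s^2}{2}\int_Q a\dot b\,\varphi_x^2 w^2=-\frac{s^2\lambda^2}{2}\int_Q a\dot b\,\Psi_x^2\sigma^2 w^2\ \ge\ -\frac{s^2\lambda^2}{2}\int_Q a\,|\dot b|\,\Psi_x^2\sigma^2 w^2 ,
\]
using that $a\Psi_x^2\sigma^2 w^2\ge 0$. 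Next, since $b\in W^{1,\infty}(0,T)$ and $b\ge b_0>0$, we have $|\dot b|\le\|b'\|_{L^\infty}\le(\|b'\|_{L^\infty}b_0^{-2})\,b^2$; and since $\theta(t)\ge(4/T^2)^4$ on $(0,T)$ and $\eta\ge 1$, we have $\sigma\ge\sigma_0>0$, whence $\sigma^2\le\sigma_0^{-1}\sigma^3$. Combining these two bounds gives
\[
-\frac{s^2}{2}\int_Q a\dot b\,\varphi_x^2 w^2\ \ge\ -Cs^2\lambda^2\int_Q b^2\, a\Psi_x^2\,\sigma^3 w^2 ,
\]
with $C$ independent of $s$ and $\lambda$.

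It then remains to split $Q=\big((0,\alpha')\cup(\alpha',\beta')\cup(\beta',1)\big)\times(0,T)$ and bound the factor $a\Psi_x^2$ on each piece. On $(0,\alpha')$ one has $\Psi_x=x/a$, so $a\Psi_x^2=x^2/a$, which yields the first term $\int_0^T\int_0^{\alpha'}b^2\frac{x^2}{a}\sigma^3 w^2$. On $[\alpha',\beta']$ the function $\Psi$ is merely $C^2$, so $\Psi_x$ is continuous there, and $a$ is continuous, hence $a\Psi_x^2\le C$; this yields $C\int_0^T\int_{\omega'}b^2\sigma^3 w^2$, since $\omega'=(\alpha',\beta')$. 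On $[\beta',1]$ one has $\Psi_x=-x/a$ and $a\ge a(\beta')>0$, so $a\Psi_x^2=x^2/a$ is bounded above and below by strictly positive constants; therefore $a\Psi_x^2\le C(a\Psi_x^2)^2=C a^2\Psi_x^4$, which yields $C\int_0^T\int_{\beta'}^1 b^2 a^2|\Psi_x|^4\sigma^3 w^2\le C\int_Q b^2 a^2|\Psi_x|^4\sigma^3 w^2$. Summing the three contributions gives the claimed inequality.

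I do not expect a serious obstacle: this is exactly the "$I_1$"-type bookkeeping inherited from the autonomous argument of \cite{DemarqueLimacoViana_deg_eq2018}, with the degenerate weight $x^2/a$ handled near $x=0$ precisely as there. The one point genuinely specific to the non-autonomous case is the estimate $|\dot b|\le Cb^2$, which is where the hypotheses $b\in W^{1,\infty}(0,T)$ and $b\ge b_0>0$ are used; and the only mild subtlety is that on the region $(\beta',1)$ the bound must be routed through the $a^2|\Psi_x|^4$ term rather than through an $x^2/a$ term, which is harmless because $x^2/a$ is bounded away from $0$ and from $\infty$ on that interval.
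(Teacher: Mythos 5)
Your proof is correct and follows essentially the same route as the paper: write $\varphi_x=\lambda\Psi_x\sigma$, absorb $\dot b$ into $Cb^2$ (you use $|\dot b|\le\|b'\|_{L^\infty}$ together with $b\ge b_0$, while the paper uses $|\dot b/b|\le C$ together with $b\ge m$ — equivalent under the stated hypotheses), bump $\sigma^2$ to $\sigma^3$ via the lower bound on $\sigma$, and split the domain into $(0,\alpha')$, $\omega'$, $(\beta',1)$ using $a\Psi_x^2=x^2/a$ near $0$, boundedness on $\omega'$, and $a\Psi_x^2\le Ca^2\Psi_x^4$ on $[\beta',1]$. Your justification of this last comparison (via $x^2/a$ being bounded above and below on $[\beta',1]$) is in fact slightly more careful than the paper's bare assertion.
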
	

\begin{proof}
Using the assumption $\left|\frac{\dot{b}(t)}{b(t)} \right|\leq C$, the property $\varphi_x=\lambda \psi_x \sigma$ and the bounds  $m=\min b(t)$, $M=\max b(t)$,  
%	Asumiendo que $\left|\frac{\dot{b}(t)}{b(t)} \right|\leq C$, sabemos que $\varphi_x=\lambda \psi_x \sigma$ y $m=\min b(t)$, $M=\max b(t)$
\begin{eqnarray*}
	\left|- \frac{1}{2} \int_{Q}a\dot{b}\varphi_{x}^2 w^2 \right|&\leq &\frac{1}{2}\int_{Q}a|\dot{b}||\varphi_{x}|^2 |w|^2\leq  C\int_{Q}a|b||\varphi_{x}|^2 |w|^2\leq  \frac{C}{m}\int_{Q}a|b|^2|\varphi_{x}|^2 |w|^2\\
	\left|- \frac{1}{2} \int_{Q}a\dot{b}\varphi_{x}^2 w^2 \right|&\leq & C\int_{Q}\lambda^2a|b|^2|\psi_{x}|^2 \sigma^2|w|^2\\
		\left|- \frac{1}{2} \int_{Q}a\dot{b}\varphi_{x}^2 w^2 \right|&\leq &C\lambda^2\left[ \int_{0}^{T} \int_{0}^{\alpha'} ab^2 |\psi_x|^2 \sigma^3 w^2 +  \int_{0}^{T} \int_{\omega'}ab^2 |\psi_x|^2 \sigma^3 w^2+\int_{0}^{T} \int_{\beta'}^{1}ab^2 |\psi_x|^2 \sigma^3 w^2   \right]
\end{eqnarray*}
We recall that  $a|\psi_x|^2$ is bounded in $\omega'$ and $\psi_x=\frac{x}{a}, \ \ x\in [0,\alpha']$ and  $a|\psi_x|^2\leq a^2|\psi_x|^4$ for $x\in[\beta',1]$. Thus,
%		donde sabemos que $a|\psi_x|^2$ es limitado en $\omega'$ y tambien tenemos que $\psi_x=\frac{x}{a}, \ \ x\in [0,\alpha']$ y \\ $a|\psi_x|^2\leq a^2|\psi_x|^4$ en $x\in[\beta',1]$
		\begin{eqnarray*}
			\left|- \frac{s^2}{2} \int_{Q}a\dot{b}\varphi_{x}^2 w^2 \right|&\leq &Cs^2\lambda^2\left[ \int_0^T \int_0^{\alpha'} b^2 \frac{x^2}{a} \sigma^3 w^2 + \int_0^T\int_{\omega'} b^2 \sigma^3 w^2 + \int_Q b^2 a^2 |\psi_x|^4 \sigma^3 w^2   \right].
	\end{eqnarray*}	
%    This implies the conclusion.
%$$- \frac{s^2}{2} \int_{Q}a\dot{b}\varphi_{x}^2 w^2 \geq - Cs^2\lambda^2\left[ \int_0^T \int_0^{\alpha'} b^2 \frac{x^2}{a} \sigma^3 w^2 + \int_0^T\int_{\omega'} b^2 \sigma^3 w^2 + \int_Q b^2 a^2 |\psi_x|^4 \sigma^3 w^2   \right]$$
\end{proof}

\begin{lema}[For $I_1$]\label{lemaA2_1}
$$\frac{1}{2}\int_{Q}a\dot{b} w_x^2 \geq -C\left[ \int_{0}^{T} \int_{0}^{\alpha'} ab^2\sigma w_x^2+  \int_{0}^{T} \int_{\omega'}b^2\sigma w_x^2+\int_{0}^{T} \int_{\beta'}^{1}a^2b^2\psi_{x}^2 \sigma w_x^2  \right]$$
\end{lema}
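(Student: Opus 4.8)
The plan is to follow the proof of Lemma~\ref{lemaA2} line for line, the only change being that the factor $\varphi_x^2=\lambda^2\psi_x^2\sigma^2$ multiplying $w^2$ there is replaced here by the coefficient $a(x)$ multiplying $w_x^2$. First I would discard the sign, writing $\frac{1}{2}\int_Q a\dot b\,w_x^2\ge -\frac{1}{2}\int_Q a|\dot b|\,w_x^2$, and then exploit the structural bound $\left|\dot b(t)/b(t)\right|\le C$ (which holds because $b\in W^{1,\infty}(0,T)$ and $b\ge b_0>0$) together with $b(t)\ge m$ to get $|\dot b|\le Cb\le (C/m)b^2$. Hence the problem reduces to estimating $\int_Q a\,b^2 w_x^2$ from above by the three terms on the right-hand side.

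Next I would insert the global weight $\sigma$. Since $\theta(t)=(t(T-t))^{-4}$ attains on $[0,T]$ the positive minimum $(4/T^2)^4$ and $\eta(x)=e^{\lambda(|\Psi|_\infty+\Psi)}\ge 1$ because $|\Psi|_\infty+\Psi\ge0$, the product $\sigma=\theta\eta$ is bounded below by a constant $c_0>0$ uniformly in $Q$. Therefore $a b^2 w_x^2\le c_0^{-1}a b^2\sigma w_x^2$ and $\frac{1}{2}\int_Q a\dot b\,w_x^2\ge -C\int_Q a b^2\sigma w_x^2$.

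It then remains to split $Q$ over the three intervals $(0,\alpha')$, $\omega'=(\alpha',\beta')$ and $(\beta',1)$. On $(0,\alpha')$ the integrand is already the first term of the right-hand side and is kept untouched. On $\omega'\subset(0,1]$ the continuous function $a$ is bounded, $a\le\|a\|_{C([0,1])}$, so $a b^2\sigma w_x^2\le C b^2\sigma w_x^2$, which gives the second term. On $[\beta',1]$ one uses, exactly as in Lemma~\ref{lemaA2}, that $\psi_x=-x/a$, whence $a\psi_x^2=x^2/a\ge (\beta')^2/\|a\|_{C([0,1])}=:c_1>0$; consequently $a=a\cdot1\le c_1^{-1}a^2\psi_x^2$ and $a b^2\sigma w_x^2\le c_1^{-1}a^2\psi_x^2 b^2\sigma w_x^2$, the third term. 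Adding the three bounds yields the claimed inequality.

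The argument is entirely routine; the only place that needs a little care is the degenerate endpoint $x=0$, where $a(x)\to0$ and $\psi_x=x/a(x)$ is singular — this is precisely why the contribution over $(0,\alpha')$ cannot be absorbed and must be left as it is on the right-hand side. I expect no genuine obstacle beyond keeping the uniform lower bound on $\sigma$ and the lower bound on $x^2/a$ on $[\beta',1]$ correctly in view.
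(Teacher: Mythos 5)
Your proposal is correct and follows essentially the same route as the paper: bound $|\dot b|\le Cb\le (C/m)b^2$, insert $\sigma$ using its uniform positive lower bound ($\theta\ge(4/T^2)^4$, $\eta\ge1$), then split over $(0,\alpha')$, $\omega'$, $(\beta',1)$ and use $a\le Ca^2\psi_x^2$ on the last interval. The only (harmless) difference is that the paper first performs a Young-type splitting with $\sigma^{1/2}$ and $\sigma^{-1/2}$ before invoking the lower bound on $\sigma$, whereas you insert $\sigma$ directly — your version is slightly more economical but mathematically the same argument.
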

\begin{proof}
Using analogous estimates as in Lemma \ref{lemaA2} and Young's inequality, we have 
	\begin{eqnarray*}
		\left| \frac{1}{2}\int_{Q}a\dot{b} w_x^2 \right|&\leq &\frac{1}{2}\int_{Q}a|\dot{b}| |w_x|^2\leq  C\int_{Q}a|b| |w_x|^2\leq  \frac{C}{m}\int_{Q}a|b|^2|w_x|^2\\
		\left|\frac{1}{2}\int_{Q}a\dot{b} w_x^2 \right|&\leq & C\int_{Q}\left(a^{1/2}b\sigma^{1/2} w_x  \right)\left( a^{1/2}b\sigma^{-1/2} w_x  \right)\leq C\left[\int_{Q}ab^2\sigma w_x^2+\int_{Q}ab^2\sigma^{-1} w_x^2   \right].
	\end{eqnarray*}	
    
Since $\theta(t)=\frac{1}{(t(T-t))^4}$, the minimum value is attained in $t=T/2$, therefore we have that $\theta(t)\geq \theta(T/2)=\left(\frac{4}{T^2}\right)^4$. By definition  $\eta(x)=e^{\lambda(|\psi|_\infty+\psi)}\geq 1$ then $\sigma(x,t)=\theta(t)\eta(x)\geq \theta(t)\geq \left(\frac{4}{T^2}\right)^4$.
%	Como $\theta(t)=\frac{1}{(t(T-t))^4}$ tenemos que el valor minimo alcanza en $t=T/2$ por tanto se tiene que\\ $\theta(t)\geq \theta(T/2)=\left(\frac{4}{T^2}\right)^4$, por definicion $\eta(x)=e^{\lambda(|\psi|_\infty+\psi)}\geq 1$ luego $\sigma(x,t)=\theta(t)\eta(x)\geq \theta(t)\geq \left(\frac{4}{T^2}\right)^4$\\
	$\sigma^{-1}\leq \left(\frac{T^2}{4}\right)^4$
	\begin{eqnarray*}
		\left|\frac{1}{2}\int_{Q}a\dot{b} w_x^2 \right|&\leq & C\left[\int_{Q}ab^2\sigma w_x^2+\int_{Q}ab^2\sigma^{-1} w_x^2   \right]\leq  C\left[\int_{Q}ab^2\sigma w_x^2+\left(\frac{T^2}{4}\right)^4\int_{Q}ab^2 w_x^2   \right]
\end{eqnarray*}		
$$\left|\frac{1}{2}\int_{Q}a\dot{b} w_x^2 \right|\leq C \int_{Q}ab^2\sigma w_x^2= C\left[ \int_{0}^{T} \int_{0}^{\alpha'} ab^2\sigma w_x^2+  \int_{0}^{T} \int_{\omega'}ab^2\sigma w_x^2+\int_{0}^{T} \int_{\beta'}^{1}ab^2\sigma w_x^2  \right]$$
On the other hand we have that $a\leq Ca^2\psi_{x}^2$ for $x\in (\beta',1)$, then
%tenemos que $a\leq Ca^2\psi_{x}^2$ en $x\in (\beta',1)$
$$\left|\frac{1}{2}\int_{Q}a\dot{b} w_x^2 \right|\leq C\left[ \int_{0}^{T} \int_{0}^{\alpha'} ab^2\sigma w_x^2+  \int_{0}^{T} \int_{\omega'}b^2\sigma w_x^2+\int_{0}^{T} \int_{\beta'}^{1}a^2b^2\psi_{x}^2 \sigma w_x^2  \right].$$
%$$\frac{1}{2}\int_{Q}a\dot{b} w_x^2 \geq -C\left[ \int_{0}^{T} \int_{0}^{\alpha'} ab^2\sigma w_x^2+  \int_{0}^{T} \int_{\omega'}b^2\sigma w_x^2+\int_{0}^{T} \int_{\beta'}^{1}a^2b^2\psi_{x}^2 \sigma w_x^2  \right]$$    
\end{proof}

\begin{lema}[For $I_1$]\label{lemaA2_2}
\begin{align*}
    \frac{s}{2}\int_{Q}\varphi_{tt} w^2 \geq -C &\left[  s^{1/2}\int_0^T \int_0^{\alpha'} ab^2\sigma  w_x^2 + s^{1/2}\int_0^T\int_{\omega'} b^2 \sigma w_x^2 + s^{1/2}\int_{0}^{T} \int_{\beta'}^{1} b^2 a^2 |\psi_x|^2 \sigma w_x^2\right.\\
	&\left.+s^2\lambda^2\int_0^T \int_0^{\alpha'} b^2 \frac{x^2}{a} \sigma^3 w^2 + s^2\lambda^2\int_0^T\int_{\omega'} b^2 \sigma^3 w^2 +s^2\lambda^2 \int_Q b^2 a^2 |\psi_x|^4 \sigma^3 w^2\right].
\end{align*}
\end{lema}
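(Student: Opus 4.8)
The plan is to differentiate the Carleman weight twice in time and then estimate the resulting space--time integral zone by zone, in the spirit of Lemmas \ref{lemaA2}--\ref{lemaA2_1}.

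First I would compute $\varphi_{tt}=\theta''(t)(\eta(x)-e^{3\lambda|\Psi|_\infty})$ and record the pointwise bound $0<\theta''(t)\le C\,\theta(t)^{3/2}$, which follows at once from $\theta(t)=(t(T-t))^{-4}$ and the lower bound $\theta(t)\ge(4/T^2)^4$ already used in Lemma \ref{lemaA2_1}; note that $3/2$ is a strictly smaller power of $\theta$ than the power $3$ carried by the leading weight $(s\lambda)^2\sigma^2$ in the estimate, which is what will ultimately allow this term to be absorbed. Since $\theta''>0$ and, for $\lambda$ large, $\eta-e^{3\lambda|\Psi|_\infty}<0$, the term $\frac{s}{2}\int_Q\varphi_{tt}w^2$ is nonpositive, so it suffices to bound $\frac{s}{2}\int_Q|\varphi_{tt}|w^2\le Cs\int_Q\theta^{3/2}(e^{3\lambda|\Psi|_\infty}-\eta)w^2$ from above.

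Then I would split $\Omega$ into the zones $(0,\alpha')$, $\omega'=(\alpha',\beta')$, $(\beta',1)$ on which $\psi$, hence $\eta$ and $\psi_x$, have their fixed explicit form, and treat each separately. On $\omega'$, where $a$ and $\psi_x$ are bounded above and below and $\eta\ge1$, one has $\theta^{3/2}(e^{3\lambda|\Psi|_\infty}-\eta)\le C\sigma^3$, so (using $\theta\ge(4/T^2)^4$ and $b^2\ge b_0^2$) this zone contributes at most $Cs\int_{\omega'}\sigma^3w^2\le Cs^2\lambda^2\int_{\omega'}b^2\sigma^3w^2$. On $(0,\alpha')$, where $\psi_x=x/a$, $\sigma_x=\lambda\psi_x\sigma=\lambda\frac xa\sigma$ and $\eta$ is increasing with $\eta(0)^3=e^{3\lambda|\Psi|_\infty}$, one again has $\theta^{3/2}(e^{3\lambda|\Psi|_\infty}-\eta)\le C\sigma^3$, and I would integrate by parts in $x$ in $\int_0^{\alpha'}\sigma^3w^2$, using $(\sigma^3)_x=3\lambda\frac xa\sigma^3$ and Young's inequality with a free parameter, so as to produce a $w_x^2$-contribution with weight $ab^2\sigma$ and a $w^2$-contribution with weight $b^2\frac{x^2}{a}\sigma^3$. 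The boundary term at $x=0$ vanishes since $\frac{a(x)}{x}w(x)^2\to0$ as $x\to0$ (a consequence of $\int_0^x\frac{dy}{a(y)}\le\frac1{1-K}\frac{x}{a(x)}$, itself obtained from $xa'\le Ka$ and $\frac y{a(y)}\to0$), and the boundary term at $x=\alpha'$ has the form $C\,w(\alpha',t)^2\sigma^3(\alpha',t)$, which after a cutoff is absorbed into the $\omega'$-contribution. On $(\beta',1)$, where $\psi_x=-x/a$ and $a$ are bounded away from $0$, I would argue similarly, now using the ordinary Poincar\'e inequality near $x=1$ (where $w(1)=0$) and the identities $a^2\psi_x^2=x^2$, $a^2\psi_x^4=x^4/a^2$, to get a $w_x^2$-contribution with weight $b^2a^2|\psi_x|^2\sigma$ and a $w^2$-contribution with weight $b^2a^2|\psi_x|^4\sigma^3$. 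Summing the three zones and enlarging the domain of the last $w^2$ term to $Q$ yields the statement.

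The main obstacle is the large factor $e^{3\lambda|\Psi|_\infty}$ carried by $\varphi_{tt}$: near $x=0$ and near $x=1$ it is comparable, up to bounded factors, to $\eta^3=\theta^{-3}\sigma^3$, so a naive pointwise comparison only gives $\int\sigma^3w^2$, whereas the right-hand side carries the weaker weights $\frac{x^2}{a}\sigma^3$ (which degenerates at $x=0$) and $a^2|\psi_x|^4\sigma^3$. It is the integration by parts exploiting $(\sigma^3)_x=3\lambda\psi_x\sigma^3$ that must do the work of shifting the surplus weight onto a derivative term, and the delicate point is to keep track of the powers of $s$, $\lambda$ and above all of $\sigma$ so that the $w_x^2$-weight comes out proportional to $\sigma$ (and the $w^2$-weight to $\sigma^3$) rather than to a higher power; the Hardy-type estimate near $x=0$ and the Poincar\'e inequality near $x=1$ are used only to make the boundary terms in these integrations by parts vanish or be absorbable.
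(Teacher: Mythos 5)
Your preliminary observations are sound (the sign of $\varphi_{tt}$, the bound $|\theta''|\le C\theta^{3/2}$, the zone decomposition, the direct treatment of $\omega'$), but the core of the argument on $(0,\alpha')$ does not close. You degrade the pointwise bound to $|\varphi_{tt}|\le C\sigma^{3}$ and then ask the integration by parts based on $(\sigma^3)_x=3\lambda\frac{x}{a}\sigma^3$ to convert $s\int_0^{\alpha'}\sigma^3w^2$ into $s^{1/2}\int ab^2\sigma w_x^2+s^2\lambda^2\int b^2\frac{x^2}{a}\sigma^3w^2$. That intermediate goal is already false: testing with $w(x)=\phi(x/x_0)$, $\phi$ a fixed bump supported in $[1,2]$ and $a(x)=x^\alpha$, the left side scales like $s\theta^3x_0$ while the right side scales like $s^{1/2}\theta x_0^{\alpha-1}+s^2\lambda^2\theta^3x_0^{3-\alpha}$, and for fixed $s,\lambda$ the required inequality $1\le Cs\lambda^2 x_0^{2-\alpha}$ fails as $x_0\to0$ (take $t$ near $0$ or $T$ so the $\theta^3$ terms dominate). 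Concretely, the integration by parts produces the term $\frac{1}{3\lambda}\int\sigma^3\frac{a-xa'}{x^2}w^2\ge\frac{1-K}{3\lambda}\int\sigma^3\frac{a}{x^2}w^2$, which is \emph{larger} than what you started with near $x=0$, and any Young splitting of the cross term $\int\sigma^3\frac{a}{x}|w||w_x|$ leaves a $w_x^2$-weight carrying $\sigma^3$ (or worse), never the first power $\sigma$ demanded by the statement. You correctly flag the bookkeeping of powers of $\sigma$ as the delicate point, but the mechanism you propose cannot resolve it.

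The missing idea --- and the paper's actual proof --- is to keep the gain you computed at the outset, namely $|\varphi_{tt}|\le C\sigma^{3/2}$ on all of $Q$, and to distribute it by Young's inequality \emph{before} any integration by parts:
$$ s\,\sigma^{3/2}b^2w^2=\left(s^{1/4}b\,\sigma^{1/2}\frac{a^{1/2}}{x}w\right)\left(s^{3/4}b\,\sigma\frac{x}{a^{1/2}}w\right)\le \frac12 s^{1/2}b^2\sigma\frac{a}{x^2}w^2+\frac12 s^{3/2}b^2\sigma^2\frac{x^2}{a}w^2 .$$
The Hardy--Poincar\'e inequality is then applied to the first piece to produce $s^{1/2}\int ab^2\sigma w_x^2$ --- it is the central tool here, not a device for killing boundary terms, contrary to your closing remark --- while the second piece is bounded by $s^2\lambda^2\int b^2\frac{x^2}{a}\sigma^3w^2$ using $\sigma\ge(4/T^2)^4$ and $s^{3/2}\le s^2$. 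The zone decomposition and the pointwise relations $a\le Ca^2|\psi_x|^2$ and $\frac{x^2}{a}\le Ca^2|\psi_x|^4$ on $[\beta',1]$ then give the stated form. I recommend you rebuild the proof around this splitting.
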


\begin{proof}
%Estimate for the second integral, 
Using that $|\varphi_{tt}|\leq C\sigma^{3/2}$:
%Estimativa para la segunda integral, sabiendo que $|\varphi_{tt}|\leq C\sigma^{3/2}$
\begin{align*}
    \left|-\frac{s}{2}\int_{Q}\varphi_{tt} w^2 \right| &\leq \frac{s}{2}\int_{Q}|\varphi_{tt}| |w|^2\leq \frac{s}{2m^2}\int_{Q}|\varphi_{tt}||b|^2 |w|^2 \leq Cs\int_{Q}\sigma^{3/2}|b|^2 |w|^2 \\
    &\leq C\int_{Q} \left(s^{1/4}b\sigma^{1/2}\frac{a^{1/2}}{x} w  \right)\left( s^{3/4}b\sigma^{}\frac{x}{a^{1/2}} w \right)
\leq C\int_{Q}s^{1/2}b^2\sigma\frac{a}{x^2} w+C\int_{Q}s^{3/2}b^2\sigma^2\frac{x^2}{a} w.
\end{align*}
%$$\left|-\frac{s}{2}\int_{Q}\varphi_{tt} w^2 \right|\leq \frac{s}{2}\int_{Q}|\varphi_{tt}| |w|^2\leq \frac{s}{2m^2}\int_{Q}|\varphi_{tt}||b|^2 |w|^2 \leq Cs\int_{Q}\sigma^{3/2}|b|^2 |w|^2$$
%	\begin{eqnarray*}
%	\left|-\frac{s}{2}\int_{Q}\varphi_{tt} w^2 \right|&\leq & C\int_{Q} \left(s^{1/4}b\sigma^{1/2}\frac{a^{1/2}}{x} w  \right)\left( s^{3/4}b\sigma^{}\frac{x}{a^{1/2}} w \right)\\
%	&\leq &C\int_{Q}s^{1/2}b^2\sigma\frac{a}{x^2} w+C\int_{Q}s^{3/2}b^2\sigma^2\frac{x^2}{a} w
%\end{eqnarray*}	

Applying the Hardy-Poincaré inequality, 
%Aplicando la desigualdad de Hardy-Poincare
	\begin{eqnarray*}
	\left|-\frac{s}{2}\int_{Q}\varphi_{tt} w^2 \right| \leq Cs^{1/2}\int_{Q}ab^2\sigma  w_x^2+Cs^{3/2}\int_{Q}b^2\sigma^2\frac{x^2}{a} w
	%	\left|-\frac{s}{2}\int_{Q}\varphi_{tt} w^2 \right|&
        \leq Cs^{1/2}\int_{Q}ab^2\sigma  w_x^2+Cs^2\lambda^2\int_{Q}b^2\sigma^3\frac{x^2}{a} w.
\end{eqnarray*}	

Decomposing the spatial domain into the intervals $[0,\alpha'], \omega' $ and $[\beta',1]$ and taking into account the relations
%Descomponiendo el dominio espacial en los intervalos $[0,\alpha'], \omega' $ y $[\beta',1]$ y tomando en cuenta las relaciones 
$$a\leq C a^2 |\psi_x|^2,  \ \ \frac{x^2}{a}\leq Ca^2|\psi_x|^4, \ \ x\in[\beta',1], $$
we get our estimate:
%obtenemos la estimativa para la ultima integral 
\begin{multline*}
	\left|-\frac{s}{2}\int_{Q}\varphi_{tt} w^2 \right|\leq C\left[ s^{1/2}\int_0^T \int_0^{\alpha'} ab^2\sigma  w_x^2 + s^{1/2}\int_0^T\int_{\omega'} b^2 \sigma w_x^2 + s^{1/2}\int_{0}^{T} \int_{\beta'}^{1} b^2 a^2 |\psi_x|^2 \sigma w_x^2\right.\\
	\left.+s^2\lambda^2\int_0^T \int_0^{\alpha'} b^2 \frac{x^2}{a} \sigma^3 w^2 + s^2\lambda^2\int_0^T\int_{\omega'} b^2 \sigma^3 w^2 +s^2\lambda^2 \int_Q b^2 a^2 |\psi_x|^4 \sigma^3 w^2\right].
\end{multline*}
\end{proof}

\begin{lema}[for $I_2$]\label{lemaA3}
	$$I_2=-s^2\int_{Q} ab\varphi_{xt}\varphi_x  w^2 \geq
	- C s^2 \lambda^2 \left( \int_0^T \int_0^{\alpha'} b^2 \frac{x^2}{a} \sigma^3 w^2 + \int_0^T\int_{\omega'} b^2 \sigma^3 w^2 + \int_Q b^2 a^2 |\psi_x|^4 \sigma^3 w^2 \right).$$
\end{lema}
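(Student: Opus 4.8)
The plan is to follow the proof of Lemma \ref{lemaA2} almost verbatim, the only new ingredient being a pointwise bound on the mixed derivative $\varphi_{xt}$. Since $I_2$ has no definite sign, I would first write $I_2 \ge -|I_2|$ and estimate
\[
|I_2| \;\le\; s^2 \int_Q a\,|b|\,|\varphi_{xt}|\,|\varphi_x|\,w^2 \;\le\; \frac{C}{m}\, s^2 \int_Q a\,b^2\,|\varphi_{xt}|\,|\varphi_x|\,w^2 ,
\]
where $b \ge m > 0$ was used to replace $|b|$ by $b^2$, exactly as in Lemma \ref{lemaA2}.

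Next I would record the required weight estimates. From $\varphi_x = \lambda \psi_x \sigma$ one gets $\varphi_{xt} = \lambda \psi_x \sigma_t = \lambda \psi_x \theta'(t)\eta$; since $\theta(t) = (t(T-t))^{-4}$ satisfies $|\theta'| \le C T\, \theta^{5/4}$ and $\eta \ge 1$, this gives $|\varphi_{xt}| \le C\lambda |\psi_x|\, \theta^{5/4}\eta \le C\lambda |\psi_x|\, \sigma^{5/4}$. Combined with $|\varphi_x| = \lambda|\psi_x|\sigma$ and the lower bound $\sigma \ge (4/T^2)^4$ (so that $\sigma^{9/4} \le C\sigma^3$, with $C$ depending only on $T$), we obtain the pointwise estimate $|\varphi_{xt}\varphi_x| \le C\lambda^2 |\psi_x|^2 \sigma^3$. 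Inserting this above yields
\[
|I_2| \;\le\; C\, s^2 \lambda^2 \int_Q a\, b^2\, |\psi_x|^2\, \sigma^3\, w^2 .
\]

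Finally I would split the spatial integral into $[0,\alpha']$, $\omega'$ and $[\beta',1]$ and use, exactly as in Lemma \ref{lemaA2}: on $[0,\alpha']$ one has $\psi_x = x/a$, hence $a|\psi_x|^2 = x^2/a$; on $\omega'$ the quantity $a|\psi_x|^2$ is bounded; and on $[\beta',1]$ one has $a|\psi_x|^2 \le C\, a^2 |\psi_x|^4$, since there $a|\psi_x|^2 = x^2/a$ is bounded below by a positive constant. Enlarging the last domain of integration from $[\beta',1]$ back to all of $\Omega$ then produces precisely the three terms on the right-hand side of the claim, and $I_2 \ge -|I_2|$ finishes the proof.

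The only step that requires any care beyond what was already done in Lemma \ref{lemaA2} is the bound $|\varphi_{xt}| \le C\lambda |\psi_x|\sigma^{5/4}$, that is, checking $|\theta'| \le C\theta^{5/4}$ and then verifying that the resulting power $\sigma^{9/4}$ collapses to $\sigma^3$ by means of the lower bound on $\sigma$; everything else is identical, line by line, to the proof of Lemma \ref{lemaA2}.
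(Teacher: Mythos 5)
Your proposal is correct and follows essentially the same route as the paper: bound $|I_2|$ using $b\le b^2/m$, exploit $\varphi_x=\lambda\psi_x\sigma$ and $\varphi_{xt}=\lambda\psi_x\dot\theta\eta$ together with $|\dot\theta|\le C\theta^{5/4}$ and the lower bound on $\sigma$ to reduce to $Cs^2\lambda^2\int_Q ab^2|\psi_x|^2\sigma^3w^2$, then split the domain into $[0,\alpha']$, $\omega'$, $[\beta',1]$ exactly as in Lemma~\ref{lemaA2}. Your justification that $a|\psi_x|^2\le Ca^2|\psi_x|^4$ on $[\beta',1]$ (via the positive lower bound of $x^2/a$ there) is in fact slightly more careful than the paper's, which states the inequality without the constant.
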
	
\begin{proof}
    We know that $\varphi_x=\theta \lambda \psi_x \eta$ and $\varphi_{xt}=\dot{\theta}\lambda \psi_x \eta$. Moreover, we have $m \leq b(t)$, where $m$ is the minimum value of $b(t)$. 

	\begin{eqnarray*}
		\left| -s^2\int_{Q} ab\varphi_{xt}\varphi_x  w^2 \right|	&\leq &s^2\lambda^2 \int_{Q} ab|\theta \dot{\theta}| |\psi_x|^2 \eta^2 w^2\\
		&\leq & \frac{C}{m}s^2\lambda^2 \int_{Q} ab^2 |\psi_x|^2 \sigma^3 w^2\\
		&\leq & Cs^2\lambda^2\left[ \int_{0}^{T} \int_{0}^{\alpha'} ab^2 |\psi_x|^2 \sigma^3 w^2 +  \int_{0}^{T} \int_{\omega'}ab^2 |\psi_x|^2 \sigma^3 w^2+\int_{0}^{T} \int_{\beta'}^{1}ab^2 |\psi_x|^2 \sigma^3 w^2 \right]
	\end{eqnarray*}
    
	Since $a|\psi_x|^2$ is bounded in $\omega'$ and we have $\psi_x=\frac{x}{a}, \ \ x\in [0,\alpha']$ and  $a|\psi_x|^2\leq a^2|\psi_x|^4$ in $x\in[\beta',1]$, 
	\begin{eqnarray*}
		\left| -s^2\int_{Q} ab\varphi_{xt}\varphi_x  w^2 \right|	&\leq & Cs^2\lambda^2\left[ \int_{0}^{T} \int_{0}^{\alpha'} b^2 \frac{x^2}{a} \sigma^3 w^2 +  \int_{0}^{T} \int_{\omega'}b^2  \sigma^3 w^2+\int_{0}^{T} \int_{\beta'}^{1}a^2b^2 |\psi_x|^4 \sigma^3 w^2 \right].
	\end{eqnarray*}
%	$$-s^2\int_{Q} ab\varphi_{xt}\varphi_x  w^2 	\geq  -Cs^2\lambda^2\left[ \int_{0}^{T} \int_{0}^{\alpha'} b^2 \frac{x^2}{a} \sigma^3 w^2 +  \int_{0}^{T} \int_{\omega'}b^2  \sigma^3 w^2+\int_{0}^{T} \int_{\beta'}^{1}a^2b^2 |\psi_x|^4 \sigma^3 w^2 \right]$$
\end{proof}

\begin{lema}[For $I_3$]\label{lemaA4}
	$$I_3=s^3\int_{Q} ab^2\varphi_x (a\varphi_x^2)_x  w^2\geq
	C s^3 \lambda^3 \int_0^T \int_0^{\alpha'} b^2 \frac{x^2}{a} \sigma^3 w^2 - Cs^3\lambda^3 \int_0^T\int_{\omega'} b^2 \sigma^3 w^2 + C s^3 \lambda^4 \int_Q b^2 a^2 |\psi_x|^4 \sigma^3 w^2.$$
\end{lema}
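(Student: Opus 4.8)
The plan is to expand the weight derivatives in terms of $\psi$, $\sigma$ and $\lambda$, isolate the manifestly positive top‑order contribution, and estimate the remainder by splitting $\Omega$ into the three regions $[0,\alpha']$, $\omega'$ and $[\beta',1]$, exactly as in the previous lemmas.

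First I would use the identities $\varphi_x=\lambda\psi_x\sigma$ and $\sigma_x=\lambda\psi_x\sigma$ to compute
\[
(a\varphi_x^2)_x=\lambda^2(a\psi_x^2)_x\sigma^2+2\lambda^3a\psi_x^3\sigma^2,
\qquad
\varphi_x(a\varphi_x^2)_x=\lambda^3\psi_x(a\psi_x^2)_x\sigma^3+2\lambda^4a\psi_x^4\sigma^3 ,
\]
so that
\[
I_3=s^3\lambda^3\int_Q ab^2\,\psi_x(a\psi_x^2)_x\,\sigma^3w^2
+2s^3\lambda^4\int_Q a^2b^2\psi_x^4\sigma^3w^2 .
\]
The second integral is nonnegative and is already of the desired form $Cs^3\lambda^4\int_Q b^2a^2|\psi_x|^4\sigma^3w^2$; I keep it in reserve to absorb a lower‑order term produced below.

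Next I would estimate the first integral region by region. On $[0,\alpha']$ one has $\psi_x=x/a$, hence $a\psi_x^2=x^2/a$ and
\[
a\,\psi_x(a\psi_x^2)_x=\frac{2x^2}{a}-\frac{x^3a'}{a^2}\ \ge\ (2-K)\,\frac{x^2}{a}\ >\ 0 ,
\]
where the structural hypothesis $xa'\le Ka$ with $K\in[0,1)$ is exactly what is used; this yields the positive term $Cs^3\lambda^3\int_0^T\int_0^{\alpha'}b^2\frac{x^2}{a}\sigma^3w^2$. On $\omega'\subset\subset(\alpha',\beta')$ the functions $a,\psi_x,\psi_{xx}$ are bounded and $a$ is bounded below by a positive constant, so the integrand is controlled by $Cb^2\sigma^3w^2$, producing the term $-Cs^3\lambda^3\int_0^T\int_{\omega'}b^2\sigma^3w^2$. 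On $[\beta',1]$ one has $\psi_x=-x/a$, so again $a\psi_x^2=x^2/a$, and $|a\psi_x(a\psi_x^2)_x|\le 2x^2/a\le Ca^2|\psi_x|^4$ because $a/x^2$ is bounded there; since the resulting integrand is nonnegative, this contribution is bounded below by $-Cs^3\lambda^3\int_Q b^2a^2|\psi_x|^4\sigma^3w^2$.

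Finally I would combine the three pieces with the reserved integral, obtaining
\[
I_3\ \ge\ Cs^3\lambda^3\int_0^T\!\int_0^{\alpha'}b^2\frac{x^2}{a}\sigma^3w^2
-Cs^3\lambda^3\int_0^T\!\int_{\omega'}b^2\sigma^3w^2
+\bigl(2\lambda-C\bigr)s^3\lambda^3\int_Q b^2a^2|\psi_x|^4\sigma^3w^2 ,
\]
and for $\lambda\ge\lambda_0$ with $\lambda_0$ large enough that $2\lambda-C\ge\lambda$, the last coefficient is at least $s^3\lambda^4$, which is the claimed estimate. The only genuinely delicate point is the sign of $a\psi_x(a\psi_x^2)_x$ near the degeneracy $x=0$: it is precisely $K<1$ (i.e. $2-K>0$) that makes the $[0,\alpha']$ contribution positive rather than merely bounded; everything else is the same bookkeeping and $\lambda$‑absorption already used throughout this section.
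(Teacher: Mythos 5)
Your proposal is correct and follows essentially the same route as the paper: expand $\varphi_x(a\varphi_x^2)_x$ into the $\lambda^3$ and $\lambda^4$ pieces, use $a\psi_x(a\psi_x^2)_x=\pm\frac{x^2}{a^2}(2a-xa')$ with $xa'\le Ka$, $K<1$, to get positivity on $[0,\alpha']$, bound the $\omega'$ and $[\beta',1]$ contributions, and absorb the latter into the $\lambda^4$ term for $\lambda$ large. Your write-up is in fact slightly more careful than the paper's (which contains the typo ``$xa'\ge Ka$'' and a sign slip on the $\omega'$ term), so nothing further is needed.
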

\begin{proof}
	\begin{eqnarray*}
		\int_{Q} ab^2\varphi_x (a\varphi_x^2)_x  w^2&=&\int_{Q} ab^2 \varphi_x (a_x \varphi_x^2+2a\varphi_x\varphi_{xx})  w^2=\int_{Q} ab^2 \lambda\psi_x\sigma (a_x \lambda^2\psi_x^2\sigma^2+2a\lambda\psi_x\sigma(\lambda \psi_{xx}\sigma+\lambda^2\psi_x^2\sigma))  w^2\\
		&=&\int_{Q} ab^2  (\lambda^3\sigma^3a_x \psi_x^3+2a\lambda^3\sigma^3\psi_x^2\psi_{xx}+2a\lambda^4\sigma^3\psi_x^4)  w^2\\
		&=&\int_{Q} ab^2  \lambda^3\sigma^3(a_x \psi_x^3+2a\psi_x^2\psi_{xx})w^2+\int_{Q} 2a^2b^2\lambda^4\sigma^3\psi_x^4  w^2\\
		&=&\lambda^3\int_{Q} ab^2 \psi_x (a\psi_x^2)_x\sigma^3w^2+2\lambda^4\int_{Q} a^2b^2\sigma^3\psi_x^4  w^2
	\end{eqnarray*}

    For the first integral in the right-hand side: 
	\begin{eqnarray*}
		\lambda^3\int_{Q} ab^2 \psi_x (a\psi_x^2)_x\sigma^3w^2 = \lambda^3 \left[ \int_0^T \int_0^{\alpha'} ab^2 \psi_x (a\psi_x^2)_x\sigma^3w^2 +  \int_0^T\int_{\omega'} ab^2 \psi_x (a\psi_x^2)_x\sigma^3w^2 +  \int_0^T \int_{\beta'}^1 ab^2 \psi_x (a\psi_x^2)_x\sigma^3w^2  \right]
	\end{eqnarray*}
	
	By the definition of $\psi(x)$ we have
	$$a\psi_x (a\psi_x^2)_x=\begin{cases}
		\frac{x^2}{a^2}(2a-xa'), \ \ & \ \ x\in(0,\alpha')\\
		-\frac{x^2}{a^2}(2a-xa'), \ \ & \ \ x\in(\beta',1)
	\end{cases}$$
	\begin{eqnarray*}
		\hspace*{-0.5cm}\lambda^3\int_{Q} ab^2 \psi_x (a\psi_x^2)_x\sigma^3w^2=\lambda^3 \left[\int_0^T \int_0^{\alpha'} b^2 \frac{x^2}{a^2}(2a-xa')\sigma^3w^2 +  \int_0^T\int_{\omega'} ab^2 \psi_x (a\psi_x^2)_x\sigma^3w^2 - \int_0^T \int_{\beta'}^1 b^2 \frac{x^2}{a^2}(2a-xa')\sigma^3w^2 \right]
	\end{eqnarray*}
	
    By the condition $xa'\geq Ka$ we have $(2a-xa')\geq 2a-Ka\geq a$, and that $\psi$ is bounded in $\omega'$,
	\begin{eqnarray*}
		\hspace*{-0.5cm}\lambda^3\int_{Q} ab^2 \psi_x (a\psi_x^2)_x\sigma^3w^2 \geq \lambda^3 \left[\int_0^T \int_0^{\alpha'} b^2 \frac{x^2}{a}\sigma^3w^2 +  \int_0^T\int_{\omega'} b^2 \sigma^3w^2 -  \int_0^T \int_{\beta'}^1 b^2 \frac{x^2}{a}\sigma^3w^2\right],
	\end{eqnarray*}
    which implies that
	\begin{equation*}
		\int_{Q} ab^2\varphi_x (a\varphi_x^2)_x  w^2\geq\lambda^3 \left[\int_0^T \int_0^{\alpha'} b^2 \frac{x^2}{a}\sigma^3w^2 +  \int_0^T\int_{\omega'} b^2 \sigma^3w^2 -  \int_0^T \int_{\beta'}^1 b^2 \frac{x^2}{a}\sigma^3w^2\right]+2\lambda^4\int_{Q} a^2b^2\sigma^3\psi_x^4  w^2.
	\end{equation*}
	Choosing an appropriate constant in the last inequality, one has: 
	$$s^3\int_{Q} ab^2\varphi_x (a\varphi_x^2)_x  w^2\geq
	C s^3 \lambda^3 \int_0^T \int_0^{\alpha'} b^2 \frac{x^2}{a} \sigma^3 w^2 - Cs^3\lambda^3 \int_0^T\int_{\omega'} b^2 \sigma^3 w^2 + C s^3 \lambda^4 \int_Q b^2 a^2 |\psi_x|^4 \sigma^3 w^2.$$
\end{proof}

The following lemmas aim to estimate the integral:
\begin{equation}
\label{eq:int_I4}
I_4=s\int_{Q}ab^2(a\varphi_x)_{xx} w_x w+2s\int_{Q}ab^2(a\varphi_x )_xw_x^2-s\int_{Q}ab^2 a_x \varphi_xw_{x}^2-s\int_{0}^{T}\left[a^2b^2\varphi_x w_x^2\right]^{x=1}_{x=0}.
\end{equation}

\begin{lema}[For $I_4$ in \eqref{eq:int_I4}]\label{lemaA5}
	\begin{equation*}
		-s \int_{0}^{T}\left[2a^2b^2\varphi_x w_x^2\right]^{x=1}_{x=0} \geq 0.
	\end{equation*}
\end{lema}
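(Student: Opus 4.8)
The plan is to compute the boundary term explicitly and reduce the claim to a sign check at each endpoint. Writing the bracket out,
$$
-s\int_{0}^{T}\left[2a^2b^2\varphi_x w_x^2\right]_{x=0}^{x=1}dt
= -2s\int_{0}^{T} a(1)^2 b(t)^2\,\varphi_x(1,t)\,w_x(1,t)^2\,dt
\;+\; 2s\int_{0}^{T} b(t)^2\lim_{x\to 0^{+}}\!\big(a(x)^2\varphi_x(x,t)w_x(x,t)^2\big)\,dt,
$$
so it suffices to prove that the limit appearing at $x=0$ is zero and that $\varphi_x(1,t)\le 0$ for every $t\in(0,T)$; all the remaining factors ($s$, $a(1)^2$, $b(t)^2$, $w_x(1,t)^2$) are nonnegative.

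For the endpoint $x=1$ I would insert the explicit formula for $\psi$. On $[\beta',1]$ we have $\psi(x)=-\int_{\beta'}^{x}\frac{\xi}{a(\xi)}\,d\xi$, hence $\psi_x(x)=-\frac{x}{a(x)}<0$ there; in particular $\psi_x(1)=-\frac{1}{a(1)}<0$. Since $\varphi_x=\lambda\,\psi_x\,\sigma$ with $\lambda>0$ and $\sigma(1,t)=\theta(t)\eta(1)>0$, this gives $\varphi_x(1,t)<0$ for all $t\in(0,T)$, and therefore $-2s\,a(1)^2 b(t)^2\varphi_x(1,t)w_x(1,t)^2\ge 0$; integrating in $t$, the first term on the right-hand side above is nonnegative.

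For the endpoint $x=0$ the degeneracy $a(0)=0$ is what makes the term vanish. On $[0,\alpha')$ we have $\varphi_x=\lambda\psi_x\sigma=\lambda\frac{x}{a}\sigma$, and the hypothesis $xa'\le Ka$ with $K\in[0,1)$ integrates to $a(x)\ge a(1)x^{K}$, so $\frac{x}{a(x)}\le a(1)^{-1}x^{1-K}\to 0$ as $x\to 0^{+}$; hence $\varphi_x(x,t)\to 0$ as $x\to 0^{+}$. Writing $a^2\varphi_x w_x^2=\varphi_x\,(aw_x)^2$ and using that $aw_x$ is bounded near $x=0$ for solutions regular enough to justify the integrations by parts carried out in Lemma~\ref{lemaA1} (the general case following by the usual density argument), we conclude that $a(x)^2\varphi_x(x,t)w_x(x,t)^2\to 0$ as $x\to 0^{+}$, so this contribution is zero.

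Putting the two endpoints together yields $-s\int_{0}^{T}[2a^2b^2\varphi_x w_x^2]_{x=0}^{x=1}\,dt\ge 0$, which is the assertion. The endpoint $x=1$ is a one-line sign computation; the only step deserving any attention is the vanishing at the degenerate endpoint $x=0$, which relies on the structure of the weight there ($\varphi_x=\lambda\frac{x}{a}\sigma$) together with the boundedness of $\frac{x}{a}$ and of $aw_x$.
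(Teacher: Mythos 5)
Your proof is correct and follows essentially the same route as the paper: evaluate the bracket at the two endpoints, observe that $\psi_x(1)=-1/a(1)<0$ makes the $x=1$ contribution nonnegative, and that the $x=0$ contribution vanishes because of the degeneracy. Your justification of the vanishing at $x=0$ (via $\varphi_x=\lambda\frac{x}{a}\sigma\to 0$, using $a(x)\ge a(1)x^K$, together with boundedness of $aw_x$) is in fact more careful than the paper's, which simply sets the $x=0$ term to zero.
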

\begin{proof}
	Using that $\varphi_x=\lambda \psi_x \sigma$ we have:
	\begin{equation*}
    \begin{split}
		-s \int_{0}^{T}\left[2a^2b^2\varphi_x w_x^2\right]^{x=1}_{x=0} &= -2s\lambda \left[ \int_{0}^{T}a(1)^2b(t)^2 \psi_x(1) \sigma(1,t) w_x^2(1,t) \right]
		+2s\lambda \left[ \int_{0}^{T}a(0)^2b(t)^2 \psi_x(0) \sigma(0,t) w_x^2(0,t) \right] \\
        &=-s\lambda\int_{0}^{T}a(1)^2b(t)^2 \left(-\frac{1}{a(1)}\right) \sigma(1,t) w_x^2(1,t) \geq 0. 
    \end{split}    
	\end{equation*}
\end{proof}

\begin{lema}[For $I_4$ in \eqref{eq:int_I4}]\label{lemaA6}
	\begin{equation*}
		2s \int_Q b^2 (a \varphi_x)_x a w_x^2 \geq
		2 s \lambda \int_0^T \int_0^{\alpha'} b^2 a \sigma w_x^2 - C s\lambda \int_0^T\int_{\omega'} b^2 \sigma w_x^2 + C s \lambda^2 \int_Q b^2 a^2 |\psi_x|^2 \sigma w_x^2.
	\end{equation*}
\end{lema}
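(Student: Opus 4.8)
The plan is to expand the coefficient $(a\varphi_x)_x$ explicitly and then split the resulting integral over the three regions $[0,\alpha']$, $\omega'$ and $[\beta',1]$ on which the weight $\psi$ has an explicit form. Since $\varphi_x=\lambda\psi_x\sigma$ and $\sigma_x=\lambda\psi_x\sigma$, one has $(a\varphi_x)_x=\lambda(a\psi_x)_x\sigma+\lambda^2 a\psi_x^2\sigma$, so that
$$2s\int_Q b^2(a\varphi_x)_x\,a w_x^2=2s\lambda\int_Q b^2(a\psi_x)_x\,\sigma a w_x^2+2s\lambda^2\int_Q b^2 a^2\psi_x^2\,\sigma w_x^2.$$
The second term is already nonnegative and will produce the final term $Cs\lambda^2\int_Q b^2 a^2|\psi_x|^2\sigma w_x^2$; the first term is the one that requires the localization argument.

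First I would treat $\int_Q b^2(a\psi_x)_x\,\sigma a w_x^2$ on each subinterval. On $[0,\alpha']$ we have $\psi_x=x/a$, hence $a\psi_x=x$ and $(a\psi_x)_x=1$, and this piece is exactly $\int_0^T\int_0^{\alpha'}b^2 a\sigma w_x^2$, giving the positive leading term $2s\lambda\int_0^T\int_0^{\alpha'}b^2 a\sigma w_x^2$. On $\omega'$, where $a\in C^1$ is bounded above and below by positive constants and $\psi\in C^2$, the product $(a\psi_x)_x a$ is bounded, so that piece is dominated in absolute value by $Cs\lambda\int_0^T\int_{\omega'}b^2\sigma w_x^2$, which is the observation-type term. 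On $[\beta',1]$ we have $\psi_x=-x/a$, hence $(a\psi_x)_x=-1$, which yields the wrong-sign contribution $-2s\lambda\int_0^T\int_{\beta'}^1 b^2 a\sigma w_x^2$.

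The only genuine point — and the main, though mild, obstacle — is to dispose of the wrong-sign term over $[\beta',1]$. There both $a$ and $|\psi_x|=x/a$ are bounded above and below by positive constants, so $a\leq C a^2|\psi_x|^2$ on $[\beta',1]$, whence $-2s\lambda\int_0^T\int_{\beta'}^1 b^2 a\sigma w_x^2\geq -Cs\lambda\int_Q b^2 a^2|\psi_x|^2\sigma w_x^2$. Combining this with the nonnegative term $2s\lambda^2\int_Q b^2 a^2\psi_x^2\sigma w_x^2$ and using that $2\lambda^2-C\lambda\geq\lambda^2$ once $\lambda\geq\lambda_0$ is taken large enough, their sum is bounded below by $Cs\lambda^2\int_Q b^2 a^2|\psi_x|^2\sigma w_x^2$. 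Adding up the three contributions gives the claimed estimate. The remaining work is purely computational; one must only keep careful track of the powers of $s$ and $\lambda$ and note that this absorption forces $\lambda_0$ to be enlarged, consistently with the statement.
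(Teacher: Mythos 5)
Your proof is correct and follows essentially the same route as the paper: expand $(a\varphi_x)_x=\lambda(a\psi_x)_x\sigma+\lambda^2 a\psi_x^2\sigma$, split the $\lambda$-term over $[0,\alpha']$, $\omega'$, $[\beta',1]$ using $(a\psi_x)_x=\pm 1$ on the outer intervals, and absorb the wrong-sign $[\beta',1]$ contribution into the positive $\lambda^2$-term via $a\leq Ca^2|\psi_x|^2$ there, at the cost of enlarging $\lambda_0$. Your localization of the inequality $a\leq Ca^2\psi_x^2$ to $[\beta',1]$ (where it actually holds) is in fact slightly more careful than the paper's phrasing, but the argument is the same.
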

\begin{proof}
    First, we have
    $$(a \varphi_x)_x=a_x \varphi_x+a\varphi_{xx} = a_x \lambda \psi_x\sigma+a\lambda \psi_{xx}\sigma+a\lambda^2\psi_x^2\sigma=\lambda(a\psi_x)_x\sigma +a\lambda^2\psi_x^2\sigma.$$
    Then,
    \begin{multline*}
        2s \int_Q b^2 (a \varphi_x)_x a w_x^2 =2s\lambda  \int_Q ab^2 (a \psi_x)_x \sigma w_x^2+2s\lambda^2\int_Q a^2b^2  \psi_x^2 \sigma w_x^2	\\
        =2s\lambda\left[\int_0^T \int_0^{\alpha'} ab^2 (a \psi_x)_x \sigma w_x^2 +  \int_0^T\int_{\omega'} ab^2 (a \psi_x)_x \sigma w_x^2 %\right.\\
		%+ \left.
        + \int_0^T \int_{\beta'}^1 ab^2 (a \psi_x)_x \sigma w_x^2\right] + 2s\lambda^2\int_Q a^2b^2  \psi_x^2 \sigma w_x^2
    \end{multline*}
	
    We know that 
    $\psi_x=\frac{x}{a}, \  (a\psi_x)_x=1, \ \ x\in(0,\alpha')$. We also have that:  $a^2\psi_x^2=a^2\frac{x^2}{a^2}=x^2\geq Ca$. Then,
    \begin{multline*}
        2s \int_Q b^2 (a \varphi_x)_x a w_x^2\geq 2s\lambda\left[\int_0^T \int_0^{\alpha'} ab^2 \sigma w_x^2 +  \int_0^T\int_{\omega'} ab^2 (a \psi_x)_x \sigma w_x^2 - \int_0^T \int_{\beta'}^1 ab^2 \sigma w_x^2\right]	%\\
		+ Cs\lambda^2\int_Q ab^2 \sigma w_x^2.
    \end{multline*}
    
    This inequality implies that,
    $$ 2s \int_Q b^2 (a \varphi_x)_x a w_x^2 \geq 2 s \lambda \int_0^T \int_0^{\alpha'} b^2 a \sigma w_x^2 - C s\lambda \int_0^T\int_{\omega'} b^2 \sigma w_x^2 + C s \lambda^2 \int_Q b^2 a^2 |\psi_x|^2 \sigma w_x^2.$$	
\end{proof}

\begin{lema}[Estimate for $I_4$ of \eqref{eq:int_I4}] \label{lemaA7}
	\begin{equation*}
		-s \int_Q b^2 a \varphi_x a_x w_x^2 \geq
		- K s \lambda \int_0^T \int_0^{\alpha'} b^2 a \sigma w_x^2 - c s \lambda \int_0^T\int_{\omega'} b^2 \sigma w_x^2.
	\end{equation*}
\end{lema}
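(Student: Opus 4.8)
The plan is to split the single integral $-s\int_Q b^2 a\,\varphi_x\, a_x\, w_x^2$ according to the three regions $[0,\alpha')$, $\omega'=(\alpha',\beta')$ and $[\beta',1]$ that enter the definition of $\psi$, and to handle each piece by a sign analysis of $\varphi_x$ together with the degeneracy hypothesis $xa'\le Ka$. Recall throughout that $\varphi_x=\lambda\psi_x\sigma$, that $a_x=a'\ge 0$ and $a>0$ on $(0,1]$, and that $b^2>0$, $\sigma>0$.

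On $[\beta',1]$ one uses that $\psi_x=-x/a<0$ there, so $\varphi_x=\lambda\psi_x\sigma<0$ while $a\,a_x\,w_x^2\ge 0$; hence $b^2 a\varphi_x a_x w_x^2\le 0$ pointwise and the contribution of this interval to $-s\int_Q(\cdots)$ is nonnegative, so it may simply be discarded. On $[0,\alpha')$ one has $\psi_x=x/a$, whence $a\,\varphi_x\, a_x=\lambda\,(x a')\,\sigma$ and therefore $b^2 a\varphi_x a_x w_x^2=\lambda\, b^2\,(x a')\,\sigma\, w_x^2$. Since $x\ge 0$, $a'\ge 0$ and, by assumption, $x a'\le K a$ on $[0,1]$, this integrand lies between $0$ and $K\lambda b^2 a\sigma w_x^2$; multiplying by $-s$ and integrating over $[0,T]\times[0,\alpha')$ yields exactly the term $-Ks\lambda\int_0^T\int_0^{\alpha'} b^2 a\sigma w_x^2$ (with $K$ the constant of the degeneracy condition, or any larger one).

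Finally, on the fixed interval $\omega'\subset\subset\omega\subset\subset(0,1)$ the functions $a$, $a'=a_x$ and $\psi_x$ are bounded (this is precisely why $\omega'$ was introduced, the weight $\psi$ being a smooth interpolation there), so $|b^2 a\varphi_x a_x w_x^2|=\lambda\, b^2\,|a\, a'\,\psi_x|\,\sigma\, w_x^2\le c\,\lambda\, b^2\sigma\, w_x^2$ for a constant $c$ depending only on $a$ and $\psi$; consequently $-s\int_0^T\int_{\omega'}b^2 a\varphi_x a_x w_x^2\ge -cs\lambda\int_0^T\int_{\omega'} b^2\sigma w_x^2$. Adding the three estimates gives the asserted inequality.

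I do not expect any real obstacle here: the only place where the structural hypotheses on $a$ are genuinely used is the bound $xa'\le Ka$ on $[0,\alpha')$, which is what produces the constant $K$ in front of the first term; the remaining two pieces are pure sign analysis of $\varphi_x$ and boundedness of $a,a',\psi_x$ on $\omega'$, entirely parallel to the corresponding step in \cite{DemarqueLimacoViana_deg_eq2018}. The factor $b^2$ plays no role beyond being positive and bounded between $m^2$ and $M^2$, so it is carried along untouched.
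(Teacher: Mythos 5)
Your proof is correct and follows essentially the same route as the paper: split the integral over $[0,\alpha')$, $\omega'$ and $[\beta',1]$, use $xa'\le Ka$ together with $\psi_x=x/a$ to produce the constant $K$ on the degenerate piece, and use boundedness of $a$, $a_x$ and $\psi_x$ on the compact set $\overline{\omega'}\subset(0,1]$ for the middle piece. The one place you do better than the written proof is on $[\beta',1]$: you note that $\psi_x=-x/a<0$ there, so the integrand of $-s\int b^2 a\varphi_x a_x w_x^2$ is pointwise nonnegative and that region may be discarded, whereas the paper passes to absolute values and then silently drops the $[\beta',1]$ term from a lower bound --- a step that is only legitimate because of exactly the sign observation you make explicit.
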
	

\begin{proof} We have that 
	$\varphi_{x}=\lambda \psi_x \sigma$ and $\psi_x=\frac{x}{a}$. Then $xa'\leq Ka$, from the definition of $\psi$, one has that $\psi_x=\frac{x}{a}, \ \  x\in (0,\alpha')$ and $\psi_x=-\frac{x}{a}, \ \  x\in (\beta',1).$
	Then,
    \begin{equation}
    \begin{split}
    \left| -s \int_Q b^2 a \varphi_x a_x w_x^2  \right| &\leq s\int_Q b^2 \lambda \sigma |a a_x \psi_x|  w_x^2 \\
    &\leq s\left[\int_0^T \int_0^{\alpha'} b^2 \lambda \sigma|a a_x \psi_x| w_x^2 +  \int_0^T\int_{\omega'} b^2 \lambda \sigma|a a_x \psi_x|  w_x^2 
		+\int_0^T \int_{\beta'}^1 b^2 \lambda \sigma|a a_x \psi_x|  w_x^2\right] \\
        &\leq s\left[\int_0^T \int_0^{\alpha'} b^2 \lambda \sigma|a a_x \psi_x| w_x^2 +  \int_0^T\int_{\omega'} b^2 \lambda \sigma|a a_x \psi_x|  w_x^2 + \int_0^T \int_{\beta'}^1 b^2 \lambda \sigma|a a_x \psi_x|  w_x^2\right]\\
		&\leq s\left[K\int_0^T \int_0^{\alpha'} ab^2 \lambda  w_x^2 +  \int_0^T\int_{\omega'} b^2 \lambda \sigma w_x^2 + \int_0^T \int_{\beta'}^1 b^2 \lambda \sigma|a a_x \psi_x|  w_x^2\right]
    \end{split}
    \end{equation}
    
	Which implies:
	$$
	-s \int_Q b^2 a \varphi_x a_x w_x^2  \geq -s\left[K\int_0^T \int_0^{\alpha'} ab^2 \lambda  w_x^2 +  \int_0^T\int_{\omega'} b^2 \lambda \sigma w_x^2 + \int_0^T \int_{\beta'}^1 b^2 \lambda \sigma|a a_x \psi_x|  w_x^2\right]
	$$
	$$
  \geq -Ks\lambda \int_0^T \int_0^{\alpha'} ab^2   w_x^2 -  s\lambda\int_0^T\int_{\omega'} b^2  \sigma w_x^2
	$$
\end{proof}

\begin{lema}[Estimate for $I_4$ of \eqref{eq:int_I4}]
\label{lemaA8}
	\begin{equation*}
    \begin{split}
		s \int_Q b^2 a (a \varphi_x)_{xx} w_x w \geq
		&- C s^2\lambda^3 \int_0^T \int_0^{\alpha'} b^2 \frac{x^2}{a}  \sigma^3 w^2 - C \lambda \int_0^T \int_0^{\alpha'} b^2 a \sigma w_x^2
		- Cs^2\lambda^3 \int_0^T\int_{\omega'} b^2 \sigma^3 w^2 \\
		&- C\lambda \int_0^T\int_{\omega'} b^2 \sigma w_x^2
		-Cs^2\lambda^4 \int_Q b^2 a^2 |\psi_x|^4 \sigma^3 w^2 - C\lambda^2 \int_Q b^2 a^2|\psi_x|^2\sigma w_x^2.
    \end{split}
	\end{equation*}
\end{lema}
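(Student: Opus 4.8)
The plan is to bound $\bigl|\,s\int_Q b^2 a (a\varphi_x)_{xx}\,w_x w\,\bigr|$ from above by the six terms on the right-hand side. Setting $P:=s\,b^2 a\,(a\varphi_x)_{xx}$ and differentiating the identity $(a\varphi_x)_x=\lambda(a\psi_x)_x\sigma+\lambda^2 a\psi_x^2\sigma$ established in the proof of Lemma~\ref{lemaA6} (using $\sigma_x=\lambda\psi_x\sigma$), I would write $P=P_1+P_2+P_3$, homogeneous in $\lambda$,
\begin{equation*}
P_1=s\,b^2\lambda\,a(a\psi_x)_{xx}\,\sigma,\qquad
P_2=s\,b^2\lambda^2\,a\,\psi_x\bigl(2a_x\psi_x+3a\psi_{xx}\bigr)\sigma,\qquad
P_3=s\,b^2\lambda^3\,a^2\psi_x^3\,\sigma,
\end{equation*}
then split the integral over the three zones $(0,\alpha')$, $\omega'=(\alpha',\beta')$, $(\beta',1)$, and in each zone apply Young's inequality $|P_iw_xw|\le\frac{1}{2}G_iw_x^2+\frac{1}{2}G_i^{-1}P_i^2w^2$, with $G_i$ chosen to be, up to a constant, one of the $w_x^2$-weights already present on the right-hand side (namely $\lambda a b^2\sigma$, $\lambda b^2\sigma$, or $\lambda^2 b^2 a^2|\psi_x|^2\sigma$). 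It then remains to check that each resulting $P_i^2/G_i$ is dominated by one of the $\sigma^3 w^2$-weights on the right-hand side, which uses $\sigma\ge(4/T^2)^4$, hence $\sigma\le C\sigma^3$.

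The decomposition is built around two structural facts. First, on $(0,\alpha')$ and on $(\beta',1)$ one has $a\psi_x=\pm x$, so $(a\psi_x)_{xx}\equiv0$ and $P_1\equiv0$ there, while $\psi_x=\pm x/a$ and $xa'\le Ka$ give $P_2=s b^2\lambda^2 x\,(3-xa'/a)\,\sigma$ and $|P_3|=s b^2\lambda^3 (x^3/a)\,\sigma$. On $(0,\alpha')$ I pair $P_2$ with $G=\lambda a b^2\sigma$, the by-product being $P_2^2/G\le Cs^2\lambda^3 b^2 (x^2/a)\,\sigma\le Cs^2\lambda^3 b^2 (x^2/a)\,\sigma^3$ (the $w_x^2$-coefficient $\tfrac12 G$ feeding the $\lambda a b^2\sigma$ term), and $P_3$ with $G=\lambda^2 b^2 x^2\sigma=\lambda^2 b^2 a^2|\psi_x|^2\sigma$, the by-product being $P_3^2/G\le Cs^2\lambda^4 b^2 (x^4/a^2)\,\sigma=Cs^2\lambda^4 b^2 a^2|\psi_x|^4\sigma\le Cs^2\lambda^4 b^2 a^2|\psi_x|^4\sigma^3$; on $(\beta',1)$ the same pairing works since there $x$, $x^4/a^2$ and $a^2|\psi_x|^4$ are bounded above and below. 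Second, on $\omega'$ (choosing $\psi$ so that $a\psi_x$ is smooth on $[\alpha',\beta']$) the quantities $(a\psi_x)_{xx}$, $a$, $a_x$, $\psi_x$, $\psi_{xx}$ are bounded, so $|P_1|\le Cs\lambda b^2\sigma$, $|P_2|\le Cs\lambda^2 b^2|\psi_x|\sigma$, $|P_3|\le Cs\lambda^3 b^2|\psi_x|^3\sigma$; I pair $P_1$ with $G=\lambda b^2\sigma$ (by-product $\le Cs^2\lambda b^2\sigma\le Cs^2\lambda^3 b^2\sigma^3$) and $P_2$, $P_3$ with $G=\lambda^2 b^2 a^2|\psi_x|^2\sigma$, where the factor $\psi_x^2$ in $P_2^2$ and $P_3^2$ cancels the $|\psi_x|^2$ in $G$, so that $P_2^2/G\le Cs^2\lambda^2 b^2\sigma$ and $P_3^2/G\le Cs^2\lambda^4 b^2 a^2|\psi_x|^4\sigma$ extend continuously across the (unavoidable) interior zero of $\psi_x$ in $\omega'$. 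Summing the three zones yields the asserted lower bound.

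The only real difficulty is the bookkeeping of the powers of $s$, $\lambda$ and $\sigma$ in the presence of the degeneracy at $x=0$ and of the zeros of $\psi_x$ inside $\omega'$. An undifferentiated Young splitting of $a(a\varphi_x)_{xx}$ fails: on $\omega'$ the $\lambda^3$-part of the coefficient cannot be absorbed by the only order-$\lambda^1$ weight available there, $\lambda b^2\sigma$, while keeping the $w^2$ by-product at order $s^2\lambda^3\sigma^3$. The fix — decomposing by $\lambda$-homogeneity and observing that the $\lambda^2$- and $\lambda^3$-parts carry explicit factors of $\psi_x$, so that they may safely be paired with the degenerate weight $\lambda^2 b^2 a^2|\psi_x|^2\sigma$ and land their by-products exactly in the $s^2\lambda^3 b^2(x^2/a)\sigma^3$, $s^2\lambda^4 b^2 a^2|\psi_x|^4\sigma^3$ and $s^2\lambda^3 b^2\sigma^3$ terms — is the crux, and it hinges on the identities $a\psi_x=\pm x$ on $(0,\alpha')\cup(\beta',1)$ together with $0\le xa'/a\le K<1$.
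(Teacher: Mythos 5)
Your proposal is correct and follows essentially the same route as the paper: expand $(a\varphi_x)_{xx}$ into its $\lambda$-homogeneous pieces (the paper writes your $P_2$ as two separate integrals $J_2$, $J_3$ involving $\psi_x(a\psi_x)_x$ and $a\psi_x\psi_{xx}$, but their sum is exactly your $P_2$), observe that the $\lambda^1$ piece vanishes off $\omega'$ because $a\psi_x=\pm x$ there, and absorb each piece by Young's inequality with the same choice of $w_x^2$-weights and the same use of $\sigma\le C\sigma^3$. The verification of the zone-by-zone bounds, including the cancellation of $\psi_x^2$ across its interior zero in $\omega'$, matches the paper's computation.
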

\begin{proof}
    From the definition of $\varphi$, we have
    $$\varphi_x=\lambda \psi_x \sigma, \ \ \ \ \ \ \varphi_{xx}=\lambda \psi_{xx}\sigma +\lambda^2\psi_{xx}\psi_x \sigma, \ \ \ \ \ \ \varphi_{xxx}=\lambda \psi_{xxx}\sigma +3\lambda^2 \psi_{xx}\psi_x \sigma+\lambda^3\psi_x^3\sigma,$$
    %$$(a\varphi_x)_{xx}=a''\varphi_x+2a'\varphi_{xx}+a\varphi_{xxx}, \ \ \ \ $$
    $$ (a\varphi_x)_{xx}=\left[ \lambda (a\psi_x)_{xx}+2\lambda^2 \psi_x (a\psi_x)_x+\lambda^2a\psi_x\psi_{xx}+\lambda^3 a\psi_x^3   \right]\sigma.$$
    
    Thus, 
	\begin{align*}
		s\int_Q b^2 a (a \varphi_x)_{xx} w_x w=&s\lambda\int_Q ab^2 (a\psi_x)_{xx} \sigma w_x w+2s\lambda^2 \int_Q ab^2 \psi_x (a\psi_x)_x \sigma w_x w\\
		&+s\lambda^2\int_Q a^2b^2\psi_x\psi_{xx} \sigma w_x w
		+s\lambda^3\int_Q a^2b^2\psi_x^3 \sigma w_x w.
	\end{align*}
    
	From the definition of $\psi$ in $(0,\alpha')$ and $(\beta',1)$, we have that $\psi_x=\pm\frac{x}{a}$. With this we get $(a \varphi_x)_{xx}=0$  in those domains. Thus,
		
%	\noindent The first integral in the inequality implies that, 
%	\begin{multline*}
%		s\lambda\int_Q ab^2 (a\psi_x)_{xx} \sigma w_x w=s\lambda\left[\int_0^T \int_0^{\alpha'} ab^2 (a\psi_x)_{xx} \sigma w_x w +  \int_0^T\int_{\omega'} ab^2 (a\psi_x)_{xx} \sigma w_x w \right.\\
%		\left.+\int_0^T \int_{\beta'}^1 ab^2 (a\psi_x)_{xx} \sigma w_x w\right]
%	\end{multline*}
	$$s\lambda\int_Q ab^2 (a\psi_x)_{xx} \sigma w_x w=s\lambda\int_0^T\int_{\omega'} ab^2 (a\psi_x)_{xx} \sigma w_x w $$
%	since $(a\psi_x)_{xx}=0$ in  $\omega'$ in $(\beta', 1).$\\	

	Then, returning to the expression we want to estimate, 
	\begin{align*}
		s\int_Q b^2 a (a \varphi_x)_{xx} w_x w=&s\lambda\int_0^T\int_{\omega'} ab^2 (a\psi_x)_{xx} \sigma w_x w +2s\lambda^2 \int_Q ab^2 \psi_x (a\psi_x)_x \sigma w_x w\\
		&+s\lambda^2\int_Q a^2b^2\psi_x\psi_{xx} \sigma w_x w
		+s\lambda^3\int_Q a^2b^2\psi_x^3 \sigma w_x w
		=J_1+J_2+J_3+J_4.
	\end{align*}
    
    We estimate each integral of the last sum. Taking into account that $\sigma\leq C\sigma^2$, we have
    $$\left|J_1 \right|\leq s\lambda\int_0^T\int_{\omega'} b^2 |a(a\psi_x)_{xx}| \sigma |w_x| |w|\leq Cs\lambda\int_0^T\int_{\omega'} b^2  \sigma^2 |w_x| |w|$$
    Then,
    $$
    \left|J_1 \right|\leq Cs\lambda\int_0^T\int_{\omega'} b \sigma^{3/2}|w| b\sigma^{1/2} |w_x| \leq Cs\lambda\int_0^T\int_{\omega'}b^2 \sigma^3|w|^2 + Cs\lambda\int_0^T\int_{\omega'}b^2 \sigma|w_x|^2.
    $$
	
    \noindent {\bf Estimates for $J_2$:}
    \begin{multline*}
    \left|J_2 \right|\leq s\lambda^2\left[\int_0^T \int_0^{\alpha'} ab^2 \psi_x (a\psi_x)_x \sigma |w_x| |w| +  \int_0^T\int_{\omega'} ab^2 \psi_x (a\psi_x)_x \sigma |w_x| |w| + \int_0^T \int_{\beta'}^1 ab^2 \psi_x (a\psi_x)_x \sigma |w_x| |w|\right]
    \end{multline*}
    On the other hand, we get that $\left| a\psi_x (a\psi_x)_x  \right|=\left| a\cdot \frac{x}{a}\left( a\frac{x}{a} \right)_x \right|=|x|$ for $x\in (0,\alpha')$ and similarly for $x\in (\beta',1)$. We also have the following inequalities: $\sigma\leq C\sigma^2$, \ \ $x\leq a^2|\psi_x|^3$ in $x\in (\beta',1)$. Then,
    \begin{equation*}
    \begin{split}
    \left|J_2 \right| &\leq s\lambda^2\left[\int_0^T \int_0^{\alpha'} b^2 x \sigma |w_x| |w| +  \int_0^T\int_{\omega'} ab^2 \psi_x (a\psi_x)_x \sigma |w_x| |w| + \int_0^T \int_{\beta'}^1 b^2 x \sigma |w_x| |w|\right] \\
    &\leq s\lambda^2 C \left[\int_0^T \int_0^{\alpha'} b^2 x \sigma^2 |w_x| |w| +  \int_0^T\int_{\omega'} b^2  \sigma^2 |w_x| |w| + \int_0^T \int_{\beta'}^1 a^2b^2|\psi_x|^3 \sigma^2 |w_x| |w|\right] \\
    &\leq C \int_0^T \int_0^{\alpha'} \left( \frac{x}{\sqrt{a}}b\sigma^{3/2}\lambda^{3/2}s|w|\right)\left( \sqrt{a}b\sigma^{1/2}\lambda^{1/2}|w_x| \right) \\
    &+C\int_0^T\int_{\omega'} \left( b\sigma^{3/2}\lambda^{3/2}s|w|\right)\left( b\sigma^{1/2}\lambda^{1/2}|w_x| \right)
		+C\int_0^T \int_{\beta'}^1\left( b\sigma^{3/2}sa\lambda^2|\psi_x|^2|w|\right)\left( b\sigma^{1/2}a|\psi_x||w_x| \right)
    \end{split}    
    \end{equation*}
    Thus,
    \begin{equation*}
    \begin{split}
        \left|J_2 \right| \leq &Cs^2\lambda^3\int_0^T \int_0^{\alpha'}\frac{x^2}{a}b^2\sigma^3|w|^2+\lambda\int_0^T \int_0^{\alpha'}ab^2\sigma|w_x|^2+Cs^2\lambda^3\int_0^T\int_{\omega'}b^2\sigma^3|w|^2\\
		&+C\lambda \int_0^T\int_{\omega'}b^2\sigma|w_x|^2
		+Cs^2\lambda^4\int_0^T \int_{\beta'}^1a^2b^2\sigma^3|\psi_x|^4|w|^2+C\int_0^T \int_{\beta'}^1a^2b^2|\psi_x|^2|w_x|^2.    
    \end{split}    
    \end{equation*}

    \noindent {\bf Estimates for $J_3$:}
    $$|J_3|\leq s\lambda^2\int_Q a^2b^2|\psi_x\psi_{xx}| \sigma |w_x| |w|$$
    \begin{multline*}
	|J_3|\leq s\lambda^2\int_0^T \int_0^{\alpha'}          a^2b^2|\psi_x\psi_{xx}| \sigma |w_x| |w| +  s\lambda^2\int_0^T\int_{\omega'} a^2b^2|\psi_x\psi_{xx}| \sigma |w_x| |w|
	+ s\lambda^2\int_0^T \int_{\beta'}^1 a^2b^2|\psi_x\psi_{xx}| \sigma |w_x| |w|.
    \end{multline*}
    
    From the definition of $\psi$, one has,
    $\psi_x\psi_{xx}=\pm \frac{x}{a}\cdot \frac{a-xa'}{a^2}=\pm \frac{x}{a^3}(a-xa')$, for $x\in (0,\alpha')$ and for $x\in (\beta',1)$.
    We also know that $xa'-a\leq Ka-a\leq Ca$. Then,
    \begin{multline*}
		|J_3|\leq s\lambda^2\int_0^T \int_0^{\alpha'} a^2b^2\left|  \frac{a-xa'}{a^3} \right| \sigma |w_x| |w| +  s\lambda^2\int_0^T\int_{\omega'} a^2b^2|\psi_x\psi_{xx}| \sigma |w_x| |w|\\
		+ s\lambda^2\int_0^T \int_{\beta'}^1 a^2b^2\left| - \frac{a-xa'}{a^3} \right| \sigma |w_x| |w|
    \end{multline*}
    $$|J_3|\leq Cs\lambda^2\int_0^T \int_0^{\alpha'} b^2x \sigma^2 |w_x| |w| +  Cs\lambda^2\int_0^T\int_{\omega'} b^2 \sigma^2 |w_x| |w| +C s\lambda^2\int_0^T \int_{\beta'}^1 b^2x \sigma^2 |w_x| |w|,$$
    and we can follow the same estimate as for $J_2$. 
%    The procedure is the same as for $J_2$, that is,
%    \begin{multline*}
%		\left|J_3 \right|\leq Cs^2\lambda^3\int_0^T \int_0^{\alpha'}\frac{x^2}{a}b^2\sigma^3|w|^2+\lambda\int_0^T \int_0^{\alpha'}ab^2\sigma|w_x|^2+Cs^2\lambda^3\int_0^T\int_{\omega'}b^2\sigma^3|w|^2+C\lambda \int_0^T\int_{\omega'}b^2\sigma|w_x|^2\\
%		+Cs^2\lambda^4\int_0^T \int_{\beta'}^1a^2b^2\sigma^3|\psi_x|^4|w|^2+C\int_0^T \int_{\beta'}^1a^2b^2|\psi_x|^2|w_x|^2
%    \end{multline*}
 
    \noindent {\bf Estimates for $J_4:$}
    \begin{equation*}
    \begin{split}
    |J_4| &\leq s\lambda^3\int_Q a^2b^2|\psi_x|^3 \sigma |w_x|  |w|\leq \int_{Q} \left( sab\lambda^{2}|\psi_x|^2\sigma^{3/2}|w|\right)\left( ab\lambda|\psi_x|\sigma^{1/2}|w_x| \right) \\
    &\leq s^2\lambda^4\int_Qa^2b^2|\psi_x|^4 \sigma^3 |w|^2+C\lambda^2\int_{Q} a^2b^2|\psi_x|^2 \sigma |w_x|^2
    \end{split}    
    \end{equation*}
\end{proof}

\begin{lema}\label{lemaA9}
	\begin{multline*}
		\hspace*{-0.5cm}s^3 \lambda^3 \int_0^T \int_0^{\alpha'} b^2 \frac{x^2}{a} \sigma^3 w^2 + s\lambda \int_0^T \int_0^{\alpha'} b^2 a \sigma w_x^2 + s^3\lambda^4 \int_0^T \int_0^1 b^2 a^2 |\psi_x|^4 \sigma^3 w^2 \\
		+ s\lambda^2 \int_0^T \int_0^1 b^2 a^2 |\psi_x|^2 \sigma w_x^2 
		\leq C \left(\int_Q e^{2s\varphi} |h|^2 + s^3\lambda^3 \int_0^T \int_{\omega'} \sigma^3 w^2 + s\lambda \int_0^T \int_{\omega'} \sigma w_x^2 \right)
	\end{multline*}
\end{lema}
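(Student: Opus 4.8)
\emph{Plan of proof.} The plan is to combine the $L^2$ orthogonality identity
\[
\|L^+w\|^2+\|L^-w\|^2+2\langle L^+w,L^-w\rangle=\|e^{s\varphi}h\|_{L^2(Q)}^2
\]
(stated right before Lemma \ref{lemaA1}, with $w=e^{s\varphi}v$) with the pointwise lower bounds for the individual terms of $\langle L^+w,L^-w\rangle$ supplied by Lemmas \ref{lemaA1}--\ref{lemaA8}. Dropping the two nonnegative squares on the left yields $2\langle L^+w,L^-w\rangle\le\int_Q e^{2s\varphi}|h|^2$, so it suffices to bound $\langle L^+w,L^-w\rangle$ from below by a fixed multiple of the left-hand side of the claimed inequality minus the two $\omega'$-integrals on the right. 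First I would insert the expansion of Lemma \ref{lemaA1}; the boundary term produced there by $I_1$ vanishes since $w=0$ on $\Sigma$ forces $w_t=0$ there, while the remaining boundary term $-s\int_0^T[a^2\varphi_x w_x^2]_{x=0}^{x=1}$ is nonnegative by Lemma \ref{lemaA5} (it vanishes at $x=0$ because $a(0)=0$ and $a^2\varphi_x=\lambda xa\,\sigma\to0$, and at $x=1$ it carries the good sign because $\psi_x(1)=-1/a(1)<0$), so it can simply be discarded.

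Next I would bound the volume terms of Lemma \ref{lemaA1} one at a time: Lemmas \ref{lemaA2}, \ref{lemaA2_1}, \ref{lemaA2_2} for the three $I_1$-pieces $-\tfrac{s^2}{2}\int_Q a\dot b\varphi_x^2w^2$, $\tfrac12\int_Q a\dot b w_x^2$, $\tfrac s2\int_Q\varphi_{tt}w^2$; Lemma \ref{lemaA3} for the $\varphi_x\varphi_{xt}w^2$-term; Lemma \ref{lemaA4} for the $s^3$-term; and Lemmas \ref{lemaA8}, \ref{lemaA6}, \ref{lemaA7} for the three remaining pieces of $I_4$ in \eqref{eq:int_I4}. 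Adding these up, the leading positive contributions are exactly the four quantities on the left of the claim: from Lemma \ref{lemaA4}, $Cs^3\lambda^3\int_0^T\!\int_0^{\alpha'}b^2\tfrac{x^2}{a}\sigma^3w^2$ and $Cs^3\lambda^4\int_Q b^2a^2|\psi_x|^4\sigma^3w^2$; from Lemma \ref{lemaA6}, $2s\lambda\int_0^T\!\int_0^{\alpha'}b^2a\sigma w_x^2$ and $Cs\lambda^2\int_Q b^2a^2|\psi_x|^2\sigma w_x^2$. The delicate point is the weight $b^2a\sigma w_x^2$ on $(0,\alpha')$: its only other appearance among all the lemmas is the term $-Ks\lambda\int_0^T\!\int_0^{\alpha'}b^2a\sigma w_x^2$ coming from Lemma \ref{lemaA7}, and since $K\in[0,1)$ the net coefficient $(2-K)s\lambda$ stays strictly positive, the $\lambda$-order contribution of Lemma \ref{lemaA8} to the same weight being then absorbed for $s$ large.

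It then remains to check that all other contributions are absorbable. Each of them carries the same weight and domain as one of the four leading terms but with a strictly smaller power of $s$ (or, in a few cases, of $\lambda$): for $b^2\tfrac{x^2}{a}\sigma^3w^2$ on $(0,\alpha')$ the bad orders are $s^2\lambda^2$ and $s^2\lambda^3$; for $b^2a^2|\psi_x|^4\sigma^3w^2$ on $Q$, $s^2\lambda^2$ and $s^2\lambda^4$; for $b^2a\sigma w_x^2$ on $(0,\alpha')$, at most $s^{1/2}$ and $\lambda$; for $b^2a^2|\psi_x|^2\sigma w_x^2$ on $Q$, at most $s^{1/2}$ and $\lambda^2$. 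The $[\beta',1]$-pieces of the various lemmas, once the bounds $a\le Ca^2|\psi_x|^2$ and $\tfrac{x^2}{a}\le Ca^2|\psi_x|^4$ valid there are used, are swallowed by the corresponding integrals over $Q$. Hence, choosing $s\ge s_0$ and $\lambda\ge\lambda_0$ large enough, each such term is dominated by half of its leading partner, and what survives is supported in $\omega'$; among those the highest powers are $s^3\lambda^3\int_0^T\!\int_{\omega'}b^2\sigma^3w^2$ (from the negative $\omega'$-term of Lemma \ref{lemaA4}) and $s\lambda\int_0^T\!\int_{\omega'}b^2\sigma w_x^2$ (from Lemmas \ref{lemaA6}--\ref{lemaA7}). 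Bounding $b^2\le M^2$ there and adding the term $\int_Q e^{2s\varphi}|h|^2$ inherited from $2\langle L^+w,L^-w\rangle$ gives exactly the asserted right-hand side. I expect the only real obstacle to be the bookkeeping — tracking the roughly twenty terms produced by Lemmas \ref{lemaA2}--\ref{lemaA8} and matching each to the leading term that absorbs it, with the correct powers of $s$ and $\lambda$ — the single conceptual ingredient being the inequality $K<1$, which is precisely what keeps the $w_x^2$-coefficient on $(0,\alpha')$ strictly positive.
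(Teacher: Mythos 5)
Your proposal is correct and follows essentially the same route as the paper: start from the identity $\|L^+w\|^2+\|L^-w\|^2+2\langle L^+w,L^-w\rangle=\|e^{s\varphi}h\|^2$, drop the squares, insert the decomposition of Lemma \ref{lemaA1}, discard the boundary term via Lemma \ref{lemaA5}, and absorb all lower-order contributions from Lemmas \ref{lemaA2}--\ref{lemaA8} into the four leading positive terms for $s,\lambda$ large. Your explicit remark that $K<1$ is what keeps the net coefficient of $s\lambda\int_0^T\!\int_0^{\alpha'}b^2a\sigma w_x^2$ strictly positive is a point the paper uses only implicitly, but it does not change the argument.
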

\begin{proof}
	We know from Lemma \ref{lemaA1} that
	$$\langle L^+ w,L^- w \rangle =I_1+I_2+I_3+I_4,$$
	and from Lemmas \ref{lemaA2} --  \ref{lemaA8}, we  obtain:
    \begin{equation*}
    \begin{split}
		\langle L^+ w,L^- w \rangle \geq &I_1-Cs^2\lambda^2\left[ \int_{0}^{T} \int_{0}^{\alpha'} b^2 \frac{x^2}{a} \sigma^3 w^2 +  \int_{0}^{T} \int_{\omega'}b^2  \sigma^3 w^2+\int_{0}^{T} \int_{\beta'}^{1}a^2b^2 |\psi_x|^4 \sigma^3 w^2 \right]\\
		&+ C s^3 \lambda^3 \int_0^T \int_0^{\alpha'} b^2 \frac{x^2}{a} \sigma^3 w^2 - Cs^3\lambda^3 \int_0^T\int_{\omega'} b^2 \sigma^3 w^2 + C s^3 \lambda^4 \int_Q b^2 a^2 |\psi_x|^4 \sigma^3 w^2\\
        &+2 s \lambda \int_0^T \int_0^{\alpha'} b^2 a \sigma w_x^2 - C s\lambda \int_0^T\int_{\omega'} b^2 \sigma w_x^2 + C s \lambda^2 \int_Q b^2 a^2 |\psi_x|^2 \sigma w_x^2-Ks\lambda \int_0^T \int_0^{\alpha'} ab^2   w_x^2 \\
        &-  s\lambda\int_0^T\int_{\omega'} b^2  \sigma w_x^2 - C s^2\lambda^3 \int_0^T \int_0^{\alpha'} b^2 \frac{x^2}{a}  \sigma^3 w^2 - C \lambda \int_0^T \int_0^{\alpha'} b^2 a \sigma w_x^2 - Cs^2\lambda^3 \int_0^T\int_{\omega'} b^2 \sigma^3 w^2 \\
        &- C\lambda \int_0^T\int_{\omega'} b^2 \sigma w_x^2 -Cs^2\lambda^4 \int_Q b^2 a^2 |\psi_x|^4 \sigma^3 w^2 - C\lambda^2 \int_Q b^2 a^2|\psi_x|^2\sigma w_x^2    
    \end{split}
    \end{equation*}
%	\begin{multline*}
%		\langle L^+ w,L^- w \rangle \geq I_1-Cs^2\lambda^2\left[ \int_{0}^{T} \int_{0}^{\alpha'} b^2 \frac{x^2}{a} \sigma^3 w^2 +  \int_{0}^{T} \int_{\omega'}b^2  \sigma^3 w^2+\int_{0}^{T} \int_{\beta'}^{1}a^2b^2 |\psi_x|^4 \sigma^3 w^2 \right]+\\
%		C s^3 \lambda^3 \int_0^T \int_0^{\alpha'} b^2 \frac{x^2}{a} \sigma^3 w^2 - Cs^3\lambda^3 \int_0^T\int_{\omega'} b^2 \sigma^3 w^2 + C s^3 \lambda^4 \int_Q b^2 a^2 |\psi_x|^4 \sigma^3 w^2+2 s \lambda \int_0^T \int_0^{\alpha'} b^2 a \sigma w_x^2\\
%		- C s\lambda \int_0^T\int_{\omega'} b^2 \sigma w_x^2 + C s \lambda^2 \int_Q b^2 a^2 |\psi_x|^2 \sigma w_x^2-Ks\lambda \int_0^T \int_0^{\alpha'} ab^2   w_x^2 -  s\lambda\int_0^T\int_{\omega'} b^2  \sigma w_x^2\\
%		- C s^2\lambda^3 \int_0^T \int_0^{\alpha'} b^2 \frac{x^2}{a}  \sigma^3 w^2 - C \lambda \int_0^T \int_0^{\alpha'} b^2 a \sigma w_x^2 - Cs^2\lambda^3 \int_0^T\int_{\omega'} b^2 \sigma^3 w^2 - C\lambda \int_0^T\int_{\omega'} b^2 \sigma w_x^2 \\
%		-Cs^2\lambda^4 \int_Q b^2 a^2 |\psi_x|^4 \sigma^3 w^2 - C\lambda^2 \int_Q b^2 a^2|\psi_x|^2\sigma w_x^2
%	\end{multline*}
Thus, 	
    \begin{multline*}
		\langle L^+ w,L^- w \rangle \geq C \left[ s^3 \lambda^3 \int_0^T \int_0^{\alpha'} b^2 \frac{x^2}{a} \sigma^3 w^2 +s \lambda \int_0^T \int_0^{\alpha'} b^2 a \sigma w_x^2+s^3 \lambda^4 \int_Q b^2 a^2|\psi_x|^4 \sigma^3 w^2 \right.\\
		+s \lambda^2 \int_Q b^2 a^2 |\psi_x|^2 \sigma w_x^2
		\left. - s^3\lambda^3 \int_0^T\int_{\omega'} b^2 \sigma^3 w^2-s\lambda\int_0^T\int_{\omega'} b^2  \sigma w_x^2\right]
    \end{multline*}
    Since
    $$\|L^+ w\|^2+\|L^- w\|^2+2\langle L^+ w,L^- w \rangle=\|e^{s\varphi} h\|^2$$
	Then,
	\begin{multline*}
		\|e^{s\varphi} h\|^2-\|L^+ w\|^2-\|L^- w\|^2 \geq C \left[ s^3 \lambda^3 \int_0^T \int_0^{\alpha'} b^2 \frac{x^2}{a} \sigma^3 w^2 +s \lambda \int_0^T \int_0^{\alpha'} b^2 a \sigma w_x^2 +s^3 \lambda^4 \int_Q b^2 a^2|\psi_x|^4 \sigma^3 w^2 \right.\\
		\left. \hspace*{6cm} +s \lambda^2 \int_Q b^2 a^2 |\psi_x|^2 \sigma w_x^2 
		- s^3\lambda^3 \int_0^T\int_{\omega'} b^2 \sigma^3 w^2-s\lambda\int_0^T\int_{\omega'} b^2  \sigma w_x^2\right].
	\end{multline*}
    
	Consequently
	\begin{multline*}
		\|e^{s\varphi} h\|^2 \geq C \left[ s^3 \lambda^3 \int_0^T \int_0^{\alpha'} b^2 \frac{x^2}{a} \sigma^3 w^2 +s \lambda \int_0^T \int_0^{\alpha'} b^2 a \sigma w_x^2+s^3 \lambda^4 \int_Q b^2 a^2|\psi_x|^4 \sigma^3 w^2\right. \\
		\left.+s \lambda^2 \int_Q b^2 a^2 |\psi_x|^2 \sigma w_x^2
		- s^3\lambda^3 \int_0^T\int_{\omega'} b^2 \sigma^3 w^2-s\lambda\int_0^T\int_{\omega'} b^2  \sigma w_x^2\right].
	\end{multline*}
	This implies the conclusion.
%	\begin{multline*}
%		s^3 \lambda^3 \int_0^T \int_0^{\alpha'} b^2 \frac{x^2}{a} \sigma^3 w^2 +s \lambda \int_0^T \int_0^{\alpha'} b^2 a \sigma w_x^2+s^3 \lambda^4 \int_Q b^2 a^2|\psi_x|^4 \sigma^3 w^2+s \lambda^2 \int_Q b^2 a^2 |\psi_x|^2 \sigma w_x^2 \\
%		\leq C\left[ \int_Q e^{2s\varphi} |h|^2 + s^3\lambda^3 \int_0^T \int_{\omega'} b^2\sigma^3 w^2 + s\lambda \int_0^T \int_{\omega'} b^2\sigma w_x^2  \right]
%	\end{multline*}
\end{proof}

Now we transform the estimate to the original variable $v$.
\begin{lema}\label{lemaA10}
	\begin{multline*}
		s^3 \lambda^3 \int_0^T \int_0^{\alpha'} e^{2s\varphi} \frac{x^2}{a} b^2\sigma^3 v^2 +s \lambda \int_0^T \int_0^{\alpha'} e^{2s\varphi} b^2 a \sigma v_x^2+s^3 \lambda^4 \int_Q e^{2s\varphi}b^2 a^2|\psi_x|^4 \sigma^3 v^2\\
		+s \lambda^2 \int_Q e^{2s\varphi}b^2 a^2 |\psi_x|^2 \sigma v_x^2 
		\leq C\left[ \int_Q e^{2s\varphi} |h|^2 + s^3\lambda^3 \int_0^T \int_{\omega} e^{2s\varphi}\sigma^3 v^2  \right]
	\end{multline*}
\end{lema}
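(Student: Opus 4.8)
The plan is to deduce this estimate from Lemma \ref{lemaA9} in two moves: first I would rewrite the inequality of Lemma \ref{lemaA9}, which is phrased in the conjugated variable $w=e^{s\varphi}v$, in terms of $v$; then I would remove the local gradient term $s\lambda\int_0^T\int_{\omega'}\sigma w_x^2$ still present on its right-hand side, since it is the only term not yet of the shape claimed here.

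For the first move, since $w^2=e^{2s\varphi}v^2$, the three integrals carrying $w^2$ on the left-hand side of Lemma \ref{lemaA9} coincide term by term with the $v^2$-integrals in the present statement. For the two gradient terms I would use $e^{s\varphi}v_x=w_x-s\varphi_xw$, so $e^{2s\varphi}v_x^2\le 2w_x^2+2s^2\varphi_x^2w^2$, together with $\varphi_x=\lambda\psi_x\sigma$. Multiplying by the weight $s\lambda\,b^2a$ and integrating over $(0,T)\times(0,\alpha')$, where $a\psi_x^2=x^2/a$, and by the weight $s\lambda^2\,b^2a^2|\psi_x|^2$ and integrating over $Q$, the spurious $w^2$-terms produced are, using $\sigma\ge c>0$, bounded by constant multiples of $s^3\lambda^3\int_0^T\int_0^{\alpha'}b^2\frac{x^2}{a}\sigma^3w^2$ and of $s^3\lambda^4\int_Q b^2a^2|\psi_x|^4\sigma^3w^2$, both of which are themselves part of the left-hand side of Lemma \ref{lemaA9}. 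Hence the left-hand side of the present statement is bounded by a constant times the whole left-hand side of Lemma \ref{lemaA9}, and that lemma yields
$$
\text{(claimed left-hand side)}\ \le\ C\left(\int_Q e^{2s\varphi}|h|^2+s^3\lambda^3\int_0^T\!\!\int_{\omega'}\sigma^3w^2+s\lambda\int_0^T\!\!\int_{\omega'}\sigma w_x^2\right).
$$

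For the second move, $s^3\lambda^3\int_0^T\int_{\omega'}\sigma^3w^2=s^3\lambda^3\int_0^T\int_{\omega'}e^{2s\varphi}\sigma^3v^2$ is already dominated by $s^3\lambda^3\int_0^T\int_{\omega}e^{2s\varphi}\sigma^3v^2$, so only the gradient term needs work. I would fix $\omega'\subset\subset\omega''\subset\subset\omega$ and a cut-off $\xi=\tilde\xi^2$ with $\tilde\xi\in C_c^\infty(\omega'')$, $\tilde\xi\equiv1$ on $\omega'$, pass back to $v$ once more via $e^{s\varphi}v_x=w_x-s\varphi_xw$ (the contribution of $s\varphi_xw$ being a harmless $s^3\lambda^3\sigma^3$-weighted integral of $v^2$ over $\omega''$, since $|\varphi_x|=\lambda|\psi_x|\sigma\le C\lambda\sigma$ there), and then estimate $s\lambda\int_0^T\int_{\omega''}\xi\sigma e^{2s\varphi}v_x^2$ by: integrating by parts in $x$; substituting $v_{xx}=\frac{1}{ab}(h-v_t)-\frac{a_x}{a}v_x$, obtained by solving equation \eqref{adjunto2} for $v_{xx}$ on $\omega''$, where $a$ is bounded below; integrating by parts in $t$ in the ensuing term $v\,v_t=\tfrac{1}{2}(v^2)_t$, with no boundary contribution since $\sigma^k e^{2s\varphi}\to0$ as $t\to0^+$ and $t\to T^-$ for every $k$; and finally applying Young's inequality, balanced so as to absorb a controlled fraction of $s\lambda\int_0^T\int_{\omega''}\xi\sigma e^{2s\varphi}v_x^2$ back into the left. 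Using the bounds on the $x$- and $t$-derivatives of $\varphi$ and $\sigma$ already used in Lemmas \ref{lemaA2}--\ref{lemaA8}, together with $\sigma\ge c>0$, $s\ge s_0$, $\lambda\ge\lambda_0$, every remaining term is bounded either by $Cs^3\lambda^3\int_0^T\int_\omega e^{2s\varphi}\sigma^3v^2$ or by $C\int_Q e^{2s\varphi}|h|^2$; combining with the first move gives the claim.

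I expect the first move to be routine bookkeeping. The delicate point is the cut-off/Caccioppoli argument of the second move: one must carry the $x$- and $t$-derivatives of $\xi$, $\sigma$ and $e^{2s\varphi}$ through both integrations by parts, and then choose the splitting in Young's inequality so that the local $v^2$-integral generated never acquires a weight larger than $s^3\lambda^3\sigma^3$ — precisely the one allowed on the right-hand side of the statement — while the piece put on the $v_x^2$-side keeps the factor $s\lambda\sigma$ so that it can be absorbed.
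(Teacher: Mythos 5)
Your proposal is correct and follows essentially the same route as the paper: converting between $w=e^{s\varphi}v$ and $v$ so that the left-hand side is dominated by the left-hand side of Lemma \ref{lemaA9}, and then removing the local gradient term by a cut-off/Caccioppoli argument that uses equation \eqref{adjunto2} tested against a localized multiple of $v$, with integrations by parts in $x$ and $t$ and a balanced Young inequality. The only difference is cosmetic: the paper multiplies the equation directly by $\lambda s e^{2s\varphi}\sigma v\chi$ rather than first solving for $v_{xx}$, which is an equivalent organization of the same computation.
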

\begin{proof}
	We know that  $v=e^{-s\varphi}w $. Therefore, we have
	$e^{s\varphi}v_x=-s\lambda\psi_x\sigma w+w_x$. 
	Then, it follows:
	$$s \lambda^2 e^{2s\varphi}b^2 a^2 |\psi_x|^2 \sigma v_x^2=\left( s \lambda^2 b^2 a^2|\psi_x|^2\sigma\right)e^{2s\varphi}v_x^2\leq C\left( s \lambda^2 b^2 a^2|\psi_x|^2\sigma\right)\left(  s^2\lambda^2|\psi_x|^2\sigma^2 w^2+w_x^2 \right),$$
	and
	$$s \lambda^2 e^{2s\varphi}b^2 a^2 |\psi_x|^2 \sigma v_x^2\leq  C\left( s^3 \lambda^4 b^2 a^2|\psi_x|^4\sigma^3 w^2 + s \lambda^2 b^2 a^2|\psi_x|^2\sigma w_x^2\right), $$
	Then, since
	$$w_x=s\lambda\psi_x\sigma e^{s\varphi}v+e^{s\varphi}v_x, \qquad w_x^2\leq C \left(  s^2\lambda^2|\psi_x|^2\sigma^2 e^{2s\varphi}v^2+ e^{2s\varphi}v_x^2\right)$$
	and using the minimum value of the function $b(t)$, as $m=\min\limits_{t\in [0,T]} b(t)$, then:
	$$w_x^2\leq \frac{C}{m^2} \left(  s^2\lambda^2b^2|\psi_x|^2\sigma^2 e^{2s\varphi}v^2+ e^{2s\varphi}b^2v_x^2\right),$$
	$$w_x^2\leq C \left(  s^2\lambda^2b^2\sigma^2 e^{2s\varphi}v^2+ e^{2s\varphi}b^2av_x^2\right), \ \ \ x\in w'.$$
%	and
%	\begin{multline*}
%		\hspace*{-1.1cm}\widehat{C}\left(s^3 \lambda^3 \int_0^T \int_0^{\alpha'} e^{2s\varphi}b^2 \frac{x^2}{a} \sigma^3 v^2 +s \lambda \int_0^T \int_0^{\alpha'} b^2 a \sigma w_x^2\right.\\
%		\left.+s^3 \lambda^4 \int_Q b^2 a^2|\psi_x|^4 \sigma^3 w^2
        %+s^3 \lambda^4 \int_Q b^2 a^2|\psi_x|^4 \sigma^3 w^2
%        +s \lambda^2 \int_Q b^2 a^2 |\psi_x|^2 \sigma w_x^2\right) \\
%		\leq C\left[ \int_Q e^{2s\varphi} |h|^2 + s^3\lambda^3 \int_0^T \int_{\omega'} b^2\sigma^3 w^2 + s\lambda \int_0^T \int_{\omega'} b^2\sigma w_x^2  \right].
%	\end{multline*}	 	 
	From the inequalities bellow,
%	$$s \lambda^2 e^{2s\varphi}b^2 a^2 |\psi_x|^2 \sigma v_x^2\leq  C\left( s^3 \lambda^4 b^2 a^2|\psi_x|^4\sigma^3 w^2 + s \lambda^2 b^2 a^2|\psi_x|^2\sigma w_x^2\right),$$ 
	$$s \lambda^2\int_Q e^{2s\varphi}b^2 a^2 |\psi_x|^2 \sigma v_x^2\leq  C\left( s^3 \lambda^4 \int_Q b^2 a^2|\psi_x|^4\sigma^3 w^2 + s \lambda^2 \int_Qb^2 a^2|\psi_x|^2\sigma w_x^2\right),$$
	$$s \lambda \int_0^T \int_0^{\alpha'} b^2 a \sigma w_x^2=
	s \lambda \int_0^T \int_0^{\alpha'} b^2 a \sigma (s^2\lambda^2e^{2s\varphi}|\psi_x|^2\sigma^2  v^2+2s\lambda e^{2s\varphi}|\psi_x|\sigma  v v_x+e^{2s\varphi}v_x^2),$$
    we get 
	\begin{multline*}
		J:=s^3 \lambda^3 \int_0^T \int_0^{\alpha'} e^{2s\varphi} \frac{x^2}{a} b^2\sigma^3 v^2 +s \lambda \int_0^T \int_0^{\alpha'} e^{2s\varphi} b^2 a \sigma v_x^2+s^3 \lambda^4 \int_Q e^{2s\varphi}b^2 a^2|\psi_x|^4 \sigma^3 v^2
		+s \lambda^2 \int_Q e^{2s\varphi}b^2 a^2 |\psi_x|^2 \sigma v_x^2\\
		\leq C \left[  s^3 \lambda^3 \int_0^T \int_0^{\alpha'} b^2 \frac{x^2}{a} \sigma^3 w^2 +s \lambda \int_0^T \int_0^{\alpha'} b^2 a \sigma w_x^2+s^3 \lambda^4 \int_Q b^2 a^2|\psi_x|^4 \sigma^3 w^2+s \lambda^2 \int_Q b^2 a^2 |\psi_x|^2 \sigma w_x^2 \right] \\
%		\leq C\left[ \int_Q e^{2s\varphi} |h|^2 + s^3\lambda^3 \int_0^T \int_{\omega'} b^2\sigma^3 w^2 + s\lambda \int_0^T \int_{\omega'} b^2\sigma w_x^2  \right]
	\end{multline*}
    Thus, from Lemma \ref{lemaA9}, we obtain
%	\begin{align*}
%		s^3 \lambda^3 \int_0^T \int_0^{\alpha'} e^{2s\varphi} \frac{x^2}{a} b^2\sigma^3 v^2 +s \lambda \int_0^T \int_0^{\alpha'} e^{2s\varphi} b^2 a \sigma v_x^2+s^3 \lambda^4 \int_Q e^{2s\varphi}b^2 a^2|\psi_x|^4 \sigma^3 v^2\\
%		+s \lambda^2 \int_Q e^{2s\varphi}b^2 a^2 |\psi_x|^2 \sigma v_x^2
%		\leq C\left[ \int_Q e^{2s\varphi} |h|^2 + s^3\lambda^3 \int_0^T \int_{\omega'} b^2\sigma^3 w^2 + s\lambda \int_0^T \int_{\omega'} b^2\sigma w_x^2  \right]
%	\end{align*}
	\begin{equation}\label{ecA4}
        \begin{split}
%		s^3 \lambda^3 \int_0^T \int_0^{\alpha'} e^{2s\varphi} \frac{x^2}{a} b^2\sigma^3 v^2 +s \lambda \int_0^T \int_0^{\alpha'} e^{2s\varphi} b^2 a \sigma v_x^2+s^3 \lambda^4 \int_Q e^{2s\varphi}b^2 a^2|\psi_x|^4 \sigma^3 v^2 +s \lambda^2 \int_Q e^{2s\varphi}b^2 a^2 |\psi_x|^2 \sigma v_x^2\\
		J &\leq C\left[ \int_Q e^{2s\varphi} |h|^2 + s^3\lambda^3 \int_0^T \int_{\omega'} b^2\sigma^3 w^2 + s\lambda \int_0^T \int_{\omega'} b^2\sigma w_x^2  \right] \\
        &\leq C\left[ \int_Q e^{2s\varphi} |h|^2 + s^3\lambda^3 \int_0^T \int_{\omega'} b^2\sigma^3 w^2 + s\lambda \int_0^T \int_{\omega'} b^2\sigma \left(  s^2\lambda^2b^2\sigma^2 e^{2s\varphi}v^2+ e^{2s\varphi}b^2av_x^2\right)  \right]\\
		&\leq C\left[ \int_Q e^{2s\varphi} |h|^2 + s^3\lambda^3 \int_0^T \int_{\omega'} e^{2s\varphi}b^2\sigma^3 v^2 + s\lambda \int_0^T \int_{\omega'} ab^2\sigma v_x^2  \right]
        \end{split}
	\end{equation}
%	\begin{multline}\label{ecA4}
%		s^3 \lambda^3 \int_0^T \int_0^{\alpha'} e^{2s\varphi} \frac{x^2}{a} b^2\sigma^3 v^2 +s \lambda \int_0^T \int_0^{\alpha'} e^{2s\varphi} b^2 a \sigma v_x^2+s^3 \lambda^4 \int_Q e^{2s\varphi}b^2 a^2|\psi_x|^4 \sigma^3 v^2
%		+ s \lambda^2 \int_Q e^{2s\varphi}b^2 a^2 |\psi_x|^2 \sigma v_x^2\\
%		\leq C\left[ \int_Q e^{2s\varphi} |h|^2 + s^3\lambda^3 \int_0^T \int_{\omega'} e^{2s\varphi}b^2\sigma^3 v^2 + s\lambda \int_0^T \int_{\omega'} ab^2\sigma v_x^2  \right]
%	\end{multline}
    To get the result we estimate the last integral in the right-hand side of (\ref{ecA4}). First, let us take $\chi\in C_0^\infty(\omega)$ such that $0\leq \chi \leq 1$ and $\chi =1 $ in $\omega'$. Multiplying the equation in (\ref{adjunto2}) by $\lambda s e^{2s\varphi}\sigma v \chi$ and integrating over $(0,T)\times (0,1)$, we obtain
%	$$\lambda s\int_0^T \int_0^{1}  e^{2s\varphi}\sigma v v_t \chi +\lambda s \int_0^T \int_0^{1}e^{2s\varphi}\sigma b(t)\left(a(x)v_x\right)_x v\chi =\lambda s \int_0^T \int_0^{1}e^{2s\varphi} \sigma h v \chi $$
	\begin{equation}\label{ecA5}
        \lambda s\int_0^T \int_0^{1}  e^{2s\varphi}\sigma v v_t \chi +\lambda s \int_0^T \int_0^{1}e^{2s\varphi}\sigma b(t)\left(a(x)v_x\right)_x v\chi =\lambda s \int_0^T \int_0^{1}e^{2s\varphi} \sigma h v \chi.
        %\lambda s \int_0^T \int_0^{1}e^{2s\varphi}\sigma  b(t)\left(a(x)v_x\right)_x v\chi =\lambda s \int_0^T \int_0^{1}e^{2s\varphi} \sigma h v \chi-\lambda s\int_0^T \int_0^{1}  e^{2s\varphi}\sigma v v_t \chi
	\end{equation}
    Let us estimate the term %integral of right hand side of (\ref{ecA5})
    \begin{multline*}
        \left|  \int_0^T \int_0^{1}  e^{2s\varphi}\sigma v v_t \chi  \right|=\left| \frac{1}{2} \int_0^T \int_0^{1}  e^{2s\varphi}\sigma \frac{d (v^2)}{dt} \chi \right|=\left| -\frac{1}{2} \int_0^T \int_0^{1}  (e^{2s\varphi}\sigma\chi)_t v^2  \right|
        =\left| \frac{1}{2} \int_0^T \int_0^{1} e^{2s\varphi}\chi (2s\varphi_t\sigma+\sigma_t) v^2  \right|.
    \end{multline*}
    Then,
    \begin{equation}\label{ecA6}
        \left|  \int_0^T \int_0^{1}  e^{2s\varphi}\sigma v v_t \chi \right|\leq C s\int_0^T \int_\omega e^{2s\varphi}\sigma^3 v^2.
    \end{equation}
    On the other hand, % (\ref{ecA5})
    $$ \int_0^T \int_0^{1}e^{2s\varphi}\sigma b\left(a(x)v_x\right)_x v\chi=  \int_0^T \left[ e^{2s\varphi}\sigma b a(x)v_x v\chi \right]_{x=0}^{x=1}- \int_0^T \int_0^{1}\left(e^{2s\varphi}\sigma b v\chi \right)_x a v_x, $$
    %$$ \int_0^T \int_0^{1}e^{2s\varphi}\sigma b\left(a(x)v_x\right)_x v\chi=-  \int_0^T \int_0^{1}e^{2s\varphi} ab \sigma v_x^2\chi -  \int_0^T \int_0^{1} b\left(e^{2s\varphi}\sigma \chi \right)_x a v_x v,$$
    $$\int_0^T \int_0^{1}e^{2s\varphi} ab \sigma v_x^2\chi=-\int_0^T \int_0^{1}e^{2s\varphi}\sigma b\left(a(x)v_x\right)_x v\chi-\int_0^T \int_0^{1} b\left(e^{2s\varphi}\sigma \chi \right)_x a v_x v.$$
    Since $\varphi_x\leq C \sigma$ and $\sigma_x\leq C\sigma$ in $(0,T)\times \omega$, then
    \begin{equation}\label{ecA7}
		\left|  \int_0^T \int_0^{1} b\left(e^{2s\varphi}\sigma \chi \right)_x a v_x v \right|\leq C \int_0^T \int_0^{1} e^{2s\varphi} \sigma^2 |b(t)| |av_x| |v|\leq MC\int_0^T \int_\omega e^{2s\varphi} \sigma^2 |av_x| |v|
    \end{equation}

    Thus,
%    Using the inequalities (\ref{ecA5}) -- (\ref{ecA7}), we obtain
    $$s\lambda \int_0^T \int_\omega e^{2s\varphi} ab \sigma v_x^2 = s\lambda \int_0^T \int_0^{1} e^{2s\varphi} ab \sigma v_x^2\chi,$$
    and using the inequalities (\ref{ecA5}) -- (\ref{ecA7}), that $\lambda\leq \lambda^3$, $s^2\leq s^3$ and $m\leq b(t)$, and proceeding as in Lemma A.11 of \cite{DemarqueLimacoViana_deg_eq2018}, we have 
    \begin{align*}
        &s\lambda \int_0^T \int_\omega e^{2s\varphi} ab \sigma v_x^2	\leq  \left|  -s\lambda\int_0^T \int_0^{1}e^{2s\varphi}\sigma b\left(a(x)v_x\right)_x v\chi-s\lambda\int_0^T \int_0^{1} b\left(e^{2s\varphi}\sigma \chi \right)_x a v_x v  \right|\\
        &\hspace*{-0cm}\leq  s\lambda\int_0^T \int_0^{1} e^{2s\varphi}\sigma |v ||v_t| \chi +s \lambda  \int_0^T \int_0^{1}e^{2s\varphi} \sigma |h| |v| \chi  +s\lambda\int_0^T \int_0^{1} b\left(e^{2s\varphi}\sigma \chi \right)_x |a v_x|  |v|\\
        &\hspace*{-0cm}\leq Cs^2\lambda\int_0^T \int_\omega e^{2s\varphi}\sigma^3 v^2+s \lambda  \int_0^T \int_\omega  |e^{s\varphi} h| |e^{s\varphi}\sigma v| +Cs\lambda\int_0^T \int_\omega e^{2s\varphi} \sigma^2 |av_x| |v|\\
        &\leq Cs^2\lambda\int_0^T \int_\omega e^{2s\varphi}\sigma^3 v^2+\frac{1}{2}s \lambda  \int_0^T \int_\omega  e^{2s\varphi} h^2 +\frac{1}{2}s \lambda  \int_0^T \int_\omega  e^{2s\varphi}\sigma^2 v^2
        +Cs \lambda \int_0^T \int_\omega\left( e^{s\varphi}\sigma^{1/2}\sqrt{a}|v_x| \right)\left( e^{s\varphi}\sqrt{a}\sigma^{3/2}|v|  \right)\\
        &\leq Cs^2\lambda\int_0^T \int_\omega e^{2s\varphi}\sigma^3 v^2+C  \int_0^T \int_\omega   e^{2s\varphi} h^2 +\varepsilon Cs\lambda \int_0^T \int_\omega e^{2s\varphi} \sigma a v_x^2 + C_\varepsilon s\lambda\int_0^T \int_\omega e^{2s\varphi} a\sigma^3v^2 \\
        &\leq C\left( \int_0^T \int_\omega   e^{2s\varphi} h^2+  s^3\lambda^3\int_0^T \int_\omega e^{2s\varphi}\sigma^3 v^2 \right).    
    \end{align*}  
%    Let us take as $\varepsilon =\frac{1}{2}C.$ We know that $\lambda\leq \lambda^3$, $s^2\leq s^3$ and $m\leq b(t)$. Then,
%    \begin{equation*}
%    \begin{split}
%    		s\lambda \int_0^T \int_\omega e^{2s\varphi} ab \sigma v_x^2 &\leq Cs^3\lambda^3\int_0^T \int_\omega e^{2s\varphi}\sigma^3 v^2+C  \int_0^T \int_\omega   e^{2s\varphi} h^2 +\frac{1}{2}s\lambda \int_0^T \int_\omega e^{2s\varphi} \sigma a b v_x^2
%		+C_\varepsilon s\lambda\int_0^T \int_\omega e^{2s\varphi} a\sigma^3v^2\\
%        &\leq C\left( \int_0^T \int_\omega   e^{2s\varphi} h^2+  s^3\lambda^3\int_0^T \int_\omega e^{2s\varphi}\sigma^3 v^2 \right).        
%    \end{split}    
%    \end{equation*}
	This last estimate, together with estimate (\ref{ecA4}), completes the proof.	
\end{proof}

\noindent Now we are ready to prove Proposition \ref{prop_carleman_11}.

\begin{proof}[\bf Proof of Proposition \ref{prop_carleman_11}]
    Notice that
    \begin{align*}
        s^2\lambda^2\int_Qe^{2s\varphi}b^2\sigma^2v^2 &= s^2\lambda^2\int_Qb^2\sigma^2w^2 = \int_{Q}\left(\lambda^{3/2}s^{3/2}b\sigma^{3/2}\frac{x}{\sqrt{a}}w\right)\left( \lambda^{1/2}s^{1/2}b\sigma^{1/2}\frac{\sqrt{a}}{x}w \right) \\
        &\leq \frac{1}{2}s^3\lambda^3\int_{Q}b^2\sigma^3\frac{x^2}{a}w^2+\frac{1}{2}s\lambda\int_Qb^2\sigma \frac{a}{x^2}w^2=\mathcal{I}_1+\mathcal{I}_2.
    \end{align*}
   %$$s^2\lambda^2\int_Qe^{2s\varphi}b^2\sigma^2v^2=s^2\lambda^2\int_Qb^2\sigma^2w^2=\int_{Q}\left(\lambda^{3/2}s^{3/2}b\sigma^{3/2}\frac{x}{\sqrt{a}}w\right)\left( \lambda^{1/2}s^{1/2}b\sigma^{1/2}\frac{\sqrt{a}}{x}w \right)$$

    %$$\leq \frac{1}{2}s^3\lambda^3\int_{Q}b^2\sigma^3\frac{x^2}{a}w^2+\frac{1}{2}s\lambda\int_Qb^2\sigma \frac{a}{x^2}w^2=\mathcal{I}_1+\mathcal{I}_2,$$
    On the one hand, since $x^2/a$ is bounded in $\omega'$ and $x^2/a\leq a^2|\psi_x|^2$ in $[\beta',1]$, we have
    \begin{align*}
        \mathcal{I}_1= s^3\lambda^3 \left( \int_0^T \int_0^{\alpha'} b^2\sigma^3\frac{x^2}{a}w^2 +  \int_0^T\int_{\omega'} b^2\sigma^3\frac{x^2}{a}w^2 + \int_0^T \int_{\beta'}^1b^2\sigma^3\frac{x^2}{a}w^2 \right) \\
        \leq s^3\lambda^3 \left( \int_0^T \int_0^{\alpha'} b^2\sigma^3\frac{x^2}{a}w^2 + C \int_0^T\int_{\omega'} b^2\sigma^3w^2 + \int_0^T \int_{\beta'}^1a^2b^2\sigma^3|\psi_x|^4w^2 \right).
    \end{align*}
 %   $$\mathcal{I}_1=\int_0^T \int_0^{\alpha'} b^2\sigma^3\frac{x^2}{a}w^2 +  \int_0^T\int_{\omega'} b^2\sigma^3\frac{x^2}{a}w^2 + \int_0^T \int_{\beta'}^1b^2\sigma^3\frac{x^2}{a}w^2.$$

%    Since $x^2/a$ is bounded in $\omega'$ and $x^2/a\leq a^2|\psi_x|^2$ in $[\beta',1]$, we have
%    $$\mathcal{I}_1\leq \int_0^T \int_0^{\alpha'} b^2\sigma^3\frac{x^2}{a}w^2 +  \int_0^T\int_{\omega'} b^2\sigma^3w^2 + \int_0^T \int_{\beta'}^1a^2b^2\sigma^3|\psi_x|^4w^2.$$
    
    Therefore, using Lemma \ref{lemaA9}, one has
    $$\mathcal{I}_1\leq C\left[ \int_Q e^{2s\varphi} |h|^2 + s^3\lambda^3 \int_0^T \int_{\omega'} b^2\sigma^3 w^2 + s\lambda \int_0^T \int_{\omega'} b^2\sigma w_x^2  \right].$$
    
    {\bf Estimate for $\mathcal{I}_2$}
    We proceed as in the proof of Proposition A.1 of \cite{DemarqueLimacoViana_deg_eq2018}, using Hardy-Poincaré inequality,
    \begin{eqnarray*}
            \mathcal{I}_2&=&\frac{1}{2}s\lambda\int_Qb^2\sigma \frac{a}{x^2}w^2=\frac{1}{2}s\lambda\int_Q \frac{a}{x^2}(b\sigma^{1/2}w)^2\leq C s\lambda\int_Q a(b\sigma^{1/2}w)_x^2\\
            &\leq &C s\lambda\int_Q ab^2\left( \frac{1}{2}\sigma^{-1/2}\sigma_x w+\sigma^{1/2}w_x  \right)^2\leq C s\lambda\int_Q ab^2\left( \frac{1}{4}\sigma^{-1}\sigma_x^2 w^2+\sigma w_x^2  \right).
    \end{eqnarray*}
    
    Thus
    \begin{eqnarray*}
            \mathcal{I}_2&\leq &C s\lambda\int_Q ab^2\sigma^{-1}\sigma_x^2 w^2+C s\lambda\int_Q ab^2\sigma w_x^2\\
            &\leq &C s\lambda^3\int_Q ab^2 \sigma |\psi_x|^2 w^2+C s\lambda\int_Q ab^2\sigma w_x^2\\
            &\leq &C s\lambda^3 \left[\int_0^T \int_0^{\alpha'} ab^2 \sigma |\psi_x|^2 w^2 +  \int_0^T\int_{\omega'} ab^2 \sigma |\psi_x|^2 w^2 + \int_0^T \int_{\beta'}^1ab^2 \sigma |\psi_x|^2 w^2\right]\\
            && +C s\lambda \left[\int_0^T \int_0^{\alpha'} ab^2\sigma w_x^2 +  \int_0^T\int_{\omega'} ab^2\sigma w_x^2 + \int_0^T \int_{\beta'}^1ab^2\sigma w_x^2 \right]\\
            &\leq &C s\lambda^3 \left[\int_0^T \int_0^{\alpha'} b^2 \frac{x^2}{a}\sigma^3  w^2 +  \int_0^T\int_{\omega'} b^2 \sigma^3 w^2 +\lambda \int_0^T \int_{\beta'}^1a^2b^2 \sigma |\psi_x|^4 w^3\right]\\
            && +C s\lambda \left[\int_0^T \int_0^{\alpha'} ab^2\sigma w_x^2 +  \int_0^T\int_{\omega'} b^2\sigma w_x^2 + \lambda\int_0^T \int_{\beta'}^1a^2b^2|\psi_x|^2\sigma w_x^2 \right]\\
            &\leq& C\left[ \int_Q e^{2s\varphi} |h|^2 + s^3\lambda^3 \int_0^T \int_{\omega'} b^2\sigma^3 w^2 + s\lambda \int_0^T \int_{\omega'} b^2\sigma w_x^2  \right].
	\end{eqnarray*}
    
	Then
	$$s^2\lambda^2\int_Qe^{2s\varphi}b^2\sigma^2v^2\leq \mathcal{I}_1+\mathcal{I}_2\leq C\left[ \int_Q e^{2s\varphi} |h|^2 + s^3\lambda^3 \int_0^T \int_{\omega'} b^2\sigma^3 w^2 + s\lambda \int_0^T \int_{\omega'} b^2\sigma w_x^2  \right]$$
	using the same technique as the Lemma \ref{lemaA10}.
\end{proof}

%---------------- FIN LEMMAS REVISADO --------------------

To finish the proof of Proposition \ref{prop_carleman_1} we show that we can absorve the terms of order zero and one. 

\begin{proof} (of Carleman inequality)
    
If $v$ is a solution of \eqref{adjunto2_texto_art}, then $v$ is also a solution of \eqref{adjunto2} with $h = F + B\sqrt{a} v_x -c v$. In this case, applying Proposition \ref{prop_carleman_11}, there exist $C > 0$, $\lambda_0 > 0$ and $s_0 > 0$ such that $v$ satisfies, for all $s \geq s_0$ and $\lambda\geq \lambda_0$
	\begin{equation}\label{ecultimo}
	\int_{0}^{T}\int_{0}^{1}e^{2s\varphi}\left(  (s\lambda)\sigma a b^2 v_x^2+(s\lambda)^2\sigma^2b^2 v^2 \right)\leq C\left(	\int_{0}^{T}\int_{0}^{1} e^{2s\varphi} |h|^2+(\lambda s)^3\int_{0}^{T}\int_{\omega}e^{2s\varphi} \sigma^3  v^2  \right)
\end{equation}

Recalling that $c\in L^\infty(Q)$, $B\in L^\infty(Q)$, $\sigma \geq C > 0$ and $m\leq b(t)$, we have
\begin{eqnarray*}
	\int_{0}^{T}\int_{0}^{1} e^{2s\varphi} |h|^2&=&\int_{0}^{T}\int_{0}^{1} e^{2s\varphi} |F+B\sqrt{a} v_x -cv|^2\\
	&\leq &C\int_{0}^{T}\int_{0}^{1} e^{2s\varphi}|F|^2+\|B\|_{L^\infty(Q)}C\int_{0}^{T}\int_{0}^{1} e^{2s\varphi}a|v_x|^2+\|c\|_{L^\infty(Q)}C\int_{0}^{T}\int_{0}^{1} e^{2s\varphi}|v|^2\\
		&\leq &C\int_{0}^{T}\int_{0}^{1} e^{2s\varphi}|F|^2+C\int_{0}^{T}\int_{0}^{1} e^{2s\varphi}b^2\sigma a|v_x|^2+C\int_{0}^{T}\int_{0}^{1} e^{2s\varphi}b^2\sigma^2 |v|^2.
\end{eqnarray*}

Therefore, taking $\lambda_0$ and $s_0$ large enough, the last integral can be absorbed by the left-hand side of \eqref{ecultimo}, which complete the proof.

\end{proof}

\section{Carleman estimate for a system of degenerate non-autonomous system}
\label{sec:carleman_system_nonautonomous}

Now we prove a Carleman estimate for the system \eqref{eq:adjoint_PDE} which allows us to obtain the controllability with control acting only on one of the equations.

\begin{propo} %\cite{DemarqueLimacoViana_deg_sys2020}
There exist positive constants $C$, $\lambda_0$ and $s_0$ such that, for any $s \geq s_0$, $\lambda \geq \lambda_0$ and any $\phi^T \in L^2(Q)$, the corresponding solution $(\phi,\psi)$ of \eqref{eq:adjoint_PDE} satisfies
\begin{equation}\label{carleman}
I(\phi) + I(\psi) \leq C \left( \int_Q e^{2s\varphi} s^4 \lambda^4 \sigma^4 (|F_1|^2+|F_2|^2)dxdt + \int_{\omega\times(0,T)} e^{2s\varphi} s^8 \lambda^8 \sigma^8 \phi^2dxdt \right).
\end{equation}
\end{propo}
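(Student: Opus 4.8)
The plan is to run the scalar Carleman estimate of Theorem~\ref{prop_carleman_1} on each of the two equations of the adjoint system~\eqref{eq:adjoint_PDE}, to absorb the global cross terms in $\phi$ and $\psi$ thus produced, and finally to eliminate the local observation of $\psi$ by exploiting the coupling coefficient $b_{21}$ through the first equation --- the last step being where the hypothesis $\inf_{\omega_1\times[0,T]}b_{21}>0$ enters and where the real work lies. To begin, assume $\omega'\subset\subset\omega_1$ (possible since $\omega_1\subset\subset\omega$) and fix an intermediate $\omega_2$ with $\omega'\subset\subset\omega_2\subset\subset\omega_1$; all the weights (hence $I(\cdot)$) are built from $\omega'$. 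Multiplying each equation of~\eqref{eq:adjoint_PDE} by $-1$ puts it in the form of~\eqref{adjunto2_texto_art}, with source $h=-F_1+b_{21}\psi$ for the $\phi$-equation and $h=-F_2+b_{12}\phi$ for the $\psi$-equation: expanding $(d_i\sqrt a\,\cdot)_x=d_i\sqrt a\,(\cdot)_x+(d_i\sqrt a)_x(\cdot)$, the first-order piece and the bounded zeroth-order coefficients are absorbed exactly as the perturbation $B\sqrt a\,v_x-cv$ in the final step of the proof of Theorem~\ref{prop_carleman_1} (near $x=0$ the singular factor $(\sqrt a)_x$ is handled by Hardy--Poincar\'e as in Lemma~\ref{lemaA2_2}).

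Applying Theorem~\ref{prop_carleman_1} to $\phi$, keeping its local term over $\omega$, and to $\psi$ with the cut-off in its proof supported in $\omega_2$ (possible since $\omega'\subset\subset\omega_2$) so that its local term is over $\omega_2$, the right-hand sides contain, besides the sources $\int_Q e^{2s\varphi}|F_i|^2$ and the wanted local terms, the global quantities $\int_Q e^{2s\varphi}\psi^2$ and $\int_Q e^{2s\varphi}\phi^2$. Since $\sigma\ge C>0$ and $b\ge b_0>0$, each of these is $\le C(s\lambda)^{-2}I(\psi)$ (resp.\ $\le C(s\lambda)^{-2}I(\phi)$); adding the two estimates and taking $s_0,\lambda_0$ large absorbs them on the left, leaving
\begin{equation*}
I(\phi)+I(\psi)\le C\Bigl(\int_Q e^{2s\varphi}(|F_1|^2+|F_2|^2)+(s\lambda)^3\!\int_0^T\!\!\int_\omega e^{2s\varphi}\sigma^3\phi^2+(s\lambda)^3\!\int_0^T\!\!\int_{\omega_2} e^{2s\varphi}\sigma^3\psi^2\Bigr).
\end{equation*}

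The heart of the matter is to dispose of the last term. Set $\delta:=\inf\{b_{21}(x,t):(x,t)\in\omega_1\times[0,T]\}>0$, pick $\xi\in C_0^\infty(\omega_1)$ with $0\le\xi\le1$ and $\xi\equiv1$ on $\omega_2$, and put $\rho:=(s\lambda)^3e^{2s\varphi}\sigma^3$. Multiplying the first equation of~\eqref{eq:adjoint_PDE} by $\xi\rho\,\psi$ and integrating over $Q$, the coupling term contributes $\int_Q\xi\rho\,b_{21}\psi^2\ge\delta\int_Q\xi\rho\,\psi^2\ge\delta(s\lambda)^3\!\int_0^T\!\int_{\omega_2}e^{2s\varphi}\sigma^3\psi^2$, so
\begin{equation*}
\delta\int_Q\xi\rho\,\psi^2\le\int_Q\xi\rho\,\psi\bigl(F_1+\phi_t+b(a\phi_x)_x+(d_1\sqrt a\,\phi)_x-b_{11}\phi\bigr),
\end{equation*}
and one integrates by parts to remove every derivative of $\phi$ on the right. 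The $\phi_t$-term gives $-\int_Q\partial_t(\xi\rho\,\psi)\,\phi=-\int_Q\xi\rho_t\,\psi\phi-\int_Q\xi\rho\,\psi_t\,\phi$ (the $t$-boundary terms vanish since $\rho\to0$ at $t=0,T$, and crucially $b_{21}$ is never differentiated), and the $\psi_t$ appearing here is replaced through the \emph{second} equation by $-b(a\psi_x)_x-(d_2\sqrt a\,\psi)_x+b_{12}\phi+b_{22}\psi-F_2$; the resulting $(a\psi_x)_x$, along with $(a\phi_x)_x$ and $(d_1\sqrt a\,\phi)_x$, are integrated by parts once more in $x$ (all boundary terms vanish since $\mathrm{supp}\,\xi\subset\subset(0,1)$, where $a$ is bounded above and below). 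What remains is a sum of: source terms; cross terms $\int_Q(\text{weight})\,\phi\psi$ and $\int_Q(\text{weight})\,\phi_x\psi_x$ on $\omega_1$; and local integrals of $\psi^2$, $a\psi_x^2$, $\phi^2$, $a\phi_x^2$ on $\omega_1$. One now applies Young's inequality \emph{asymmetrically}, charging the $\psi$-quantities with light weights (so that the $\psi^2$-contributions are $\le\varepsilon\int_Q\xi\rho\,\psi^2$ and the $a\psi_x^2$-contributions are $\le\varepsilon I(\psi)$) and the $\phi$-quantities with the correspondingly heavy weights; the local $a\phi_x^2$ integrals so produced are then re-expressed, via a Caccioppoli-type estimate on $\omega_1$ (multiply the first equation by a localized weight times $\phi$ and integrate by parts), in terms of local $\phi^2$ integrals. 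Every $\psi$-term is thus absorbed --- the $\int_Q\xi\rho\,\psi^2$ one on the left, being finite because $\rho$ is bounded and $\psi\in L^2(Q)$, the $\varepsilon I(\psi)$ ones into the left of the displayed inequality of the previous paragraph --- while the $\phi$-terms accumulate into a local observation of $\phi$ over $\omega$ with some (large) power of $s\lambda\sigma$. This yields
\begin{equation*}
(s\lambda)^3\!\int_0^T\!\!\int_{\omega_2}e^{2s\varphi}\sigma^3\psi^2\le\varepsilon\bigl(I(\phi)+I(\psi)\bigr)+C_\varepsilon\!\int_Q e^{2s\varphi}s^4\lambda^4\sigma^4(|F_1|^2+|F_2|^2)+C_\varepsilon\!\int_0^T\!\!\int_\omega e^{2s\varphi}s^8\lambda^8\sigma^8\phi^2,
\end{equation*}
where the exponents $4$ and $8$ of~\eqref{carleman} are generous enough to cover all the powers generated; inserting this into the previous display and absorbing $\varepsilon(I(\phi)+I(\psi))$ gives~\eqref{carleman}.

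The main obstacle is the last step. Expressing $\psi$ through the first equation drags in $\phi_t$, $(a\phi_x)_x$ and $(d_1\sqrt a\,\phi)_x$, so a time derivative and two spatial derivatives must be moved off $\phi$; removing $\phi_t$ necessarily produces $\psi_t$, which forces a second appeal to the equations (hence to $(a\psi_x)_x$). Organising the weights $e^{2s\varphi},\sigma$ and the powers of $s\lambda$ --- via asymmetric Young's inequalities that keep all $\psi$-quantities light (so they fall under $I(\psi)$ or under the finite term $\int_Q\xi\rho\,\psi^2$) and push everything heavy onto $\phi$-quantities, which are then reduced from $\phi_x^2$ to $\phi^2$ by Caccioppoli estimates and collected into the local observation of $\phi$ --- is the delicate bookkeeping that makes the argument close; it is also the reason the local observation in~\eqref{carleman} carries the high powers $s^8\lambda^8\sigma^8$. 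The degeneracy at $x=0$ plays no role here, since $\omega_1$ and all cut-offs are compactly contained in $(0,1)$; it is relevant only to the scalar estimate, i.e.\ to Theorem~\ref{prop_carleman_1} itself.
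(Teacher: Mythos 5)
Your proposal is correct and follows essentially the same route as the paper: scalar Carleman on each equation with the coupling as a source, absorption of the global cross terms for large $s,\lambda$, and elimination of the local $\psi$-observation by multiplying the first equation by $(s\lambda)^3e^{2s\varphi}\sigma^3$ times a cut-off times $\psi$, substituting the second equation for the $\psi_t$ produced by the time integration by parts, and closing with asymmetric Young plus a Caccioppoli estimate for the local $a|\phi_x|^2$ terms, which is exactly how the exponents $4$ and $8$ arise in the paper. Your explicit nesting $\omega'\subset\subset\omega_2\subset\subset\omega_1\subset\subset\omega$ is in fact a little more careful than the paper's handling of where the local $\psi$-term lives relative to the set where $b_{21}$ is bounded below.
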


\begin{proof}
    We follow the method of Proposition 3 of \cite{DemarqueLimacoViana_deg_sys2020}.

In general we consider an adjoint system of the form:
\begin{equation}
\label{eq:adj_system_gen}
\begin{cases}
    -y_t - b(t)(a(x)y_x)_x - d_1(x,t)\sqrt{a}y_x + b_{11}y + b_{21}z = F_1,\qquad & \text{in } \Omega \times (0,T) \\
    -z_t - b(t)(a(x)z_x)_x - d_2(x,t)\sqrt{a}z_x + b_{12}y + b_{22}z = F_2,\qquad & \text{in } \Omega \times (0,T) \\
    y(t,0)=y(0,1)=z(t,0)=z(t,1)=0, \qquad & \text{on } (0,T) \\
    y(T,x)=y_T(x), \quad z(T,x)=z_T(x), \qquad & \text{in } \Omega, 
\end{cases}    
\end{equation}
with bounded coefficients. First, we rewrite the first equation of \eqref{eq:adj_system_gen} as
$$
-y_t - b(t)(a(x)y_x)_x - d_1(x,t)\sqrt{a}y_x + b_{11}y  = F_1 -b_{21}z,
$$
and applying Proposition \ref{prop_carleman_1} we obtain
$$
I(y) \leq C \left( \int_Q e^{2s \varphi}|F_1|^2 + \|b_{21}\|_{L^\infty} \int_Q e^{2s \varphi}|z|^2 + \int_{\omega \times (0,T)} e^{2s \varphi} \lambda^3 s^3 \sigma^3 |y|^2 \right).
$$

For the second equation of \eqref{eq:adj_system_gen} we get, analogously,
$$
I(z) \leq C \left( \int_Q e^{2s \varphi}|F_2|^2 + \|b_{12}\|_{L^\infty} \int_Q e^{2s \varphi}|y|^2 + \int_{\omega \times (0,T)} e^{2s \varphi} \lambda^3 s^3 \sigma^3 |z|^2 \right).
$$

Adding these two inequalities, and taking $s$ and $\lambda$ sufficiently large, so that $I(y)$ will absorb the term with integrand $e^{2s \varphi} |y|^2$ and $I(z)$ will absorb the term with integrand $e^{2s \varphi} |z|^2$, we get
$$
I(y)+I(z) \leq C \left( \int_Q e^{2s \varphi} (|F_1|^2 + |F_2|^2) + \int_{\omega \times (0,T)} e^{2s \varphi} \lambda^3 s^3 \sigma^3 (|y|^2 +|z|^2) \right).
$$

Thus, to get the result it is enough to show that for a small $\epsilon>0$, we have

$$
\int_{\omega \times (0,T)} e^{2s \varphi} \lambda^3 s^3 \sigma^3 |z|^2 \leq \epsilon I(z) + C \left( \int_Q e^{2s \varphi} s^4 \lambda^4 \sigma^4 (|F_1|^2 + |F_2|^2) + \int_{\omega \times (0,T)} e^{2s \varphi} \lambda^8 s^8 \sigma^8 |y|^2 \right).
$$

We take $\chi \in C_0^\infty(\omega)$ such that $0\leq \chi \leq 1$ and $\chi \equiv 1$ in $\omega_1$. Since $\inf b_{21} >0$ on $\omega_1$, we see that
$$
\int_{\omega_1 \times (0,1)} e^{2s\varphi} \lambda^3 s^3 \sigma^3 |z|^2 \leq C \int_{\omega \times (0,1)} \chi b_{21} e^{2s\varphi} \lambda^3 s^3 \sigma^3 |z|^2.
$$

Multiplying the first equation in \eqref{eq:adj_system_gen} by $e^{2s\varphi}\lambda^3 s^3 \sigma^3 \chi z$ and integrating over $Q$ we get
\begin{align*}
%\begin{split}
    \int_Q \chi b_{21} e^{2s\varphi} \lambda^3 s^3 \sigma^3 |z|^2 &= \int_Q \chi e^{2s\varphi} \lambda^3 s^3 \sigma^3 z F_1 + \int_Q \chi e^{2s\varphi} \lambda^3 s^3 \sigma^3 z y_t \\
    & +\int_Q \chi e^{2s\varphi} \lambda^3 s^3 \sigma^3 b(t) (a y_x)_x z + \int_Q \chi d_1 e^{2s\varphi} \lambda^3 s^3 \sigma^3 \sqrt{a} y_x z - \int_Q \chi b_{11} e^{2s\varphi} \lambda^3 s^3 \sigma^3 y z \\
    &= I_1 + I_2 + I_3 + I_4 + I_5.
%\end{split}    
\end{align*}

The integrals for $I_1$, $I_2$ and $I_5$ are the same as in Proposition 3 of \cite{DemarqueLimacoViana_deg_sys2020} and the integral $I_3$ has a $b(t)$ in the integrand  which does not affect the integrations by parts. In our case we get
\begin{equation*}
\begin{split}
I_3 = &\int_Q \lambda^3 s^3 b(t)(a(\chi e^{2s\varphi} \sigma^3)_x)_x yz - \int_Q
b(t) \chi \lambda^3 s^3 \sigma^3 e^{2s\varphi} a z_x y_x + \int_Q b(t) \lambda^3 s^3 (\chi e^{2s\varphi} \sigma^3)_x a z_x y,
\end{split}    
\end{equation*}
and, using the second equation in \eqref{eq:adj_system_gen}, we get 
\begin{equation*}
\begin{split}
I_2= &\int_Q \lambda^3 s^3 \left[ -\chi (e^{2s\varphi})_t - \chi \sigma^3 e^{2s\varphi}b_{22} \right] yz  + \int_Q \chi \lambda^3 s^3 \sigma^3 e^{2s\varphi} \left[ b(t) (a z_x)_x y + d_2 \sqrt{a} z_x y - b_{12}y^2 + y F_2 \right] \\
= & \int_Q \lambda^3 s^3 \left[ -\chi (e^{2s\varphi})_t - \chi \sigma^3 e^{2s\varphi}b_{22} \right] yz - \int_Q b(t) \lambda^3 s^3 (\chi e^{2s\varphi} \sigma^3)_x a z_x y - \int_Q b(t) \chi \lambda^3 s^3 \sigma^3 e^{2s\varphi} a z_x y_x \\
&+ \int_Q \chi \lambda^3 s^3 \sigma^3 e^{2s\varphi} d_2 \sqrt{a} z_x y -\int_Q \chi \lambda^3 s^3 \sigma^3 e^{2s\varphi}b_{12}y^2 + \int_Q \chi \lambda^3 s^3 \sigma^3 e^{2s\varphi} y F_2.
\end{split}    
\end{equation*}

Thus,
\begin{equation}\label{eq:I2_I3}
\begin{split}
I_2 + I_3 &= \int_Q \lambda^3 s^3 \left[ -\chi (e^{2s\varphi})_t - \chi \sigma^3 e^{2s\varphi}b_{22}  + b(t)(a(\chi e^{2s\varphi} \sigma^3)_x)_x \right] yz + \int_Q \chi \lambda^3 s^3 \sigma^3 e^{2s\varphi} d_2 \sqrt{a} z_x y \\
&- 2 \int_Q b(t) \chi \lambda^3 s^3 \sigma^3 e^{2s\varphi} a z_x y_x -\int_Q \chi \lambda^3 s^3 \sigma^3 e^{2s\varphi}b_{12}y^2 + \int_Q \chi \lambda^3 s^3 \sigma^3 e^{2s\varphi} y F_2 \\
&= J_1 + J_2 + J_3 + J_4 + J_5.
\end{split}    
\end{equation}

Using the fact that $m \leq b(t) \leq M$, the estimates of $J_1$, $J_3$, $J_4$ and $J_5$ are the same as in Proposition 3 of \cite{DemarqueLimacoViana_deg_sys2020}. On the oher hand, by Young's inequality
\begin{equation*}
\begin{split}
J_2 &\leq \frac{C \epsilon}{m^2} \int_Q \lambda s \sigma e^{2s\varphi} b^2 a z_x^2 + C_\epsilon \int_Q \chi \lambda^5 s^5 \sigma^5 e^{2s\varphi} y^2 \\
&\leq \epsilon I(z) + C_\epsilon \int_{\omega \times (0,1)} e^{2s\varphi} \lambda^8 s^8 \sigma^8 |y|^2.    
\end{split}
\end{equation*}

Finally, we estimate $I_4$. Using that $d_1$ is bounded and by Young's inequality, we have
$$
I_4 \leq \frac{C \epsilon}{m^2}
\int_Q e^{2s\varphi} \lambda^2  s^2 \sigma^2 b^2 |z|^2 + C_\epsilon \int_Q e^{2s\varphi} \lambda^4  s^4 \sigma^4 a |y_x|^2 \leq \epsilon I(z) + C \int_Q e^{2s\varphi} \lambda^4  s^4 \sigma^4 a |y_x|^2,
$$
and the last term is estimated as in Proposition 3 of \cite{DemarqueLimacoViana_deg_sys2020}, by multiplying the first equation of \eqref{eq:adj_system_gen} by $\chi^2 e^{2s\varphi}\lambda^5 s^5 \sigma^5 y$ and integrating in $Q$. Integration by parts in the term $(a y_x)_x$ allows us to estimate $\int_Q e^{2s\varphi} \lambda^4  s^4 \sigma^4 a |y_x|^2$ by integrals of the form already estimated in \eqref{eq:I2_I3}.  
%\textcolor{red}{Después mando las cuentas de esta estimativa}
%...    
\end{proof}

In order to get the global null controllability of the linearized system, we need a Carleman inequality with weights which do not vanish at $t=0$. Consider the function $m \in C^\infty([0,T])$ satisfying $m(0)>0$,
\begin{equation}
\label{eq:def_m}
m(t) \geq t^4(T-t)^4, \quad t \in (0,T/2], \qquad\qquad m(t) = t^4(T-t)^4, \quad t \in [T/2,T],    
\end{equation}
and take
$$
\tau(t) = \frac{1}{m(t)}, \qquad \zeta(x,t) = \tau(t) \eta(x), \qquad A(x,t)  = \tau(t) (e^{\lambda(|\Psi|_\infty + \Psi)}-e^{3\lambda|\Psi|_\infty}).
$$

We use the following notation:
$$
\Gamma_0(\phi,\psi) = \int_Q e^{2s A}((s\lambda) \zeta b^2 a (|\phi_x|^2 + |\psi_x|^2) + (s\lambda)^2 {\zeta}^2 b^2 (|\phi|^2 + |\psi|^2))dxdt.
$$
%$$
%\Gamma(\phi,\varrho,\tilde \varrho) = \int_Q e^{2s A}((s\lambda) \zeta b^2 a (|\phi_x|^2+|\varrho_x|^2+|\tilde \varrho_x|^2) + (s\lambda)^2 {\zeta}^2 b^2 (|\phi|^2 + |\varrho|^2 + |\tilde \varrho|^2))dxdt.
%$$

\begin{propo} %\cite{DemarqueLimacoViana_deg_sys2020}
\label{prop:carleman}
There exist positive constants $C$, $\lambda_0$ and $s_0$ such that, for any $s \geq s_0$, $\lambda \geq \lambda_0$ and any $\phi^T\in L^2(Q)$, the corresponding solution $(\phi,\psi)$ of \eqref{eq:adjoint_PDE} satisfies
$$
\Gamma_0(\phi,\psi) \leq C \left( \int_Q e^{2s A} s^4 \lambda^4 \zeta^4 (|F_1|^2+|F_2|^2)dxdt + \int_{\omega \times (0,T)} e^{2s A} s^8 \lambda^8 \zeta^8 |\phi|^2 dxdt\right).
$$
\end{propo}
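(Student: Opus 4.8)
The plan is to deduce Proposition~\ref{prop:carleman} from the Carleman estimate \eqref{carleman}, whose weight $e^{2s\varphi}$ degenerates as $t\to 0$, by trading that degeneracy for an energy estimate on the part of the cylinder where the new weight is harmless. The key structural facts are: by \eqref{eq:def_m} we have $m(t)=t^4(T-t)^4$ on $[T/2,T]$, hence $\tau\equiv\theta$ and therefore $A\equiv\varphi$, $\zeta\equiv\sigma$ on $\overline\Omega\times[T/2,T]$; whereas on $[0,T/2]$, since $m$ is continuous and strictly positive there ($m(0)>0$), the functions $\tau$, $\zeta$ and $e^{2sA}$ are bounded above and below by positive constants (depending on the fixed $s,\lambda$ and on $T$). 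Accordingly I would split $\Gamma_0(\phi,\psi)=\Gamma_0^{(1)}+\Gamma_0^{(2)}$ into its integrals over $\Omega\times(0,T/2)$ and $\Omega\times(T/2,T)$. On the second piece the weights coincide with the old ones, so $\Gamma_0^{(2)}\le I(\phi)+I(\psi)$ by nonnegativity of the integrand, and this is already controlled by \eqref{carleman}. The whole problem is thus to estimate $\Gamma_0^{(1)}$, where the weight does no work and must be replaced by a dissipation estimate.

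To that end I would multiply the $\phi$- and $\psi$-equations of \eqref{eq:adjoint_PDE} by $\phi$ and $\psi$, integrate over $\Omega$, and use the Dirichlet conditions; the drift terms are absorbed through $|d_i\sqrt a\,\phi\,\phi_x|\le\frac{b_0}{2}a\phi_x^2+C\phi^2$, the coupling and zero-order terms by $C(|\phi|^2+|\psi|^2)$ (all $b_{ij}$ bounded), and the diffusion term keeps its sign, $b(t)\int_\Omega a(|\phi_x|^2+|\psi_x|^2)\ge b_0\int_\Omega a(|\phi_x|^2+|\psi_x|^2)$. This gives, for a.e.\ $t\in(0,T)$,
\[
-\frac{d}{dt}\Big(\|\phi(t)\|_{L^2}^2+\|\psi(t)\|_{L^2}^2\Big)+b_0\int_\Omega a\big(|\phi_x|^2+|\psi_x|^2\big)\,dx\le C\Big(\|\phi\|_{L^2}^2+\|\psi\|_{L^2}^2+\|F_1\|_{L^2}^2+\|F_2\|_{L^2}^2\Big).
\]
Dropping the nonnegative diffusion term and applying Grönwall's lemma backward in time, and separately integrating the differential inequality over an interval, yields for $0\le t\le t'\le T$ a bound of $\|\phi(t)\|_{L^2}^2+\|\psi(t)\|_{L^2}^2$ and of $\int_t^{t'}\int_\Omega a(|\phi_x|^2+|\psi_x|^2)$ by $C\big(\|\phi(t')\|_{L^2}^2+\|\psi(t')\|_{L^2}^2+\int_0^{t'}\int_\Omega(|F_1|^2+|F_2|^2)\big)$. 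All these manipulations are licit in the functional framework of Section~\ref{subsec:prelim} and Appendix~\ref{appendix A}, where $\int_\Omega a|\phi_x|^2<\infty$.

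Next I would choose the intermediate time $t'$. Because $e^{2s\varphi}$, $\sigma$ and $b$ are bounded below by positive constants on $\overline\Omega\times[T/2,3T/4]$, we get $\int_{T/2}^{3T/4}\int_\Omega(|\phi|^2+|\psi|^2)\le C\,(I(\phi)+I(\psi))$, so by the mean value theorem there is $t'\in[T/2,3T/4]$ with $\|\phi(t')\|_{L^2}^2+\|\psi(t')\|_{L^2}^2\le C\,(I(\phi)+I(\psi))$. Inserting this $t'$ (which is $\ge T/2$, so the bound above holds on all of $[0,T/2]$) and integrating in $t$ over $(0,T/2)$ controls $\int_0^{T/2}\int_\Omega\big(|\phi|^2+|\psi|^2+a(|\phi_x|^2+|\psi_x|^2)\big)$ by $C\,(I(\phi)+I(\psi))+C\int_0^{3T/4}\int_\Omega(|F_1|^2+|F_2|^2)$. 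Since $e^{2sA}$, $\zeta$, $b$ are bounded above on $\Omega\times(0,T/2)$, the left-hand side dominates $\Gamma_0^{(1)}$; adding $\Gamma_0^{(2)}\le I(\phi)+I(\psi)$ and applying \eqref{carleman} bounds $\Gamma_0(\phi,\psi)$ by $C$ times $\int_Q e^{2s\varphi}s^4\lambda^4\sigma^4(|F_1|^2+|F_2|^2)+\int_{\omega\times(0,T)}e^{2s\varphi}s^8\lambda^8\sigma^8|\phi|^2+\int_0^{3T/4}\int_\Omega(|F_1|^2+|F_2|^2)$. Finally I would re-express everything in the target weights: on $[T/2,T]$, $e^{2s\varphi}\sigma^j=e^{2sA}\zeta^j$; on $[0,T/2]$, $\varphi\le A$ (since $\theta\ge\tau>0$ there and the common spatial factor $e^{\lambda(|\Psi|_\infty+\Psi)}-e^{3\lambda|\Psi|_\infty}$ is negative), while $\sigma^j$ is bounded above and $e^{2sA}\zeta^j$ bounded below, so $e^{2s\varphi}\sigma^j\le C\,e^{2sA}\zeta^j$; and, $e^{2sA}\zeta^4$ being bounded below on $\overline\Omega\times[0,3T/4]$, the weightless source integral is absorbed into $\int_Q e^{2sA}s^4\lambda^4\zeta^4(|F_1|^2+|F_2|^2)$. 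This yields the stated estimate.

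The main obstacle is the bookkeeping of the two weight families across $t=T/2$ and the check that the lower-order terms generated by the energy estimate do not spoil the structure, together with the (standard) fact that the source and observation terms on the right of \eqref{carleman}, which carry the degenerate weights $e^{2s\varphi}\sigma^j$, can be upgraded to $e^{2sA}\zeta^j$ only at the price of a constant depending on the now-fixed $s_0,\lambda_0$ — harmless for the null-controllability argument of Section~\ref{sec:global_null_controllability}. A secondary point is the justification of the energy identity for the degenerate adjoint system, which rests on the $H^1_a$/$H^2_a$ theory recalled in Section~\ref{subsec:prelim} and established in Appendix~\ref{appendix A}.
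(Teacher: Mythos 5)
Your proposal is correct and follows essentially the same route as the paper: the same splitting of $\Gamma_0$ at $t=T/2$, control of the piece on $(T/2,T)$ by the Carleman estimate \eqref{carleman} since $A=\varphi$ and $\zeta=\sigma$ there, and an energy/Grönwall estimate propagating backward from an intermediate time where the Carleman weight is bounded below to control the piece on $(0,T/2)$. The only cosmetic difference is that you select a single intermediate time $t'\in[T/2,3T/4]$ by the mean value theorem, whereas the paper averages by integrating the differential inequality from $t$ to $t+T/4$ and then again over $[0,T/2]$ and $[T/2,3T/4]$; the two devices are equivalent.
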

\begin{proof}
%    We follow the computations of Proposition 4 of \cite{DemarqueLimacoViana_deg_sys2020}.
As the previous proposition, we work in a general system of the form \eqref{eq:adj_system_gen} in the variables $(y,z)$. 
We decompose the integral $\Gamma_0(y,z)$ as	
	$$\Gamma_0(y,z)=
	\Gamma^1_0(y,z)+\Gamma^2_0(y,z)$$
and define the following operators:
$$
\Gamma^1_0(y,z) =\int_{0}^{T/2}\int_{0}^{1} e^{2s A}((s\lambda) \zeta b^2 a (|y_x|^2 + |z_x|^2) + (s\lambda)^2 {\zeta}^2 b^2 (|y|^2 + |z|^2))dxdt,
$$
$$
	\Gamma^2_0(y,z) =\int_{T/2}^{T}\int_{0}^{1} e^{2s A}((s\lambda) \zeta b^2 a (|y_x|^2 + |z_x|^2) + (s\lambda)^2 {\zeta}^2 b^2 (|y|^2 + |z|^2))dxdt.
$$

The strategy is to prove that, for $i=1,2$,
$$
\Gamma^i_0(y,z)\leq C \left( \int_Q e^{2sA} s^4 \lambda^4 \zeta^4 (|F_1|^2+|F_2|^2)dxdt + \int_{\omega \times (0,T)} e^{2sA} s^8 \lambda^8 \zeta^8 y^2 dxdt \right).
$$
%	$$\Gamma^2_0(y,z)\leq C \left( \int_Q e^{2sA} s^4 \lambda^4 \zeta^4 (|F_1|^2+|F_2|^2)dxdt + \int_{O\times(0,T)} e^{2sA} s^8 \lambda^8 \zeta^8 \phi^2dxdt \right)
%    $$
    
To estimate $\Gamma^2_0(y,z)$, let us observe that $e^{2s\varphi}\sigma^n\leq Ce^{2sA}\zeta^n$ for all $(x,t)\in [0,1]\times[0,T]$ and $n\geq 0$. Since $\tau=\theta$ and $A=\varphi$ in $[T/2, T]$, Carleman inequality \eqref{carleman} implies	
$$\Gamma^2_0(y,z) \leq C \left( \int_Q e^{2sA} s^4 \lambda^4 \zeta^4 (|F_1|^2+|F_2|^2)dxdt + \int_{O\times(0,T)} e^{2sA} s^8 \lambda^8 \zeta^8 |y|^2dxdt \right).$$
     
Now, we will prove an analogous estimate for $\Gamma^1_0(y,z)$ arguing as in \cite{DemarqueLimacoViana_deg_sys2020}. Multiplying the first and the second equations of \eqref{eq:adj_system_gen} by $y$ and $z$, respectively, and integrating over $[0, 1]$, we obtain	
	
$$-\int_{0}^{1}y_ty - \int_{0}^{1}b(t)(a(x)y_x)_x y - \int_{0}^{1}d_1(x,t)\sqrt{a}y y_x + \int_{0}^{1}b_{11}|y|^2 + \int_{0}^{1}b_{21}|z||y| =\int_{0}^{1} |F_1| |y| $$
$$-\int_{0}^{1}z_tz - \int_{0}^{1}b(t)(a(x)z_x)_x z - \int_{0}^{1}d_2(x,t)\sqrt{a}z z_x + \int_{0}^{1}b_{12}|y||z| + \int_{0}^{1}b_{22}|z|^2 =\int_{0}^{1} |F_2| |z|.$$

Thus, since $b$ and $d_i$, $i=1,2$, are bounded, for sufficiently small $\epsilon>0$, 
\begin{multline}\label{ecsiste3102}
    -\frac{1}{2}\frac{d}{dt}\left(\|y\|^2_{L^2(0,1)}+\|z\|^2_{L^2(0,1)}\right)-C_\epsilon\left(\|y\|^2_{L^2(0,1)}+\|z\|^2_{L^2(0,1)}\right) - \epsilon \left(\|\sqrt{a}y_x\|^2_{L^2(0,1)}+\|\sqrt{a}z_x\|^2_{L^2(0,1)}\right) \\    + m\left(\|\sqrt{a}y_x\|^2_{L^2(0,1)}+\|\sqrt{a}z_x\|^2_{L^2(0,1)}\right) \leq \|F_1\|^2_{L^2(0,1)}+\|F_2\|^2_{L^2(0,1)}, 
\end{multline}
which implies, for $C>0$, 
$$-\frac{d}{dt}\left[e^{Ct}\left(\|y\|^2_{L^2(0,1)}+\|z\|^2_{L^2(0,1)}\right)\right]\leq e^{Ct}\left(\|F_1\|^2_{L^2(0,1)}+\|F_2\|^2_{L^2(0,1)}\right)$$

We can continue as in \cite{DemarqueLimacoViana_deg_sys2020}.
Integrating from a $t\in [0,T/2]$ to $t+T/4$, we get
%$$-\int_{t}^{t+T/4}\frac{d}{dt}\left[e^{Ct}\left(\|y\|^2_{L^2(0,1)}+\|z\|^2_{L^2(0,1)}\right)\right]dt\leq\int_{t}^{t+T/4} e^{Ct}\left(\|F_1\|^2_{L^2(0,1)}+\|F_2\|^2_{L^2(0,1)}\right)dt$$
\begin{multline*}
    e^{Ct}\left(\|y\|^2_{L^2(0,1)}+\|z\|^2_{L^2(0,1)}\right)-e^{C(t+T/4)}\left(\|y(t+T/4)\|^2_{L^2(0,1)}+\|z(t+T/4)\|^2_{L^2(0,1)}\right)\\ \leq\int_{t}^{t+T/4} e^{Ct}\left(\|F_1\|^2_{L^2(0,1)}+\|F_2\|^2_{L^2(0,1)}\right)dt 
\end{multline*}
%\vspace*{-1.2cm}
Thus, 
\begin{multline*}
\|y\|^2_{L^2(0,1)}+\|z\|^2_{L^2(0,1)} \leq e^{CT}\int_{0}^{3T/4} \left(\|F_1\|^2_{L^2(0,1)}+\|F_2\|^2_{L^2(0,1)}\right)dt\\
 +e^{3CT/4}\left(\|y(t+T/4)\|^2_{L^2(0,1)}+\|z(t+T/4)\|^2_{L^2(0,1)}\right)
\end{multline*}

Integrating again from 0 to $T/2$, we have	
\begin{equation}\label{ecsis3112}
\begin{split}
\int_{0}^{T/2}\left(\|y\|^2_{L^2(0,1)}+\|z\|^2_{L^2(0,1)}\right) %&\leq 
%\int_{0}^{T/2}\left[e^{CT}\int_{0}^{3T/4} \left(\|F_1(t)\|^2_{L^2(0,1)}+\|F_2(t)\|^2_{L^2(0,1)}\right)dt\right]dt\\
%&+\int_{0}^{T/2}e^{3CT/4}\left(\|y(t+T/4)\|^2_{L^2(0,1)}+\|z(t+T/4)\|^2_{L^2(0,1)}\right)\\
%\end{multline*}
%\vspace*{-1.0cm}
%\begin{multline}\label{ecsis3112}
%\int_{0}^{T/2}\left(\|y\|^2_{L^2(0,1)}+\|z\|^2_{L^2(0,1)}\right) 
\leq& \frac{T}{2}e^{CT}\int_{0}^{3T/4} \left(\|F_1\|^2_{L^2(0,1)}+\|F_2\|^2_{L^2(0,1)}\right)dt\\
&+e^{3CT/4}\int_{T/4}^{3T/4}\left(\|y(t)\|^2_{L^2(0,1)}+\|z(t)\|^2_{L^2(0,1)}\right)
\end{split}
\end{equation}

Now, integrating inequality \eqref{ecsiste3102} over $[0,t]$, where $t\in [0, T]$, we get
\begin{multline}\label{ecsiste3122}	\tilde m \int_{0}^{t}\left(\|\sqrt{a}y_x\|^2_{L^2(0,1)}+\|\sqrt{a}z_x\|^2_{L^2(0,1)}\right)\leq\frac{1}{2} \left(\|y\|^2_{L^2(0,1)}+\|z\|^2_{L^2(0,1)}\right)\\
+C\left[ \int_{0}^{t}\left(\|y\|^2_{L^2(0,1)}+\|z\|^2_{L^2(0,1)}\right)+ \int_{0}^{t}\left(\|F_1\|^2_{L^2(0,1)}+\|F_2\|^2_{L^2(0,1)}\right)\right].
\end{multline}

%Consider the double integral 
%$$I=\int_{T/2}^{3T/4}\int_{0}^{t}\left(\|\sqrt{a}y(\tau)\|^2_{L^2(0,1)}+\|\sqrt{a}z(\tau)\|^2_{L^2(0,1)}\right)d\tau dt$$
%and define
Since 
$J(t)=\int_{0}^{t}\left(\|\sqrt{a}y_x(\tau)\|^2_{L^2(0,1)}+\|\sqrt{a}z_x(\tau)\|^2_{L^2(0,1)}\right) d\tau$ is non-decreasing, for $t$ in the interval $[T/2,3T/4]$, we get
%Since $J(t)$ is a non-decreasing function, 
%(as increasing  integrates more non-negative terms), 
%for $t$ in the interval $[T/2,3T/4]$, we have that
%. For all $t\in[T/2,3T/4] $, we have 
%$J(t)\geq J(T/2)$.
%Similarly, for all $t\in[T/2,3T/4] $, we have  $J(t)\leq  J(3T/4)$.
%Using these properties, we can bound the integral  as follows:\\
%For , $t\in[T/2,3T/4] $, we have $J(t)\geq J(T/2)$. 
%Thus,
%$$I=\int_{T/2}^{3T/4}J(t)dt\geq \int_{T/2}^{3T/4}J(T/2)dt=\frac{T}{4}J(T/2)$$
%That is,
%$$I\geq \frac{T}{4}\int_{0}^{T/2}\left(\|\sqrt{a}y(\tau)\|^2_{L^2(0,1)}+\|\sqrt{a}z(\tau)\|^2_{L^2(0,1)}\right)d\tau$$
%Similarly, for all $t\in[T/2,3T/4] $, we have  $J(t)\leq  J(3T/4)$. Thus,
%$$I\leq \frac{T}{4}\int_{0}^{3T/4}\left(\|\sqrt{a}y(\tau)\|^2_{L^2(0,1)}+\|\sqrt{a}z(\tau)\|^2_{L^2(0,1)}\right)d\tau$$
%Combining both estimates, we obtain:
\begin{equation}\label{eq:compaJ}
\int_{0}^{T/2}\left(\|\sqrt{a}y_x(\tau)\|^2_{L^2(0,1)}+\|\sqrt{a}z_x(\tau)\|^2_{L^2(0,1)}\right)d\tau \leq J(t) \leq \int_{0}^{3T/4}\left(\|\sqrt{a}y_x(\tau)\|^2_{L^2(0,1)}+\|\sqrt{a}z_x(\tau)\|^2_{L^2(0,1)}\right)d\tau    
\end{equation}
%\hspace{-0.5cm}
%and
%$$
%\int_{0}^{T/2}\left(\|y(\tau)\|^2_{L^2(0,1)}+\|z(\tau)\|^2_{L^2(0,1)}\right)d\tau \leq G(t) \leq \int_{0}^{3T/4}\left(\|y(\tau)\|^2_{L^2(0,1)}+\|z(\tau)\|^2_{L^2(0,1)}\right)d\tau.
%$$

Thus, integrating inequality \eqref{ecsiste3122} from $T/2$ to $3T/4$ 
%and using \eqref{ecsis3112}
\begin{align*}
\tilde m \int_{T/2}^{3T/4} J(t)
%\int_{0}^{t}\left(\|\sqrt{a}y(\tau)\|^2_{L^2(0,1)}+\|\sqrt{a}z(\tau)\|^2_{L^2(0,1)}\right)d\tau dt 
&\leq \frac{1}{2}\int_{T/2}^{3T/4} \left(\|y\|^2_{L^2(0,1)}+\|z\|^2_{L^2(0,1)}\right)\\
&+C\left[\int_{T/2}^{3T/4} \int_{0}^{3T/4}\left(\|y\|^2_{L^2(0,1)}+\|z\|^2_{L^2(0,1)}\right)+ \int_{T/2}^{3T/4}\int_{0}^{3T/4}\left(\|F_1\|^2_{L^2(0,1)}+\|F_2\|^2_{L^2(0,1)}\right)\right] \\
&\leq \frac{1}{2}\int_{T/2}^{3T/4} \left(\|y\|^2_{L^2(0,1)}+\|z\|^2_{L^2(0,1)}\right)\\
&+\tilde C\left[\left(\int_{0}^{T/2}+ \int_{T/2}^{3T/4} \right)\left(\|y\|^2_{L^2(0,1)}+\|z\|^2_{L^2(0,1)}\right)+ \int_{0}^{3T/4}\left(\|F_1\|^2_{L^2(0,1)}+\|F_2\|^2_{L^2(0,1)}\right)\right]
\end{align*}
and using \eqref{eq:compaJ} and \eqref{ecsis3112} we get
\begin{multline}\label{ecsis3142}
\int_{0}^{T/2}\left(\|\sqrt{a}y_x\|^2_{L^2(0,1)}+\|\sqrt{a}z_x\|^2_{L^2(0,1)}\right)d\tau dt\leq\\
	C\left[\int_{T/4}^{3T/4} \left(\|y(t)\|^2_{L^2(0,1)}+\|z(t)\|^2_{L^2(0,1)}\right)dt+ \int_{0}^{3T/4}\left(\|F_1(t)\|^2_{L^2(0,1)}+\|F_2(t)\|^2_{L^2(0,1)}\right) dt\right].
\end{multline}

Finally, we observe that $e^{2sA}(s\lambda \zeta)^n$ and $e^{2sA}(s\lambda \sigma)^n$ are bounded in $[0, T/2]$ and $[T/4, 3T/4]$ respectively, for all $n\in \mathbb{Z}$. Hence, \eqref{ecsis3112}, \eqref{ecsis3142} and Carleman Inequality \eqref{desicarle} imply
\begin{eqnarray*}
\Gamma^1_0(y,z) &=&\int_{0}^{T/2}\int_{0}^{1} e^{2s A}((s\lambda) \zeta b^2 a (|y_x|^2 + |z_x|^2) + (s\lambda)^2 {\zeta}^2 b^2 (|y|^2 + |z|^2))dxdt\\
&\leq&C\int_{0}^{T/2}\int_{0}^{1} (  a (|y_x|^2 + |z_x|^2) +  (|y|^2 + |z|^2))dxdt\\
&\leq&C\left[\int_{T/4}^{3T/4}\int_{0}^{1} \left(|y|^2+|z|^2\right)dxdt+ \int_{0}^{3T/4}\int_{0}^{1}\left(|F_1|^2+|F_2|^2\right) dxdt\right]
\end{eqnarray*}
\vspace*{-0.6cm}
\begin{multline*}
	\leq C\left[\int_{T/4}^{3T/4}\int_{0}^{1}e^{2s \varphi} (s\lambda)^2 {\sigma}^2 b^2\left(|y|^2+|z|^2\right)dxdt\right.+
	 \left.\int_{0}^{3T/4}\int_{0}^{1}e^{2s A} (s\lambda)^4 {\zeta}^4 \left(|F_1|^2+|F_2|^2\right) dxdt\right]
\end{multline*}
and by Carleman inequality \eqref{carleman} we conclude the proof,
$$\Gamma^1_0(y,z) \leq C\left[\int_{0}^{T}\int_{0}^{1}e^{2s A}(s\lambda)^4 {\zeta}^4 \left(|F_1|^2+|F_2|^2\right) dxdt+
\int_{0}^{T}\int_{\omega}e^{2s \varphi} (s\lambda)^8 {\sigma}^8|y|^2  dxdt\right],$$
which implies the result.
\end{proof}

As a corollary we get the observability inequality:
\begin{coro} %\cite{DemarqueLimacoViana_deg_sys2020}
\label{cor:observability}
There exist positive constants $C$, $\lambda_0$ and $s_0$ such that, for any $s \geq s_0$, $\lambda \geq \lambda_0$ and any $\phi^T\in L^2(Q)$, the corresponding solution $(\phi,\psi)$ of \eqref{eq:adjoint_PDE} with $F_0 = F_1 = 0$, satisfies
\begin{equation}\label{Observability}
\|\phi(0)\|^2_{L^2(0,1)} + \|\psi(0)\|^2_{L^2(0,1)} \leq C \int_{\omega \times (0,T)} e^{2s A} s^8 \lambda^8 \zeta^8 |\phi|^2dxdt.
\end{equation}
\end{coro}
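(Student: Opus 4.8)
The plan is to deduce the observability inequality \eqref{Observability} from the global Carleman estimate of Proposition~\ref{prop:carleman} applied with $F_1=F_2=0$, combined with a classical dissipation (energy) estimate for the backward system \eqref{eq:adjoint_PDE}. Since the weight functions $e^{2sA}$, $\zeta$, $A$ entering $\Gamma_0$ do not degenerate as $t\to 0$, it suffices to bound $\|\phi(0)\|_{L^2(0,1)}^2+\|\psi(0)\|_{L^2(0,1)}^2$ by $\Gamma_0(\phi,\psi)$ and then invoke Proposition~\ref{prop:carleman} to control $\Gamma_0(\phi,\psi)$ by the observation term.

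First I would establish an energy inequality. Working with the solution furnished by the well-posedness result (so that $\phi,\psi\in C([0,T];L^2(0,1))$ and $\sqrt a\,\phi_x,\sqrt a\,\psi_x\in L^2(Q)$), I multiply the two equations of \eqref{eq:adjoint_PDE} (with $F_1=F_2=0$) by $\phi$ and $\psi$ respectively, integrate over $(0,1)$, and add. Integrating by parts in the terms $b(t)(a\phi_x)_x$ and $(d_1\sqrt a\,\phi)_x$, the boundary contributions vanish because $\phi(0,t)=\phi(1,t)=0$, $a(0)=0$ and $\sqrt{a(0)}=0$, which leaves $-\tfrac12\tfrac{d}{dt}(\|\phi\|^2+\|\psi\|^2)+b(t)\int_0^1 a(\phi_x^2+\psi_x^2)$ together with the drift and zeroth-order contributions. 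The drift terms are absorbed using the degenerate Young inequality $|d_i|\sqrt a\,|w|\,|w_x|\le\varepsilon\,a\,w_x^2+C_\varepsilon\,w^2$, and, since $d_i,b_{ij}$ are bounded and $b(t)\ge b_0$, for $\varepsilon$ small the degenerate gradient term is nonnegative and can be dropped, yielding $-\tfrac{d}{dt}\big(\|\phi\|^2+\|\psi\|^2\big)\le C\big(\|\phi\|^2+\|\psi\|^2\big)$. Consequently $t\mapsto e^{Ct}(\|\phi(t)\|^2+\|\psi(t)\|^2)$ is nondecreasing, so $\|\phi(0)\|^2+\|\psi(0)\|^2\le e^{CT}(\|\phi(t)\|^2+\|\psi(t)\|^2)$ for all $t\in[0,T]$; integrating over $t\in[T/4,3T/4]$ gives
\begin{equation*}
\|\phi(0)\|_{L^2(0,1)}^2+\|\psi(0)\|_{L^2(0,1)}^2\le C\int_{T/4}^{3T/4}\!\!\int_0^1\big(|\phi|^2+|\psi|^2\big)\,dx\,dt .
\end{equation*}

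Next I would reintroduce the Carleman weights. On the compact set $[T/4,3T/4]\times[0,1]$ the function $A$ is bounded, $\zeta=\tau\eta$ is bounded below by a positive constant and $b(t)\ge b_0$, so $e^{2sA}(s\lambda)^2\zeta^2 b^2\ge c_0>0$ there; hence the right-hand side above is $\le C\,\Gamma_0(\phi,\psi)$. Applying Proposition~\ref{prop:carleman} with $F_1=F_2=0$, which gives $\Gamma_0(\phi,\psi)\le C\int_{\omega\times(0,T)}e^{2sA}s^8\lambda^8\zeta^8|\phi|^2\,dx\,dt$, and chaining the inequalities yields precisely \eqref{Observability}.

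The main obstacle I anticipate is the rigorous justification of the energy identity in the degenerate framework: one must argue on sufficiently regular solutions (or pass to the limit from regularized problems) and check carefully that the boundary terms produced by integrating $b(t)(a\phi_x)_x$ and $(d_i\sqrt a\,w)_x$ by parts actually vanish, which uses the Dirichlet conditions together with the degeneracy $a(0)=0$. Once this is in place, the remainder is a routine absorption argument based on Young's inequality and the boundedness of the coefficients.
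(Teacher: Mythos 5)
Your argument is correct and is essentially the approach the paper intends: the paper's proof of Corollary \ref{cor:observability} simply cites Corollary 1 of \cite{DemarqueLimacoViana_deg_sys2020}, and both that reference and the paper's own estimate of $\Gamma^1_0$ in the proof of Proposition \ref{prop:carleman} use exactly your chain of reasoning (backward energy dissipation giving monotonicity of $e^{Ct}(\|\phi\|^2+\|\psi\|^2)$, integration over $[T/4,3T/4]$ where the weights are bounded below, then the global Carleman estimate with $F_1=F_2=0$). The only caveat, which is an imprecision already present in the paper's statement rather than in your proof, is that the constant produced by bounding $e^{2sA}(s\lambda)^2\zeta^2$ from below on the strip necessarily depends on $s$ and $\lambda$, so the final inequality should be understood with $s$ and $\lambda$ fixed.
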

\begin{proof}
    See Corollary 1 of \cite{DemarqueLimacoViana_deg_sys2020}. 
\end{proof}
\color{black}

In the following sections we will need weights which depend only on $t$. Let us define
\begin{equation*}
\begin{cases}
A^*(t) = \displaystyle\max_{x \in (0,1)} A(x,t), \qquad \hat A(t) = \displaystyle\min_{x \in (0,1)} A(x,t), \\
\zeta^*(t) = \displaystyle\max_{x \in (0,1)} \zeta(x,t), \qquad \hat \zeta(t) = \displaystyle\min_{x \in (0,1)} \zeta(x,t),
\end{cases}
\end{equation*}
and we observe that $A^*(t) < 0$, $\hat A(t) < 0$ and that $\zeta^*(t) / \hat \zeta(t)$ does not depend on $t$ and is equal to some constant $\zeta_0 \in \R$. Moreover, if $\lambda$ is sufficiently large we can suppose
\begin{equation}
\label{eq:comp_pesos}
3 A^*(t) < 2 \hat A(t) < 0.
\end{equation}

%Let us define 
%$$
%\hat \Gamma(\phi,\varrho,\tilde \varrho) = \int_Q e^{2s \hat A}[(s\lambda) \hat \zeta b^2 a (|\phi_x|^2+|\varrho_x|^2+|\tilde \varrho_x|^2) + (s\lambda)^2 {\hat \zeta}^2 b^2 (|\phi|^2 + |\varrho|^2 + |\tilde \varrho|^2)]dxdt.
%$$
%Thus, 
Proposition \ref{prop:carleman} and Corollary \ref{cor:observability}  imply directly the following corollary where the weights depend only on $t$.

\begin{coro} %\cite{DemarqueLimacoViana_deg_sys2020}
\label{cor:carleman_pesos_t}
There exist positive constants $C$, $\lambda_0$ and $s_0$ such that, for any $s \geq s_0$, $\lambda \geq \lambda_0$ and any $\phi^T \in L^2(Q)$, the corresponding solution $(\phi,\psi)$ of \eqref{eq:adjoint_PDE} satisfies
\begin{equation}
\label{pesos_t_Carleman for eq:adjoint_optimality_system}
\|\phi(0)\|^2_{L^2(0,1)} + \hat \Gamma(\phi,\psi) \leq C \left( \int_Q e^{2s A^*} (\zeta^*)^4 (|F_1|^2+|F_2|^2)dxdt + \int_{\omega \times (0,T)} e^{2s A^*} (\zeta^*)^8 |\phi|^2 dxdt \right).
\end{equation}
\end{coro}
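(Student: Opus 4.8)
The plan is to dominate every $x$-dependent weight by its $x$-independent majorant or minorant, so that both sides of the inequality in Proposition \ref{prop:carleman} can be replaced by the corresponding quantities built from $A^*,\hat A,\zeta^*,\hat\zeta$, and then to recover the extra term $\|\phi(0)\|^2_{L^2(0,1)}$ by a standard parabolic energy estimate. Since $A(x,t)<0$, we have $\hat A(t)\le A(x,t)\le A^*(t)$ and $\hat\zeta(t)\le\zeta(x,t)\le\zeta^*(t)$ for every $(x,t)$, whence $e^{2s\hat A(t)}\le e^{2sA(x,t)}\le e^{2sA^*(t)}$. This immediately gives $\hat\Gamma(\phi,\psi)\le\Gamma_0(\phi,\psi)$ (every power of $s\lambda\zeta$ entering $\Gamma_0$ is positive, and $a,b$ appear unchanged), while on the right-hand side $e^{2sA}\zeta^4\le e^{2sA^*}(\zeta^*)^4$ and $e^{2sA}\zeta^8\le e^{2sA^*}(\zeta^*)^8$ pointwise; since $s,\lambda$ are fixed at this stage, the powers $s^j\lambda^j$ are harmless and Proposition \ref{prop:carleman} already yields
$$\hat\Gamma(\phi,\psi)\le C\left(\int_Q e^{2sA^*}(\zeta^*)^4(|F_1|^2+|F_2|^2)\,dxdt+\int_{\omega\times(0,T)}e^{2sA^*}(\zeta^*)^8|\phi|^2\,dxdt\right).$$
It therefore only remains to add $\|\phi(0)\|^2_{L^2(0,1)}$ to the left-hand side.

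For that I would argue as in the proof of Corollary \ref{cor:observability} (and of the $\Gamma^1_0$ estimate inside Proposition \ref{prop:carleman}). Multiplying the two equations of \eqref{eq:adjoint_PDE} by $\phi$ and $\psi$, integrating over $\Omega$, integrating by parts in $b(a\phi_x)_x$, $(d_1\sqrt a\phi)_x$, etc., and using that $b,d_i,b_{ij}$ are bounded together with $b(t)\ge b_0>0$, Young's inequality absorbs the terms $\|\sqrt a\phi_x\|^2_{L^2}$, $\|\sqrt a\psi_x\|^2_{L^2}$ and produces, for a suitable $C>0$,
$$-\frac{d}{dt}\Big(e^{Ct}\big(\|\phi(t)\|^2_{L^2(0,1)}+\|\psi(t)\|^2_{L^2(0,1)}\big)\Big)\le Ce^{Ct}\big(\|F_1(t)\|^2_{L^2(0,1)}+\|F_2(t)\|^2_{L^2(0,1)}\big).$$
Integrating this from $0$ to $t'\in[T/4,T/2]$ and then averaging over $t'\in[T/4,T/2]$ gives
$$\|\phi(0)\|^2_{L^2(0,1)}+\|\psi(0)\|^2_{L^2(0,1)}\le C\left(\int_{T/4}^{T/2}\big(\|\phi\|^2_{L^2(0,1)}+\|\psi\|^2_{L^2(0,1)}\big)\,dt+\int_{0}^{T/2}\big(\|F_1\|^2_{L^2(0,1)}+\|F_2\|^2_{L^2(0,1)}\big)\,dt\right).$$

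It now suffices to bound the two integrals on the right by the right-hand side of Proposition \ref{prop:carleman}. On $[T/4,T/2]\times[0,1]$ none of the weights degenerates and $b\ge b_0$, so $e^{2sA}(s\lambda)^2\zeta^2b^2$ is bounded below by a positive constant there; hence $\int_{T/4}^{T/2}(\|\phi\|^2_{L^2}+\|\psi\|^2_{L^2})\,dt\le C\,\Gamma_0(\phi,\psi)$, which Proposition \ref{prop:carleman} controls by the desired right-hand side. Likewise, because $m(0)>0$ and $m$ is continuous and strictly positive on $[0,T/2]$, the weight $e^{2sA^*}(\zeta^*)^4$ is bounded below by a positive constant on $[0,T/2]$, so $\int_0^{T/2}(\|F_1\|^2_{L^2}+\|F_2\|^2_{L^2})\,dt\le C\int_Q e^{2sA^*}(\zeta^*)^4(|F_1|^2+|F_2|^2)$. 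Combining these and discarding $\|\psi(0)\|^2_{L^2}\ge0$ bounds $\|\phi(0)\|^2_{L^2(0,1)}$, and adding this to the bound for $\hat\Gamma(\phi,\psi)$ completes the proof. (Alternatively, one may split $(\phi,\psi)$ into the solution of the homogeneous adjoint system with the same terminal data — to which Corollary \ref{cor:observability} applies directly — plus the solution driven by $(F_1,F_2)$ with zero terminal data, whose $t=0$ value is controlled by well-posedness.) The only real obstacle is bookkeeping: one must check that on the auxiliary sub-intervals $[T/4,T/2]$ and $[0,T/2]$ every weight appearing is pinched between two positive constants, so that passing between the energy quantities $\int\|\phi\|^2_{L^2}$ and the weighted Carleman quantities costs merely a constant; this is precisely where the choice $m(0)>0$ and the non-degeneracy of $m$ away from $t=T$ enter. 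No genuinely new analytic difficulty arises beyond Proposition \ref{prop:carleman} itself.
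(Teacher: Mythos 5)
Your proof is correct and follows essentially the same route the paper intends when it asserts that the corollary follows ``directly'' from Proposition \ref{prop:carleman} and Corollary \ref{cor:observability}: the pointwise comparisons $\hat A \le A \le A^* < 0$ and $\hat\zeta \le \zeta \le \zeta^*$ give $\hat\Gamma(\phi,\psi)\le\Gamma_0(\phi,\psi)$ and dominate the right-hand side, while the backward energy estimate you carry out for $\|\phi(0)\|^2_{L^2(0,1)}$ is exactly the one already used in the paper (inequality \eqref{ecsiste3102} and the derivation of \eqref{ecsis3112}) and underlying Corollary \ref{cor:observability}. If anything your write-up is the more complete one, since Corollary \ref{cor:observability} is stated only for $F_1=F_2=0$ and your energy argument correctly carries the source terms $F_1,F_2$ through to the stated right-hand side.
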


\section{Global null controllability of the linear system}
\label{sec:global_null_controllability}

Let us define the weights: 
\begin{equation}\label{eq:weights_rhos}
   \left\{ \begin{array}{l}
 \rho_0 = e^{-sA^*}  (\zeta^*)^{-2}, \qquad   \rho_1 = e^{-sA^*}  (\zeta^*)^{-4},\\  \rho_2 = e^{-3sA^*/2}  \hat \zeta^{-1},  \qquad \hat{\rho} = e^{-sA^*} (\zeta^*)^{-3},      
    \end{array}\right.
\end{equation}
satisfying
\begin{equation}\label{eq:compara_rhos}
 \rho_{1}\leq C \hat{\rho}\leq C\rho_{0}\leq C\rho_{2}, \qquad \hat{\rho}^{2}=\rho_{1}\rho_{0} \qquad \text{and} \qquad \rho_2 \leq C \rho_1^2.   
\end{equation}

In particular, Corollary \ref{cor:carleman_pesos_t} and \eqref{eq:comp_pesos} imply 
\begin{equation}
\label{eq:carleman_simples}
%\begin{split}
\|\phi(0)\|^2_{L^2(0,1)} +
\int_Q \rho_2^{-2} (|\phi|^2 + |\psi|^2)dxdt
%\hat \Gamma(\phi,\psi^1,\psi^2) 
\leq \ C \left( \int_Q \rho_0^{-2} (|F_1|^2+|F_2|^2)dxdt + \int_{\omega \times (0,T)} \rho_1^{-2} |\phi|^2 dxdt\right).
%\end{split}
\end{equation}

In the following theorem we show the global null controllability of the linear system \eqref{eq:PDE}. In particular, since the weight $\rho_0$ blows up at $t=T$, \eqref{estimate for solution} shows that $u(\cdot,T)=0$ in $[0,1]$. The same vanishing conclusion holds for $v$.

%Furthermore, estimates \eqref{des Proposition 5} and \eqref{des Proposition 6} give additional estimates verified by the derivatives of the states $(y,p^1,p^2)$ that are needed for the local null controllability of the nonlinear system in Section \ref{control for nonlinear system}.

\begin{teo}\label{theorem case linear}
If $u_{0}\in L^{2}(0,1)$, $\rho_2 H_1$, $\rho_2 H_2 \in L^2(Q)$, then there exists a control $h\in L^{2}(\omega \times (0,T))$ with associated states $u,v \in C^{0}([0,T];L^{2}(0,1))\cap L^{2}(0,T;H^{1}_{a})$, solutions of system \eqref{eq:PDE}, such that
\begin{equation}\label{estimate for solution}
\int_Q \rho_0^2 |u|^2 dxdt + \int_Q \rho_0^2 |v|^2 dxdt + \int_{\omega \times (0,T)} \rho_1^2 |h|^2dxdt \leq
C \kappa_{0}(H_{1},H_{2},u_{0}),   
\end{equation}
where $\kappa_{0}(H_{1},H_{2},u_{0})= |\rho_2 H_1|^2_{L^2(Q)} + |\rho_2 H_2|^2_{L^2(Q)} + |u_0|^2_{L_2(0,1)}$. In particular, $u(x,T)=0$ and $v(x,T)$, for all $x\in [0,1]$. 

%Furthermore, 
%\begin{equation}\label{des Proposition 5}
%    \begin{array}{c}
%\displaystyle\sup_{[0,T]}(\hat{\rho}^{2}\|y\|^{2}_{L^{2}(0,1)}) + \displaystyle\sup_{[0,T]}(\hat{\rho}^{2}\|p^{1}\|^{2}_{L^{2}(0,1)}) + \displaystyle\sup_{[0,T]}(\hat{\rho}^{2}\|p^{2}\|^{2}_{L^{2}(0,1)})\vspace{0.1cm}\\
%+\displaystyle\int_{Q}\hat{\rho}^{2} a(x)(|y_{x}|^{2} + |p^{1}_{x}|^{2}+|p^{2}_{x}|^{2})dxdt\ \leq C \kappa_{0}(H,H_{1},H_{2},y_{0})  
%    \end{array}
%\end{equation}
%and, if $y_{0}\in H^{1}_{a}(0,1)$ 
%\begin{equation}\label{des Proposition 6}
%    \begin{array}{c}
%\displaystyle\sup_{[0,T]}(\rho_{1}^{2}\|\sqrt{a}y_{x} \|^{2}_{L^{2}(0,1)})+ \displaystyle\sup_{[0,T]}(\rho_{1}^{2}\|\sqrt{a}p^{1}_{x} \|^{2}_{L^{2}(0,1)}) + \displaystyle\sup_{[0,T]}(\rho_{1}^{2}\|\sqrt{a}p^{2}_{x} \|^{2}_{L^{2}(0,1)})\\
%+ \displaystyle\int_{Q}\rho_{1}^{2}(|y_{t}|^{2}+|p^{1}_{t}|^{2}+|p^{2}_{t}|^{2}+|(a(x)y_{x})_{x}|^{2} + |(a(x)p^{1}_{x})_{x}|^{2}+ |(a(x)p^{2}_{x})_{x}|^{2})dxdt\\
%\leq C \kappa_{1}(H,H_{1},H_{2},y_{0}),  
%    \end{array}
%\end{equation}
%where $\kappa_{1}(H,H_{1},H_{1},y_{0})= |\rho_2 H|^2_{L^2(Q)} + |\rho_2 H_1|^2_{L^2(Q)} + |\rho_2 H_2|^2_{L^2(Q)} + \|y_0\|^2_{H^{1}_{a}}$. 
\end{teo}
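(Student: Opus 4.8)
The plan is to obtain the control by the penalized duality (variational) method of Fursikov and Imanuvilov, following the proof of the analogous linear result for the autonomous coupled system in \cite{DemarqueLimacoViana_deg_sys2020}; the only structural novelty here is the time--dependent coefficient $b(t)$, and it has already been absorbed into the observability estimate of Corollary \ref{cor:carleman_pesos_t}, that is, into \eqref{eq:carleman_simples}. Write $\mathcal{L}^*=(\mathcal{L}^*_1,\mathcal{L}^*_2)$ for the adjoint operator of \eqref{eq:adjoint_PDE}, and let $P_0$ be the space of pairs $(\phi,\psi)\in C^2(\overline Q)^2$ with $\phi=\psi=0$ on $\Sigma$ and $\psi(\cdot,T)=0$, equipped with the symmetric bilinear form
\[
\mathcal{E}\big((\phi,\psi),(\tilde\phi,\tilde\psi)\big)=\int_Q \rho_0^{-2}\,\mathcal{L}^*(\phi,\psi)\cdot\mathcal{L}^*(\tilde\phi,\tilde\psi)\,dx\,dt+\int_{\omega\times(0,T)}\rho_1^{-2}\,\phi\,\tilde\phi\,dx\,dt .
\]
Estimate \eqref{eq:carleman_simples} shows that $\mathcal{E}(\cdot,\cdot)^{1/2}$ is a norm on $P_0$: if $\mathcal{E}((\phi,\psi),(\phi,\psi))=0$, then $\mathcal{L}^*(\phi,\psi)=0$ and $\phi\equiv 0$ on $\omega\times(0,T)$, whence $\phi\equiv\psi\equiv 0$ by \eqref{eq:carleman_simples}. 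Let $(\overline{P_0},\mathcal{E})$ denote the Hilbert space completion.

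On $P_0$ consider the linear form
\[
\ell(\phi,\psi)=\int_0^1 u_0(x)\,\phi(x,0)\,dx+\int_Q H_1\,\phi\,dx\,dt+\int_Q H_2\,\psi\,dx\,dt .
\]
I would first check that $\ell$ is $\mathcal{E}$--continuous. Indeed, by \eqref{eq:carleman_simples}, $\big|\int_0^1 u_0\,\phi(\cdot,0)\big|\le\|u_0\|_{L^2(0,1)}\,\|\phi(\cdot,0)\|_{L^2(0,1)}\le C\,\|u_0\|_{L^2(0,1)}\,\mathcal{E}(\cdot,\cdot)^{1/2}$, and for $\varsigma\in\{\phi,\psi\}$, $\big|\int_Q H_i\,\varsigma\big|=\big|\int_Q (\rho_2H_i)(\rho_2^{-1}\varsigma)\big|\le\|\rho_2 H_i\|_{L^2(Q)}\,\|\rho_2^{-1}\varsigma\|_{L^2(Q)}\le C\,\|\rho_2 H_i\|_{L^2(Q)}\,\mathcal{E}(\cdot,\cdot)^{1/2}$, the last step because \eqref{eq:carleman_simples} controls $\int_Q\rho_2^{-2}(|\phi|^2+|\psi|^2)$ by $\mathcal{E}((\phi,\psi),(\phi,\psi))$; the weight ordering \eqref{eq:compara_rhos} is exactly what makes these the natural norms on the data. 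Hence $\|\ell\|\le C\,\kappa_0(H_1,H_2,u_0)^{1/2}$, and the Riesz representation theorem yields a unique $(\hat\phi,\hat\psi)\in\overline{P_0}$ with $\mathcal{E}((\hat\phi,\hat\psi),(\phi,\psi))=\ell(\phi,\psi)$ for all $(\phi,\psi)\in P_0$.

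Next I would define
\[
(u,v):=\rho_0^{-2}\,\mathcal{L}^*(\hat\phi,\hat\psi),\qquad h:=-\,\rho_1^{-2}\,\hat\phi\big|_{\omega\times(0,T)},
\]
which lie in $L^2(\rho_0^2\,dx\,dt)^2$ and $L^2(\rho_1^2\,dx\,dt)$. With these substitutions the Riesz identity reads
\[
\int_Q\big(u\,\mathcal{L}^*_1(\phi,\psi)+v\,\mathcal{L}^*_2(\phi,\psi)\big)\,dx\,dt=\int_{\omega\times(0,T)}h\,\phi\,dx\,dt+\int_Q H_1\,\phi\,dx\,dt+\int_Q H_2\,\psi\,dx\,dt+\int_0^1 u_0\,\phi(\cdot,0)\,dx
\]
for all $(\phi,\psi)\in P_0$; after integrating by parts in $Q$ (using $\phi=\psi=0$ on $\Sigma$) and a density argument, this is exactly the statement that $(u,v)$ is the solution of \eqref{eq:PDE} with control $h$, source $(H_1,H_2)$ and initial datum $u(\cdot,0)=u_0$, and the well-posedness result of Appendix \ref{appendix A} upgrades it to the class $C^0([0,T];L^2(0,1))\cap L^2(0,T;H^1_a)$. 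Estimate \eqref{estimate for solution} then follows from
\[
\int_Q\rho_0^2\big(|u|^2+|v|^2\big)\,dx\,dt+\int_{\omega\times(0,T)}\rho_1^2\,|h|^2\,dx\,dt=\mathcal{E}\big((\hat\phi,\hat\psi),(\hat\phi,\hat\psi)\big)=\ell(\hat\phi,\hat\psi)\le\|\ell\|^2\le C\,\kappa_0(H_1,H_2,u_0),
\]
and, since $\rho_0(t)\to+\infty$ as $t\to T^-$ while $(u,v)\in C^0([0,T];L^2(0,1))$, the finiteness of $\int_Q\rho_0^2(|u|^2+|v|^2)$ forces $u(\cdot,T)=v(\cdot,T)=0$ on $[0,1]$.

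The main obstacle is technical rather than conceptual: one must justify rigorously the integration by parts above and the passage ``weak solution of the identity $\Rightarrow$ solution of \eqref{eq:PDE} in the weighted energy class'', since generic elements of $\overline{P_0}$ are not classical; this requires approximating $(\hat\phi,\hat\psi)$ by elements of $P_0$ and passing to the limit in the weighted norms, an argument essentially identical to the one in \cite{DemarqueLimacoViana_deg_sys2020} and unaffected by $b(t)$ because $b\in W^{1,\infty}(0,T)$ with $b\ge b_0>0$. If a nonzero initial datum $v_0\in L^2(0,1)$ is to be allowed, it suffices to add $\int_0^1 v_0\,\psi(\cdot,0)\,dx$ to $\ell$ and to use, in addition, the bound for $\|\psi(\cdot,0)\|_{L^2(0,1)}$ contained in Corollary \ref{cor:observability}.
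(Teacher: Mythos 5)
Your proposal is correct and follows essentially the same route as the paper: the same weighted bilinear form built from $\mathcal{L}^*$ and the localized $\rho_1^{-2}$ term, coercivity and continuity of the data functional both supplied by \eqref{eq:carleman_simples}, Riesz/Lax--Milgram to produce $(\hat\phi,\hat\psi)$, the same definitions of $u$, $v$, $h$, identification of $(u,v)$ as a solution by transposition, regularity from Appendix \ref{appendix A}, and the estimate \eqref{estimate for solution} from $\mathcal{E}((\hat\phi,\hat\psi),(\hat\phi,\hat\psi))=\ell(\hat\phi,\hat\psi)\le\|\ell\|^2$. The only cosmetic divergence is that you impose $\psi(\cdot,T)=0$ in $P_0$ where the paper writes $\psi(\cdot,0)=0$; your choice is the one consistent with the adjoint system displayed at the end of the paper's own proof.
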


%\textcolor{red}{Los coeficientes del sistema de optimalidad de orden uno y cero no son iguales para ambas ecuaciones, como en el caso de Miguel, para definir un único operador $L$ y $L^*$. Como salvar?}

\begin{proof}
Let us denote by
$$
L_1^*(\phi,\psi) =  -\phi_{t} - b(t)(a(x)\phi_{x})_{x}
- (d_1(x,t)\sqrt{a}\phi)_x + c_{11}(x,t)\phi + c_{21}(x,t)\psi, 
$$
$$
L_2^*(\phi,\psi) =  -\psi_{t} - b(t)(a(x)\psi_{x})_{x}
- (d_2(x,t)\sqrt{a}\psi)_x + c_{12}(x,t)\phi + c_{22}(x,t)\psi, 
$$
and
$$
\mathcal{L^*}(\phi,\phi) = (L_1^*(\phi,\psi),L_2^*(\phi,\psi)).
$$
Then, we define 
\begin{equation*}
\begin{array}{l}
    \mathcal{P}_{0} = \{ (\phi,\psi)\in C^{2}(\overline{Q})^{3}; \phi(0,t)=\phi(1,t) = 0\ \text{a.e in}\ (0,T),\ \psi(0,t)=\psi(1,t)=0\, \text{a.e in}\, (0,T),\\ \hspace{1.2cm}\psi(\cdot,0)=0\ \text{in}\ \Omega \}
\end{array}
\end{equation*}
and the application $b:\mathcal{P}_{0}\times \mathcal{P}_{0}\rightarrow \mathbb{R}$ given by
\begin{equation}
\label{eq:def_b}
%    \begin{array}{l}
%\hspace{-1.5cm}
b((\tilde{\phi},\tilde{\psi}),(\phi,\psi)) = \displaystyle\int_{Q}\rho_{0}^{-2} \mathcal{L}^*(\tilde \phi,\tilde \psi) \cdot \mathcal{L}^*(\phi,\psi) \ dx\ dt
      + \int_{\omega \times  (0,T)} \rho_{1}^{-2} \tilde{\phi}\phi\ dx\ dt,\quad \forall (\phi,\psi),(\tilde{\phi},\tilde{\psi})\in \mathcal{P}_{0},
%    \end{array}
\end{equation}
which is bilinear on $\mathcal{P}_{0}$ and defines an inner product. Indeed, taking $(\tilde{\phi},\tilde{\psi})=(\phi,\psi)$ in \eqref{eq:def_b}, we have that, by \eqref{eq:carleman_simples}  $b(\cdot,\cdot)$ is positive definite. The other properties are straightforwardly verified.

Let us consider the space $\mathcal{P}$ the completion of $\mathcal{P}_{0}$ for the norm associated to $b(\cdot,\cdot)$ (which we denote by $\|.\|_{\mathcal{P}}$). Then, $b(\cdot,\cdot)$ is symmetric, continuous and coercive bilinear form on $\mathcal{P}$.

Now, let us define the functional linear $\ell :\mathcal{P}\rightarrow\mathbb{R}$ as \begin{equation*}
    \langle\ell, (\phi,\psi)\rangle = \displaystyle\int_{0}^{1}y_{0}\phi(0)dx + \displaystyle\int_{Q}(H_1\phi + H_2\psi)dx dt.
\end{equation*}
Note that $\ell$ is a bounded linear form on $\mathcal{P}$. Indeed, applying the classical Cauchy-Schwartz inequality 
%$|u\cdot v|\leq |u||v|$, for $u,v\in 
in $\mathbb{R}^{4}$ and using \eqref{eq:carleman_simples}, we get
\begin{equation}\label{l limitado}
    \begin{array}{l}
       |\langle\ell, (\phi,\psi)\rangle|\leq  |y_{0}|_{L^{2}(0,1)}|\phi(0)|_{L^{2}(0,1)} + |\rho_{2}H_1|_{L^{2}(Q)}|\rho_{2}^{-1}\phi|_{L^{2}(Q)} + |\rho_{2}H_2|_{L^{2}(Q)}|\rho_{2}^{-1}\psi|_{L^{2}(Q)}\\
       \leq C \left(|y_{0}|^{2}_{L^{2}(0,1)} + |\rho_{2}H_{1}|^{2}_{L^{2}(Q)}  + |\rho_{2}H_{2}|^{2}_{L^{2}(Q)} \right)^{1/2}\left(b((\phi,\psi),(\phi,\psi))\right)^{1/2}\\
     \leq C(|y_{0}|^{2}_{L^{2}(0,1)} + |\rho_{2}H_{1}|^{2}_{L^{2}(Q)}  + |\rho_{2}H_{2}|^{2}_{L^{2}(Q)})^{1/2}\|(\phi,\psi)\|_{\mathcal{P}},
    \end{array}
\end{equation}
for all $(\phi,\psi) \in \mathcal{P}$. Consequently, in view of Lax-Milgram's theorem, there is only one $(\hat{\phi},\hat{\psi}) \in \mathcal{P}$ satisfying
\begin{equation}\label{eq: por Lax-M.}    b((\hat{\phi},\hat{\psi}),(\phi,\psi)) = \langle\ell, (\phi,\psi)\rangle,\quad \forall (\phi,\psi)\in\mathcal{P}.
\end{equation}

Let us set
\begin{equation}\label{definição de y, pi, h}
    \left\{\begin{array}{lll}
      u = \rho_{0}^{-2} L_1^{\ast} (\hat\phi,\hat\psi) & \text{in} & Q,\\
      v = \rho_{0}^{-2} L_2^{\ast} (\hat\phi, \hat{\psi}) & \text{in} & Q,\\
      h = -\rho_{1}^{-2}\hat{\phi}1_{O} &\text{in} & Q.
\end{array}\right.
\end{equation}

Then, replacing \eqref{definição de y, pi, h} in \eqref{eq: por Lax-M.} we have \\
%REVISAR:\\
\begin{equation*}
    \begin{array}{l}
         %\displaystyle\int_{Q}u B_1\,dx\,dt +
         \displaystyle\int_{Q}(u B_{1} + v B_{2})\,dx\,dt\\
         = \displaystyle\int_{0}^{1}y_{0}\phi(0)dx + \displaystyle\int_{\omega \times (0,T)} h \phi\,dx\,dt + \displaystyle\int_{Q}(H_1\phi + H_2\psi)dx\ dt,
    \end{array}
\end{equation*}
where $(\phi,\psi)$ is a solution of the system
\begin{equation*}
    \left\{\begin{array}{lll}
       L_1^{\ast} (\phi,\psi) = B_1   &\text{in}& Q,  \\
       L_2^{\ast} (\phi,\psi) = B_2   &\text{in}& Q, \\
       \phi(0,t)=\phi(1,t)=0 & \text{on} & (0,T), \\ \psi(0,t)=\psi(1,t)=0 & \text{on}& (0,T), \\
       \phi(\cdot,T)=0,\, \psi(\cdot,T)=0 &\text{in}& \Omega.
   \end{array}\right.
\end{equation*}

Therefore, $(u,v)$ is a solution by transposition of \eqref{eq:PDE}. Also, as $(\hat{\phi},\hat{\psi})\in\mathcal{P}$ and 
$H_{1}, H_{2}\in L^{2}(Q)$,  
using the well-posedness result of Appendix \ref{appendix A} applied to a linear equation we obtain
%Proposition \ref{Regularidade para linear system}, we obtain
$$u,v \in C^{0}([0,T];L^{2}(0,1))\cap L^{2}(0,T;H^{1}_{a}).$$
%\textcolor{red}{Esta faltando una proposition tipo la 3.1 del artículo con Joao y Suerlan. Esa proposicion tiene el label "Regularidade para linear system",por eso sale el ??. Corresponderia al Teorema \ref{teo34noauto} de este pdf.}

Moreover, from \eqref{l limitado} and \eqref{eq: por Lax-M.}
\begin{equation*}
\left(b((\hat{\phi},\hat{\psi}),(\hat{\phi},\hat{\psi}))\right)^{1/2}  \leq C \left(|y_{0}|^{2}_{L^{2}(0,1)} + |\rho_{2}H_{1}|^{2}_{L^{2}(Q)}  + |\rho_{2}H_{2}|^{2}_{L^{2}(Q)}\right)^{1/2}
\end{equation*}
that is,
\begin{equation*}
\begin{array}{l}
\displaystyle\int_{Q}\rho_{0}^{2} |u|^{2}\ dxdt + \displaystyle\displaystyle\int_{Q}\rho_{0}^{2} |v|^{2}\ dxdt + \displaystyle\int_{\omega \times (0,T)}\rho_{1}^{2} |h|^{2} dx dt\\
\leq C\left(|y_{0}|^{2}_{L^{2}(0,1)} + |\rho_{2}H_{1}|^{2}_{L^{2}(Q)}  + |\rho_{2}H_{2}|^{2}_{L^{2}(Q)} \right),
\end{array}
\end{equation*}
proving \eqref{estimate for solution}.
\end{proof}

\section{Final remarks and some problems}
\label{sec:final_remarks}

%Here we give some problems in the context addressed in this paper that, as far as we know, are open.
\begin{itemize}
	\item A natural application of our Carleman estimates are the case of controllability of equations defined on moving domains and free boundary. In fact, under regularity assumptions after a change of variables the original autonomous equation becomes non-autonomous. In the case of hierarchical control such as Stackelberg strategy with Nash equilibrium for the followers, a system of non-autonomous PDEs should be controlled and using a Carleman inequality for systems allows one to follow the computations as the autonomous case. This will be detailed in a subsequent work.
\item Consider the problem of a semilinear coupled degenerate non-autonomous parabolic equation with gradient terms. In one dimension, we consider the controllability of the following system:
%defined in the non-cilindrical domain $\widehat{Q}$ defined by a curve $\ell(t)$:
$$\begin{cases}
    u_{1t}-b(t)({a}(x)u_{1x})_{x}+F_1(u_1,u_2,u_{1x})= h\cara_{_{\mathcal{O}}}, & \ \ \ \text{in} \ \ \ Q,\\
	u_{2t}-b(t)({a}(x)u_{2x})_{x}+F_2(u_1,u_2,u_{2x})=0 & \ \ \ \text{in} \ \ \ Q,\\
	u_1(0,t)=u_1(1,t)=u_2(0,t)=u_2(1,t)=0, & \ \ \ \text{in} \ \ \ \Sigma,\\
	u_1(0)=u_1^0(x), \ u_2(0)=u_2^0(x) & \ \ \ \text{in} \ \ \ \Omega.
\end{cases}$$
After linearization, under conditions on the derivatives of $F_1$ and $F_2$, it is possible to apply our global controllability result. Then, one can apply an inverse function Theorem to get a local controllability result. In general this demands to obtain more estimates additional to \eqref{estimate for solution}.

%    \item An interesting problem is the case of a semilinear degenerate parabolic equation with gradient term in a non-cylindrical domain
%	$$	\begin{cases}
%		u_t-({a}(x')u_{x'})_{x'}+F(u,u_{x'})=\widehat{h}\cara_{{\widehat{\mathcal{O}}}}+\widehat{v}^1\cara_{{\widehat{\mathcal{O}}_1}}+\widehat{v}^2\cara_{{\widehat{\mathcal{O}}_2}}, & \ \ \ \text{in} \ \ \ \widehat{Q}\\
%		u(0,t)=u(\ell(t),t)=0, & \ \ \ \text{in} \ \ \ \widehat{\Sigma}\\
%		u(0)=u_0(x'), & \ \ \ \text{in} \ \ \ \Omega_0
%	\end{cases}$$
	\item Consider the null controllability of a quasilinear degenerate parabolic non-autonomous equation:
		$$	\begin{cases}
		u_t-b(t) (B(u){a}(x)u_{x})_{x}+F(u,u_{x})=h\cara_{{\widehat{\mathcal{O}}}}, & \ \ \ \text{in} \ \ \ Q\\
		u(0,t)=u(\ell(t),t)=0, & \ \ \ \text{in} \ \ \ \Sigma\\
		u(0)=u_0(x), & \ \ \ \text{in} \ \ \ \Omega
	\end{cases}$$
	Here $B = B(r)$ is a real function of class $C^3$ such that, for any $r\in \mathbb{R}$,
	$$0< B_0\leq B(r)\leq B_1, \ \ \ \ \ \ \ \ |B'(r)|+|B''(r)|+|B'''(r)|\leq M$$
%	\item We are interested in the controllability of the following degenerate, non-local, semi-linear parabolic equation in a non-cylindrical domain.
%	$$	\begin{cases}
%	u_t-\left({a}(x')l\left(\int\limits_{\Omega_t}u dx'\right)u_{x'}\right)_{x'}+F(u,u_{x'})=\widehat{h}\cara_{{\widehat{\mathcal{O}}}}+\widehat{v}^1\cara_{{\widehat{\mathcal{O}}_1}}+\widehat{v}^2\cara_{{\widehat{\mathcal{O}}_2}}, & \ \ \ \text{in} \ \ \ \widehat{Q}\\
%	u(0,t)=u(\ell(t),t)=0, & \ \ \ \text{in} \ \ \ \widehat{\Sigma}\\
%	u(0)=u_0(x'), & \ \ \ \text{in} \ \ \ \Omega_0
%\end{cases}$$

\item Another interesting problem is the null controllability of a fourth order  degenerate non-autonomous equation:
%in a non-cylindrical domain
$$
\begin{cases}
	u_t-b(t)\left({a}(x)u_{xx}\right)_{xx}=h\cara_{{\widehat{\mathcal{O}}}}, & \ \ \ \text{in} \ \ \ Q\\
	u(0,t)=u(\ell(t),t)=u_{xx}(0,t)=u_{xx}(\ell(t),t)=0, & \ \ \ \text{in} \ \ \ \Sigma\\
	u(0)=u_0(x'), & \ \ \ \text{in} \ \ \ \Omega
\end{cases}
$$

%\newpage
\appendix

\textbf{\section{Well-Posedness of system (\ref{eq:PDE}) }\label{appendix A}}

%Consider the optimality system (\ref{eqoptimal}) and using the change of variable $\varphi^{i}(t)=p^{i}(T-t)$, for $i=1,2.$, we obtain 
%\begin{equation*}
%	\begin{cases}
%		y_t-\frac{1}{\ell(t)^2}\left(a(x)y_x\right)_x-\frac{\ell'(t)}{\ell(t)}xy_x+F(y,\frac{1}{\ell(t)}y_x) = {h}\cara_{_{{\mathcal{O}}}}-\frac{1}{\mu_1}\varphi^1\cara_{_{\mathcal{O}_1}}-\frac{1}{\mu_2}\varphi^2\cara_{_{\mathcal{O}_2}}, & \ \ \ \text{in} \ \ \ {Q}\\
%		\varphi^i_t-\frac{1}{\ell(t)^2}\left(a(x)\varphi^i_x\right)_x+\frac{\ell'(t)}{\ell(t)}\left(x\varphi^i\right)_x + D_1F\left(y,\frac{1}{\ell(t)}y_x  \right)\varphi^i-\frac{1}{\ell(t)}\left(D_2F\left(y,\frac{1}{\ell(t)}y_x  \right)\varphi^i \right)_x  %F'(y)\varphi^i
%        =\alpha_i(y-y_{id})\cara_{_{{\mathcal{O}}_{id}}}, & \ \ \ \text{in} \ \ \ {Q}\\
%		y=0, \ \ \ \ \ \varphi^1=0, \ \ \ \ \ \varphi^2=0& \ \ \ \text{in} \ \ \ \sum\\
%		y(0)=y_0, \ \ \ \ \ \varphi^1(0)=\varphi_0^1, \ \ \ \ \ \ \varphi^2(0)=\varphi_0^2& \ \ \ \text{in} \ \ \ \Omega
%	\end{cases}
%\end{equation*}

In fact, we prove the well-posedness a system of semi-linear equations:
\begin{equation}\label{PDE_annexe}
	\left\{\begin{aligned}
		&u_t - b(t) \left(a(x) u_{x}\right)_{x} + d_1(x,t)\sqrt{a}u_x + F_1(u,v) = h 1_{\omega} + H_1 &&\text{in} && Q,\\
        &v_t - b(t) \left(a(x) v_{x}\right)_{x} + d_2(x,t)\sqrt{a}v_x + F_2(u,v) = H_2 &&\text{in}&& Q,\\
		&u(0,t)=u(1,t)=v(0,t)=v(1,t)=0&&\text{on} && (0,T), \\
		&u(\cdot,0) = u_0, \quad v(\cdot,0) = v_0 &&\text{in} && \Omega,
	\end{aligned}
	\right.
\end{equation}
%\begin{equation*}
%	\left\{\begin{aligned}
%		&y_t - b(t) \left(a(x) y_x\right)_x + c_1(x,t) \sqrt{a} y_x + F(y,c(t) \sqrt{a}y_x) = h 1_{O} - \sum_{i=1}^{2}\frac{1}{\mu_i} \varphi^i 1_{O_i} &&\text{in}&& Q, \\
%		&\varphi_t^i - b(t) \left( a(x) \varphi_x^i \right)_x + c_2(x,y,c(t)\sqrt{a} y_x,t) \sqrt{a} \varphi^i_x + g(x,y,c(t) \sqrt{a} y_x,t) \varphi^i   = \alpha_i (y-y_{i,d}) 1_{O_{i,d}}  &&\text{in}&& Q,\\
%		&y(0,t)=y(1,t)=0, \ \varphi^i(0,t) = \varphi^i(1,t) = 0 &&\text{on}&& (0,T), \\
%		& \varphi^i(\cdot,0) = \varphi^{i}_{0} &&\text{in }&& \Omega, \\
%		&y(\cdot,0) = y_0 &&\text{in}&& \Omega,
%	\end{aligned}
%	\right.
%\end{equation*}
where $d_1$, $d_2$ are bounded functions on $Q$, $0 < b_0 \leq b(t)$ bounded
%, $c(t)$ bounded   
%such that $|\frac{b_t}{b}|<C$, 
and $F_1$ and $F_2$ are globally Lipschitz in both variables.
%are $C^2$ and have bounded derivatives up to order 2. 
%In our case, $b(t) = \frac{1}{\ell(t)^2}$, $c_1 = -\frac{\ell(t)'}{\ell(t)} \frac{x}{\sqrt{a}}$, $c_2 = \frac{\ell(t)'}{\ell(t)} \frac{x}{\sqrt{a}} - c(t) D_2 F\left(y,c(t)\sqrt{a} y_x \right)$ and $g = \frac{\ell'(t)}{\ell(t)} + D_1 F \left(y,c(t) \sqrt{a} y_x \right) - c(t) \sqrt{a} \left( D_2 F\left(y, c(t) \sqrt{a} y_x \right) \right)_x$ and $\frac{x}{\sqrt{a}}$ is bounded.

Let $(w_{i})_{i}^{\infty}$ be an orthonormal basis of $H^{1}_{a}(0,1)$ such that 
\begin{equation*}
	-b(t)(a(x)w_{i,x})_{x}=\lambda_{i}w_{i}    
\end{equation*}

Fix $m\in\mathbb{N}^{*}$. Due the Caratheodory's theorem, there exist absolutely continuous functions $g_{im}=g_{im}(t)$ and $h_{im}=h_{im}(t)$ with $i\in\{1,2,...,m\}$ such that 
\begin{eqnarray*}
	t\in[0,T]\mapsto u_{m}(t)=\sum_{i=1}^{m}g_{im}(t)w_{i} \in H_{a}^{1}(0,1)
\end{eqnarray*}
and 
\begin{eqnarray*}
	t\in[0,T]\mapsto v_{m}(t)=\sum_{i=1}^{m}h_{im}(t)w_{i} \in H_{a}^{1}(0,1)
\end{eqnarray*}
satisfy
\begin{equation}
	\label{eq:galerkin_system}
	\left\{\begin{aligned}
		&(u_{m,t},w) - b(t)((a(x) u_{m,x})_x,w) + (d_1 \sqrt{a}  u_{m,x},w) + (F_1(u_m, v_{m}),w) = (h1_{\omega},w) + (H_1,w)  &&\text{in}&& Q, \\
		&(v_{m,t},\hat{w}) - b(t)((a(x) v_{m,x})_x,\hat{w}) + (d_2 \sqrt{a} v_{m,x},\hat w) + (F_2(u_m,v_m),\hat w)  = (H_2,\hat{w})   &&\text{in}&& Q,\\
		&u_{m}(0,t)=u_{m}(1,t)=0, \ v_{m}(0,t) = v_{m}(1,t) = 0 &&\text{on}&& (0,T), \\
		& u_{m}(\cdot,0)\to u_{0} &&\text{in }&& \Omega, \\
		& v_{m}(\cdot,0)\to v_{0} &&\text{in}&& \Omega,
	\end{aligned}
	\right.
\end{equation}
for any $w,\hat{w}\in [w_{1},w_{2},...,w_{m}]$  and $(\cdot,\cdot)=(\cdot,\cdot)_{L^{2}}$. Taking $w=u_{m}$ and $\hat{w}=v_{m}$, then 
\begin{equation*}
	\begin{split}
		&\frac{1}{2}\frac{d}{dt}\left(\|u_{m}\|^{2}+\|v_{m}\|^{2}\right) + b(t)\left(\|\sqrt{a}u_{m,x}\|^{2}+\|\sqrt{a}v_{m,x}\|^2\right) + (d_1 \sqrt{a} u_{m,x},u_m) + (d_2 \sqrt{a}v_{m,x},v_m)\\
        &+(F_1(u_m,v_m),u_m) + (F_2(u_m,v_m),v_m) =(h 1_\omega,u_{m})+(H_1,u_m) +(H_2,v_m).
	\end{split}
\end{equation*}

Applying the Cauchy-Schwarz inequality, using that $F_i$, $i=1,2$ are Lipschitz in both variables, that $c(t)$ is bounded and that $d_i$ are bounded functions on $Q$, we have, for some constant $C_* > 0$, 
\begin{equation*}
	\begin{split}
		&\frac{1}{2}\frac{d}{dt}\left(\|u_{m}\|^{2}+\|v_{m}\|^{2}\right) + b(t) \left(\|\sqrt{a}u_{m,x}\|^{2} + \|\sqrt{a}v_{m,x}\|^{2}\right)\\
		&\leq  C_{*}\left(\|h\|\|u_{m}\| + \|H_1\|\|u_{m}\|+\|H_2\|\|v_{m}\|  + \|u_m\|^2 + \|v_m\|^2 + \|\sqrt{a}u_{m,x}\|\|u_m\| + \|\sqrt{a}v_{m,x}\|\|v_m\| \right).
	\end{split}
\end{equation*}

Using the lower bound on $b$ and Young's inequality we obtain, for some $\epsilon>0$, 
\begin{equation*}
	%\label{systemwellposedness}
	\begin{split}
		&\frac{1}{2}\frac{d}{dt}\left(\|u_{m}\|^{2}+\|v_{m}\|^{2}\right) + b_0 \left(\|\sqrt{a}u_{m,x}\|^{2}+\|\sqrt{a}v_{m,x}\|^{2}\right)\\
		&\leq C_{*}\left(\|h\|^{2}+\|u_{m}\|^{2}+\|v_{m}\|^{2}+\sum_{i=1}^{2}\|H_i\|^{2} + \epsilon \|\sqrt{a} u_{m,x}\|^2 + \epsilon \|\sqrt{a} v_{m,x}\|^2 \right) 
	\end{split}
\end{equation*} 

Taking $\epsilon$ small and rearranging we get, for possibly another $C_*>0$, 
\begin{equation}\label{systemwellposedness}
	\begin{split}
		\frac{d}{dt}\left(\|u_{m}\|^{2}+\|v_{m}\|^{2}\right) + \left(\|\sqrt{a}u_{m,x}\|^{2}+\|\sqrt{a}v_{m,x}\|^{2}\right) \leq C_{*}\left(\|h\|^{2}+\|u_{m}\|^{2}+\|v_{m}\|^{2}+\sum_{i=1}^{2}\|H_i\|^{2} \right) 
	\end{split}
\end{equation} 

Integrating \eqref{systemwellposedness} from  to $0$ from $t$ and using Gronwall's inequality we deduce that, 
\begin{equation}
	\label{energyestimatewellpo1}
	\begin{split}
		&\|u_{m}(t)\|^{2}+\|v_{m}(t)\|^{2} + \int_{0}^{t}\|\sqrt{a}u_{m,x}\|^{2}ds + \int_{0}^{t}\|\sqrt{a}v_{m,x}\|^{2}ds\\ 
		&\leq e^{C_{*} T} \left(\|h\|^{2}_{L^{2}((0,T),L^2(\omega))}+\sum_{i=1}^{2}\|H_i\|^{2}_{L^{2}((0,T),L^2(0,1))} + \|u_{0}\|_{H^{1}_{a}(0,1)} +\|v_0\|^{2}_{H^{1}_a(0,1)}\right)
	\end{split}
\end{equation}

Since \eqref{energyestimatewellpo1} holds for any $t\in[0,T]$, 
\begin{equation}\label{energyestimate1}
	\begin{split}
		&\|u_m\|_{L^{\infty}(0,T,L^{2}(0,1))}^{2}+\|v_{m}\|_{L^{\infty}(0,T,L^{2}(0,1))}^{2}+\|\sqrt{a}u_{m,x}\|_{L^{2}(0,T,L^{2}(0,1))}^{2}+\|\sqrt{a}v_{m,x}\|_{L^{2}(0,T,L^{2}(0,1))}^{2}\\ 
		&\leq e^{C_{*} T} \left(\|h\|^{2}_{L^{2}((0,T),L^2(\omega))}+\sum_{i=1}^{2}\|H_i\|^{2}_{L^{2}((0,T),L^2(0,1))}+\|u_{0}\|_{H^{1}_{a}(0,1)}+\|v_{0}\|^{2}_{H^{1}_a(0,1)}\right) =: \mathcal{K}_1.
	\end{split}
\end{equation}

Estimate II: Taking $w=u_{m,t}$ and $\hat{w}=v_{m,t}$ in \eqref{eq:galerkin_system}  we get
\begin{equation*}
	\begin{split}
		&\|u_{m,t}\|^2 + \|v_{m,t}\|^2 + b(t) \frac{1}{2}\frac{d}{dt} \left( \|\sqrt{a}u_{m,x}\|^2 + \|\sqrt{a}v_{m,x}\|^2 \right) + (d_1 \sqrt{a} u_{m,x},u_{m,t}) + (d_2 \sqrt{a}v_{m,x},v_{m,t})\\
        &+(F_1(u_m,v_{m}),u_{m,t}) + (F_2(u_m,v_m),v_{m,t}) \leq \|h\|^2 + \sum_{i=1}^2  \|H_i\|^2 + \frac{1}{2}\|u_{m,t}\|^2 + \frac{1}{4} \|v_{m,t}\|^2.
	\end{split}
\end{equation*}

Thus, the lower-boundedness if $b(t)$, Cauchy-Schwarz inequality, the fact that $F_i$, $i=1,2$, are Lipschitz in both variables, that $d_i$, $i=1,2$, are bounded functions on $Q$ and Young's inequality, imply that there exists a constants $D>0$ and $\epsilon>0$ such that 
\begin{equation*}
	\begin{split}
		&\|u_{m,t}\|^2 + \|v_{m,t}\|^2 + \frac{d}{dt} \left( \|\sqrt{a}u_{m,x}\|^2 + \|\sqrt{a}v_{m,x}\|^2 \right) \\
		&\leq D \left[ \|h\|^2 + \sum_{i=1}^2  \|H_i\|^2 + \|\sqrt{a}u_{m,x}\|^2 + \|\sqrt{a}v_{m,x}\|^2 + \|u_m\|^2 + \|v_m\|^2 \right].  
	\end{split}
\end{equation*}

%Thus, rearranging,
%\begin{equation*}
%	\begin{split}
%		&\|y_{m,t}\|^2 + \sum_{i=1}^2  \|\varphi^i_{m,t}\|^2 + \frac{d}{dt} \left( \|\sqrt{a}y_{m,x}\|^2 + \sum_{i=1}^2  \|\sqrt{a}\varphi^i_{m,x}\|^2 \right) \\
%		&\leq D \left[ \|h\|^2 + \sum_{i=1}^2  \|y_{i,d}\|^2 + \|y_m\|^2 + \sum_{i=1}^2  \|\varphi^i_m\|^2 + \|\sqrt{a}y_{m,x}\|^2 + \sum_{i=1}^2 \|\sqrt{a}\varphi^i_{m,x}\|^2 \right].  
%	\end{split}
%\end{equation*}

Integrating in $t$ on $[0,t]$  and applying Gronwall's inequality 
we get
\begin{equation*}
	\begin{split}
		&\|u_{m,t}\|^2_{L^2(0,T,L^2(0,1))} +  \|v_{m,t}\|^2_{L^2(0,T,L^2(0,1))} + \left( \|\sqrt{a}y_{m,x}(t)\|^2 + \|\sqrt{a}v_{m,x}(t)\|^2 \right) \\
		&\leq e^{DT} \left[ \|\sqrt{a}u_{m,x}(0)\|^2 + \|\sqrt{a}v_{m,x}(0)\|^2 +\|h\|^2_{L^2(0,T,L^2(\omega))} + \sum_{i=1}^2  \|H_i\|^2_{L^2(0,T,L^2(0,1))} + \|u_m\|^2_{L^\infty(0,T,L^2(0,1))}T   \right. \\
		&\left. +  \|v_m\|^2_{L^\infty(0,T,L^2(0,1))}T   \right].
		%\\
		%&\leq e^{DT} \left[ \|\sqrt{a}y_{m,x}(0)\|^2 + \sum_{i=1}^2  \|\sqrt{a}\varphi^i_{m,x}(0)\|^2 +\|h\|^2_{L^2(0,T,L^2(\omega))} + \sum_{i=1}^2  \|y_{i,d}\|^2_{L^2(0,T,L^2(O_i))} + \|y_m\|^2_{L^\infty(0,T,L^2(0,1))}T \right. \\
		%&\left. + \sum_{i=1}^2  \|\varphi^i_m\|^2_{L^\infty(0,T,L^2(0,1))}T \right] e^{D \int_0^t \left(  \|\sqrt{a}y_{m,x}\|^2 + \sum_{i=1}^2  \|\sqrt{a}\varphi^i_{m,x}\|^2 \right) ds}
	\end{split}
\end{equation*}

Finally, using estimate I,
\begin{equation*}
	\begin{split}
		&\|u_{m,t}\|^2_{L^2(0,T,L^2(0,1))} +  \|v_{m,t}\|^2_{L^2(0,T,L^2(0,1))} + \|\sqrt{a}u_{m,x}\|^2_{L^\infty(0,T,L^2(0,1))} + \|\sqrt{a}v_{m,x}\|^2_{L^\infty(0,T,L^2(0,1))} \\
		&\leq e^{DT} \left[ \|\sqrt{a}u_{m,x}(0)\|^2_{L^2(0,1)} + \|\sqrt{a}v_{m,x}(0)\|^2_{L^2(0,1)} +\|h\|^2_{L^2(0,T,L^2(\omega))}  
		%\right. \\
		%&\left. 
		+ \sum_{i=1}^2  \|H_i\|^2_{L^2(0,T,L^2(0,1))} + 2\mathcal{K}_1 T  \right] =:\mathcal{K}_2.
	\end{split}
\end{equation*}

Estimate III: Taking $w=-(a(x)u_{m,x})_x$ and $\hat{w}=-(a(x) v_{m,x})_x$ in \eqref{eq:galerkin_system}, and proceeding as in the above estimates, we get
\begin{equation*}
	\begin{split}
		&-(u_{m,t}, (a u_{m,x})_x) - (v_{m,t}, (a v_{m,x})_x) + b(t) \left(\| (a u_{m,x})_x \|^2 + \| (a v_{m,x})_x \|^2 \right) \\
		&- (d_1 \sqrt{a} u_{m,x},(a(x)u_{m,x})_x) - (d_2 \sqrt{a}v_{m,x},(a(x) v_{m,x})_x) \\
		& - \sum_{i=1}^2 (F_i(u_m, v_{m}),(a(x)u_{m,x})_x)   \leq C_\epsilon \|h\|^2 + C_\epsilon\sum_{i=1}^2  \|H_i\|^2 + \epsilon\|(a u_{m,x})_x\|^2  + \epsilon \|(a v_{m,x})_x\|^2.
	\end{split}
\end{equation*}

Thus, rearranging, using the lower-boundedness if $b(t)$, Cauchy-Schwarz inequality, the fact that $F_i$, $i=1,2$, are Lipschitz in both variables, that $d_i$ are bounded functions on $Q$ and Young's inequality, we get that there exists a constants $D>0$ such that 
\begin{equation*}
	\begin{split}
		&\frac{d}{dt}\left( \|\sqrt{a} u_{m,x} \|^2 + \|\sqrt{a}v_{m,x}\|^2\right) + \|(a u_{m,x})_x\|^2 + \|(a v_{m,x})_x\|^2 \\
		&\leq D \left( \|h\|^2 + \sum_{i=1}^2  \|H_{i}\|^2 + \|u_m\|^2 + \|v_m\|^2
		+ \|\sqrt{a} u_{m,x} \|^2 + \|\sqrt{a} v_{m,x}\|^2 
		%+ \sum_{i=1}^2 \|\sqrt{a}y_{m,x}\|^2\|\sqrt{a}\varphi^i_{m,x}\|^2
		\right).
	\end{split}
\end{equation*}

Integrating in $t$ on $[0,t]$, using Gronwall's inequality and proceeding as in estimate II, we have that
\begin{equation*}
	\begin{split}
		\|\sqrt{a} u_{m,x}\|^2_{L^\infty(0,T,L^2(0,1))}  + \|\sqrt{a}v_{m,x}\|^2_{L^\infty(0,T,L^2(0,1))} +  \|(a u_{m,x})_x\|^2_{L^2(0,T,L^2(0,1))} + \|(a v_{m,x})_x\|^2_{L^2(0,T,L^2(0,1))} \\
		\leq e^{DT} \left[ \|\sqrt{a} u_{m,x}(0) \|^2 + \|\sqrt{a} v_{m,x}(0)\|^2 + \|h\|^2_{L^2(0,t,L^2(\omega))} + \sum_{i=1}^2  \|H_{i}\|^2_{L^2(0,t,L^2(0,1)} + 4\mathcal{K}_1 \right] =: \mathcal{K}_3.
	\end{split}
\end{equation*}

Since $\mathcal{K}_1$, $\mathcal{K}_2$ and $\mathcal{K}_3$ do not depend on $m$, the three estimates above imply that the sequences $(u_m)$ and $(v_m)$ are bounded in 
$$
L^2(0,T,L^2(0,1)) \cap H^1(0,T,H^2_a(0,1)).
$$

Therefore, there exist subsequences $(u_{m_j})$, $(v_{m_j})$ such that
$$
u_{m_j} \rightharpoonup u,\qquad v_{m_j} \rightharpoonup v, \qquad \text{as } j\to \infty, 
$$
weakly in $L^2(0,T,L^2(0,1)) \cap H^1(0,T,H^2_a(0,1))$.
In fact, since the immersion $H^2_a(0,1)$ in $H^1_a(0,1)$ is compact, by the theorem of Aubin-Lions we get that
$$
u_{m_j} \to u,\qquad v_{m_j} \to v, \qquad \text{as } j\to \infty, 
$$
strongly in $H^1_a(0,1)$. Using the continuity of $F_i$,
at least for a subsequence, we can pass to the limit in $m$ in  all the terms of the the approximate system \eqref{eq:galerkin_system}.

The uniqueness is proved by standard methods for nonlinear systems, taking into consideration the bounds established in estimates I to III for $u$, $u_x$ and $v$ and $v_x$, for $i=1,2$.\\

%\section{Carleman's inequality}\label{appendix B}

\end{itemize}
\bibliographystyle{abbrv}
\bibliography{referencias}

\begin{comment}

\end{comment}

\end{document}